\pgfplotsset{compat=1.15}
\newtheorem{thm}{Theorem}[section]
\newtheorem{lma}[thm]{Lemma}
\newtheorem{cor}[thm]{Corollary}
\newtheorem{prop}[thm]{Proposition}
\newtheorem{claim}[thm]{Claim}
\theoremstyle{definition}
\newtheorem{rem}[thm]{Remark}
\newtheorem{ques}[thm]{Question}
\numberwithin{equation}{section}
\newcommand{\R}{\mathbb{R}}
\newcommand{\N}{\mathbb{N}}
\newcommand{\p}{\mathbf{p}}
\renewcommand{\q}{\mathbf{q}}
\providecommand{\norm}[1]{\lVert#1\rVert}
\newcommand{\E}{\mathbb{E}}
\newcommand{\I}{\mathcal{I}}
\renewcommand{\i}{\ii}
\newcommand{\ii}{\mathbf{i}}
\newcommand{\jj}{\mathbf{j}}
\newcommand{\ih}{\hat\imath}
\newcommand{\jh}{\hat\jmath}
\newcommand{\iiv}{\overline{\imath}}
\newcommand{\jjv}{\overline{\jmath}}
\newcommand{\rmu}{\overleftarrow{\mu}}
\newcommand{\rnu}{\overleftarrow{\nu}}
\definecolor{qqqqff}{rgb}{0,0,1}
\definecolor{wwwwww}{rgb}{0.4,0.4,0.4}
\definecolor{ccqqqq}{rgb}{0.8,0,0}
\definecolor{ffffff}{rgb}{1,1,1}
\title{On the convergence rate of the chaos game}
\author{Bal\'azs B\'ar\'any}
\address{Bal\'azs B\'ar\'any; Budapest University of Technology and Economics, MTA-BME Stochastics Research Group, P.O. Box 91, 1521 Budapest, Hungary} \email{balubs@math.bme.hu}
\author{Natalia Jurga} \address{Natalia Jurga; University of St Andrews,  School of Mathematics and Statistics, \newline St Andrews, KY16 9SS, Scotland}
\email{naj1@st-andrews.ac.uk}
\author{Istv\'an Kolossv\'ary}
\address{Istv\'an Kolossv\'ary; University of St Andrews,  School of Mathematics and Statistics, \newline St Andrews, KY16 9SS, Scotland} \email{itk1@st-andrews.ac.uk}
\subjclass[2010]{Primary 28A80 Secondary 37A50 37C35}
\keywords{chaos game, Minkowski dimension of measures, cover time, iterated function system, Bedford--McMullen carpet}
\thanks{Bal\'azs B\'ar\'any acknowledges support from grants OTKA K123782 and OTKA FK134251. 759/1). Natalia Jurga was supported by an EPSRC Standard Grant (EP/R015104/1). Istv\'an Kolossv\'ary was supported by a \emph{Leverhulme Trust Research Project Grant} (RPG-2019-034).}
\begin{document}

\begin{abstract}
This paper studies how long it takes the orbit of the chaos game to reach a certain density inside the attractor of a strictly contracting iterated function system of which we only assume that its lower dimension is positive. We show that the rate of growth of this cover time is determined by the Minkowski dimension of the push-forward of the shift invariant measure with exponential decay of correlations driving the chaos game. Moreover, we bound the expected value of the cover time from above and below with multiplicative logarithmic correction terms. As an application, for Bedford-McMullen carpets we completely characterise the family of probability vectors which minimise the Minkowski dimension of Bernoulli measures. Interestingly, these vectors have not appeared in any other aspect of Bedford-McMullen carpets before.
\end{abstract}

 \maketitle

\thispagestyle{empty}
%\noindent \textbf{Acknowledgements.}  \emph{EPSRC} (Standard Grant EP/R015104/1).

%%%%%%%%%%%%%%%%%%%%%%%%%%%%%%%%%%%%%%%%%%%%%%%%%%%%%%%%%%%%%%%%%%%%%%%%%%%%%%%%
%%%%%%%%%%%%%%%%%%%%%%%%%%%%%%%%%%%%%%%%%%%%%%%%%%%%%%%%%%%%%%%%%%%%%%%%%%%%%%%%
\section{Introduction}

The \emph{chaos game} is a simple random iterative procedure introduced by Barnsley~\cite{BarnsleyBook} to generate the  attractor of an \emph{iterated function system} (IFS) which is a tuple $\mathcal{F}=\{f_1,f_2,\ldots,f_N\}$  of contracting transformations on $\R^d$. Given a fixed non-degenerate probability vector $\p=(p_1,\ldots,p_N)$, the game starts from an initial point $x_0$, chooses a random function $f_{i_1}$ from $\mathcal{F}$ according to $\p$ and returns the point $x_1=f_{i_1}(x_0)$. It continues iteratively generating a sequence of points $x_2,x_3,\ldots$, where $x_n=f_{i_n}(x_{n-1})$ and the indices $i_n$ are independently and identically distributed (i.i.d.) according to~$\p$. The \emph{attractor} $\Lambda\subseteq\R^d$, which is the unique non-empty, compact set satisfying $\Lambda = \bigcup_{i=1}^Nf_i(\Lambda)$ is then almost surely (regardless of the starting point) obtained as the $\omega$-limit set of the orbit $(x_n)_{n=0}^{\infty}$, i.e. $\Lambda=\bigcap_{m=1}^\infty\overline{\{ x_n:\, n\geq m \}}$. The almost sure property is with respect to the Bernoulli measure $\mu_{\p}=\p^\N$ defined on infinite sequences $\ii$ from the symbolic space $\Sigma=\{1,2,\ldots,N\}^{\N}$. In particular, if $x_0\in \Lambda$ (for example, $x_0$ is a fixed point of one of the $f_i$) then $(x_n)_{n=0}^{\infty}$ is almost surely a dense subset of $\Lambda$. For the remainder we will always assume that $x_0\in \Lambda$.

The chaos game can be naturally generalised to allow the indices $i_n$ to be chosen in a non- i.i.d. way. The left shift $\sigma:\, \Sigma\to\Sigma$ is defined $\sigma(i_1i_2i_3\ldots)=i_2i_3\ldots$ and a measure $\mu$ on the symbolic space $\Sigma$ is $\sigma$-invariant if $\mu(\sigma^{-1}(\cdot))=\mu(\cdot)$. Then we also consider the chaos game with respect to a $\sigma$-invariant measure $\mu$, where for every $n \in \N$ and $i_1, \ldots, i_n \in \{1, \ldots, N\}^n$, the first $n$ indices in the chaos game are chosen to be $i_1, \ldots, i_n$ with probability $\mu([i_1 \ldots i_n])$, where $[i_1 \ldots i_n]$ denotes the cylinder set $\{\jj\in\Sigma:\, j_l=i_l \text{ for }l=1,\ldots,n\}$.

The natural question we address in this paper is: given $\mu$, how long does it take the chaos game to reach a certain `resolution' of the fractal? Moreover, for which $\mu$ will it take the least number of steps to reach that resolution? We formulate these questions rigorously in a moment. We are only aware of \cite{GutierrezEtal_Fractals96} and \cite{JurgaMorris_ChaosGame_2020arxiv} that study this direction. The former is an informal investigation, while the latter work of Morris and the second named author answer these questions for self-similar sets satisfying the open set condition in the i.i.d case. Our results greatly extend this scope and give additional context explaining and illustrating the results. A number of papers show convergence of the chaos game for more general IFSs, see~\cites{BARNSLEYetal_JMAA2016,BarnsleyLesniak_2014,barnsley_vince_2011,Lesniak_chaos2015} for example, while others show ergodic theorems for time averages along the orbit~\cites{elton_1987,forte_mendivil_1998,Werner_2004}. We do not pursue these directions further here.

The question addressed in this article is a type of ``covering problem'', which are problems with a rich history in the probability literature, originating with the classical coupon collector problem. Covering problems are concerned with the time taken for a stochastic process to ``exhaust'' its state space, in a precise sense which depends on the model being studied. Classical examples include the first time that a random walk on a finite path-connected graph visits all of the vertices \cite{aldous-graph} and the first time that an irreducible finite-state Markov chain visits all of its states \cite{peres, matthews}. The geometric covering problems studied in \cite{aldous, penrose} are closely related to our problem, except that while our sequence of points $x_1,x_2,x_3 \ldots$ is determined in a dynamical way through the chaos game, the points belonging to the sequences in \cite{aldous, penrose} are chosen at random in an i.i.d. way.  Another closely related field is the study of recurrence in chaotic dynamical systems through hitting time statistics \cite{mike-book, haydn}. This is concerned with the study of the hitting time of an orbit under a chaotic dynamical system, to a ball which is shrinking down to a point in the state space.

\subsection*{Setup} Throughout, we assume that the IFS $\mathcal{F}$ is strictly contracting in the sense that there exists $0<a<1$ such that
\begin{equation}\label{eq:condition}
\|f_i(x)-f_i(y))\|\leq a\|x-y\|\text{ for every }i=1,\ldots,N.
\end{equation}
A priori we do not assume any separation condition on the first level cylinder sets $f_i(\Lambda)$ nor any further smoothness conditions on the maps $f_i$.
%\begin{equation}\label{eq:invmeasure}\nu_{\p}(\cdot) = \sum_{i=1}^N p_i \nu_{\p}(f_i^{-1}(\cdot)). \end{equation}

The \emph{orbit} of the starting point $x_0\in\Lambda$ according to $\ii\in\Sigma$ for the first $n$ steps is
\begin{equation*}
\mathcal{O}_n(\ii,x_0) =\big\{x_0, f_{i_1}(x_0), f_{i_2}\circ f_{i_1}(x_0),\ldots,f_{i_n}\circ\ldots\circ f_{i_2}\circ f_{i_1}(x_0)\big\}.
\end{equation*}
For compositions of maps we use the standard notation $f_{\ii|n}:=f_{i_1}\circ f_{i_2}\circ\ldots\circ f_{i_n}$, where $\ii|n = i_1i_2\ldots i_n$. Compositions are taken in reverse order in $\mathcal{O}_n(\ii,x_0)$, for this we introduce the notation $\overleftarrow{\ii|n}:=i_ni_{n-1}\ldots i_1$ and so $f_{\overleftarrow{\ii|n}} = f_{i_n}\circ f_{i_{n-1}}\circ\ldots\circ f_{i_1}$. We measure the `resolution' of $\mathcal{O}_n(\ii,x_0)$ as a subset of $\Lambda$ according to the Hausdorff distance $d_{\mathrm{H}}$ between them. In particular, for any $r>0$
\begin{equation*}
d_{\mathrm H} \big(\mathcal{O}_n(\ii,x_0),\Lambda\big) < r \;\Longleftrightarrow\; (\forall x\in\Lambda)\, (\exists y\in\mathcal{O}_n(\ii,x_0)) \text{ such that } x\in B(y,r).
\end{equation*}
In this case, we also say that $\mathcal{O}_n(\ii,x_0)$ is $r$-\emph{dense} in $\Lambda$. To measure the first instance when $\mathcal{O}_n(\ii,x_0)$ becomes $r$-dense in $\Lambda$, we introduce the waiting time
\begin{equation*}
T_r(\ii,x_0) := \inf \{n\geq 0:\, d_{\mathrm H} \big(\mathcal{O}_n(\ii,x_0),\Lambda\big) < r\}.
\end{equation*}

Let $\mu$ be a left-shift invariant measure. We say that $\mu$ has {\it one-sided exponential decay} if there exists $\varepsilon>0$ and $\kappa>0$ such that for any finite words $\iiv,\jjv\in\Sigma^*$
\begin{equation}\label{eq:decay}
\mu([\iiv]\cap\sigma^{-n}[\jjv])\leq (1+\kappa2^{-\varepsilon(n-|\iiv|)})\mu([\iiv])\mu([\jjv]).
\end{equation}
Such a measure $\mu$ is necessarily strongly mixing. To see this, note that \eqref{eq:decay} immediately implies that for all finite words $\iiv,\jjv\in\Sigma^*$, $\mu$ satisfies the one-sided strong mixing property $\limsup_{n \to \infty} \mu([\iiv]\cap\sigma^{-n}[\jjv]) \leq \mu([\iiv])\mu([\jjv])$. The other side can then be obtained by applying the one-sided strong mixing property to $\mu((\Sigma \setminus [\iiv])\cap\sigma^{-n}[\jjv])).$ It is easy to see that Bernoulli measures satisfy the one-sided exponential decay property \eqref{eq:decay}, but it is also satisfied by many other measures, such as Gibbs measures for H\"older continuous potentials on $\Sigma$ and certain K\"aenm\"aki measures, both of which are important in the analysis of fractals and motivate the study of the chaos game beyond the i.i.d. case. A more detailed discussion of the applicability of our results to these classes of measures can be found in Section \ref{measures}.
%A considerably more general class of measures which satisfies \eqref{eq:decay} is the class of Gibbs measures for H\"older continuous potentials on $\Sigma$, see \cite[p. 15]{bowen}.

We define the {\it reversed measure of a $\sigma$-invariant measure $\mu$}, denoted by $\overleftarrow{\mu}$, as
$$
\rmu([\iiv])=\mu([\overleftarrow{\iiv}])\text{ for every }\iiv\in\Sigma^*.
$$
Since $\mu$ is $\sigma$-invariant it follows that $\rmu$ is a well defined $\sigma$-invariant measure on $\Sigma$. We let $\rnu:=\pi_*\rmu$  where $\pi_*\rmu$ is the pushforward measure of $\rmu$ through the natural projection $\pi:\Sigma\to\Lambda$ defined by
\begin{equation}\label{eq:natproj}
\pi(\ii):=\lim_{n\to \infty} f_{\ii|n} (0).
\end{equation}

\subsection*{Main contribution}
Informally, our main result states that if $\Lambda$ has positive lower dimension $\dim_{\mathrm{L}}\Lambda>0$, see~\eqref{eq:dimL} for a definition,  and $\mu$ is an invariant measure with one-sided exponential decay then $T_r(\ii,x_0)$ asymptotically scales as $r^{-\alpha(1+o(1))}$ as $r\to 0$, where the exponent $\alpha$ is given by the Minkowski dimension of $\rnu$ as defined by Falconer, Fraser and K\"{a}enm\"{a}ki in~\cite{FFF_MinkowskiDimMeas_2020arxiv}, see Section~\ref{def:1} below. We obtain more precise bounds for the expected value of $T_r(\ii,x_0)$ with respect to $\mu$ by showing that
\begin{equation*}
(r(\log(1/r))^{2/\dim_{\mathrm{L}}\Lambda})^{-\alpha+o(1)} \leq \E_\mu T_r(x_0) \leq \left(\log(1/r)\right)^2 r^{-\alpha-o(1)}.
\end{equation*}
Moreover, we give a complete characterisation of the probability vectors $\p$ which minimise the value of $\alpha$ for the pushforward measure $\nu_{\mathbf{p}}$ in the case of Bedford--McMullen carpets. See Section~\ref{sec:results} for the precise formulation of these statements.

%%%%%%%%%%%%%%%%%%%%%%%%%%%%%%%%%%%%%%%%%%%%%%%%%%%%%%%%%%%%%%%%%%%%%%%%%%%
\subsection{Minkowski dimension of measures}\label{def:1}
	Let $\nu$ be a fully supported finite Borel measure on a compact metric space $X$. Then the \emph{upper Minkowski dimension} of $\nu$ is defined as
	\begin{align*}
	\overline{\dim}_{\mathrm{M}}(\nu):=\inf \{s \geq 0: & \text { there exists a constant } c>0 \text { such that }\\
	&\;\nu(B(x, r)) \geq c r^{s} \text { for all } x \in X \text { and } 0<r<1\}
	\end{align*}
	and its \emph{lower Minkowski dimension} is
	\begin{align*}
	\underline{\dim}_{\mathrm{M}}(\nu):=\inf \{s \geq 0: & \text { there exist a constant } c>0 \text { and a sequence }\left(r_{n}\right)_{n \in \mathbb{N}}\\
	&\text { of positive real numbers such that } \lim _{n \rightarrow \infty} r_{n}=0 \text { and }\\
	&\;\nu\left(B\left(x, r_{n}\right)\right) \geq c r_{n}^{s} \text { for all } x \in X \text { and } n \in \mathbb{N} \}
	\end{align*}
(here $B(x,r)$ denotes an open ball). 	If the two values coincide, then the common value, called the \emph{Minkowski dimension} of $\nu$, is denoted by $\dim_{\mathrm{M}}\nu$. It is also referred to as the box dimension of $\nu$, see \cite[Chapter 4.2]{fraser_2020Book}.

The upper and lower Minkowski dimension of a set $X$ is denoted by $\overline{\dim}_{\mathrm{M}}X$ and $\underline{\dim}_{\mathrm{M}}X$, respectively. The main result of~\cite{FFF_MinkowskiDimMeas_2020arxiv} is to give an alternative characterization of these quantities by showing that
\begin{equation}\label{eq:100}
\overline{\dim}_{\mathrm{M}}X=\min \left\{\overline{\dim}_{\mathrm{M}}(\nu): \nu \text { is a fully supported finite Borel measure on } X\right\},
\end{equation}
and an analogous claim holds for $\underline{\dim}_{\mathrm{M}}X$. In general, for the attractor of an IFS, the measure that achieves the minimum is not invariant. However, an interpretation of the result in~\cite{JurgaMorris_ChaosGame_2020arxiv} is that for a self-similar set $\Lambda$ satisfying the open set condition, $\dim_{\mathrm{M}}\Lambda$ is achieved by a self-similar measure $\nu_{\p}$ if and only if $\p$ is the so-called `natural measure' associated to the IFS generating $\Lambda$. On the other hand, in the case of a family of self-affine sets called \emph{Bedford--McMullen carpets} there is indeed a positive gap $\dim_{\mathrm{M}}\Lambda < \min_{\mathbf{p}} \dim_{\mathrm{M}} \nu_{\p}$ unless the Hausdorff and Minkowski dimensions of $\Lambda$ coincide, see Remark~\ref{rem:1}. A key contribution of the current paper is to identify the family of vectors $\p$ which minimise $\dim_{\mathrm{M}} \nu_{\p}$. Hence, these are the vectors for which the chaos game reaches a certain `resolution' in the least number of steps.

Here we give a simple, yet useful equivalent characterization of $\dim_{\mathrm{M}}\nu$. Let
\begin{equation*}
\underline{\alpha}(\nu):= \liminf_{r\to 0} \max_{y\in\Lambda} \frac{\log \nu(B(y,r))}{\log r} \;\text{ and }\; \overline{\alpha}(\nu):= \limsup_{r\to 0} \max_{y\in\Lambda} \frac{\log \nu(B(y,r))}{\log r}.
\end{equation*}

We note that $\min_{y \in \Lambda} \nu(B(y,r))$ exists for each $r$ by compactness of $\Lambda$ and lower semi-continuity of $y \mapsto \nu(B(y,r))$, thus the above two definitions make sense.

\begin{lma}\label{lem:equal}
	Let $\nu$ be a compactly supported finite Borel measure. Then
	$$
	\underline{\alpha}(\nu)=\underline{\dim}_{\mathrm{M}}(\nu) \quad\text{ and }\quad	\overline{\alpha}(\nu)=\overline{\dim}_{\mathrm{M}}(\nu).
	$$
\end{lma}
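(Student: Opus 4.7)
My plan is to reduce both definitions to the single quantity $m(r):=\min_{y\in\Lambda}\nu(B(y,r))$ and then verify the equivalence by elementary manipulations with logarithms. First I would note that $\Lambda$ is compact and $y\mapsto\nu(B(y,r))$ is lower semi-continuous, so the minimum $m(r)$ is attained, and since $\nu$ is fully supported each $\nu(B(y,r))$ is strictly positive, hence $m(r)>0$ for every $r>0$. Because $\log r<0$ on $(0,1)$, the maximum of a quotient with negative denominator is the quotient at the minimiser of the numerator, so
\[
\max_{y\in\Lambda}\frac{\log\nu(B(y,r))}{\log r}=\frac{\log m(r)}{\log r},
\]
whence $\overline{\alpha}(\nu)=\limsup_{r\to 0}\frac{\log m(r)}{\log r}$ and analogously for $\underline{\alpha}(\nu)$.

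The heart of the proof is the chain of equivalences
\[
\bigl(\nu(B(x,r))\geq c\,r^{s}\ \forall x\in\Lambda\bigr)\;\Longleftrightarrow\;m(r)\geq c\,r^{s}\;\Longleftrightarrow\;\frac{\log m(r)}{\log r}\leq s+\frac{\log c}{\log r},
\]
again using $\log r<0$ in the last step. For the upper Minkowski dimension, if $s>\overline{\dim}_{\mathrm{M}}(\nu)$ then the middle inequality holds with some $c>0$ for every $r\in(0,1)$; passing to $\limsup_{r\to 0}$ kills the $\log c/\log r$ term and yields $\overline{\alpha}(\nu)\leq s$. Conversely, if $s>\overline{\alpha}(\nu)$, there exists $r_{0}\in(0,1)$ with $m(r)\geq r^{s}$ for all $r\in(0,r_{0})$; on $[r_{0},1)$ the monotonicity $r\mapsto m(r)$ gives $m(r)\geq m(r_{0})>0$, so $c:=\min\{1,m(r_{0})\}$ witnesses $m(r)\geq c\,r^{s}$ throughout $(0,1)$ and hence $\overline{\dim}_{\mathrm{M}}(\nu)\leq s$. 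Letting $s$ decrease to the respective right-hand side gives equality.

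The argument for the lower case is formally identical: replace ``for all $r\in(0,1)$'' by ``along some sequence $r_{n}\to 0$'' and $\limsup$ by $\liminf$; one direction extracts a witnessing subsequence from the $\liminf$ and the other plugs the guaranteed sequence back into the $\liminf$. I do not foresee any genuine obstacle beyond ensuring $m(r)>0$ on the compact support and carefully tracking the sign of $\log r$; the remainder is a routine bookkeeping exercise with definitions.
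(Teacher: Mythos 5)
Your proof is correct and follows essentially the same route as the paper's: both arguments directly unpack the two definitions, exploiting the negativity of $\log r$ to translate the bound $\nu(B(x,r))\geq c\,r^{s}$ into $\frac{\log m(r)}{\log r}\leq s+\frac{\log c}{\log r}$ and then disposing of the $\log c/\log r$ correction in the limit. The only cosmetic difference is that for the inequality $\overline{\dim}_{\mathrm{M}}(\nu)\leq\overline{\alpha}(\nu)$ you construct the witnessing constant $c=\min\{1,m(r_{0})\}$ directly, using monotonicity of $m$ on $[r_{0},1)$, whereas the paper argues from $s<\overline{\dim}_{\mathrm{M}}(\nu)$ toward $\overline{\alpha}(\nu)\geq s$; both yield the same conclusion.
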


\begin{proof}The proof follows from the definition of the upper and lower Minkowski dimensions. Let $X$ be the support of $\nu$. Let $s>\overline{\dim}_{\mathrm{M}}(\nu)$ be arbitrary. Then for every $x\in X$, $\tfrac{\log\nu(B(x,r))}{\log r}\leq s+\tfrac{\log c}{\log r}$ and so, $\overline{\alpha}(\nu)\leq s$. Since $s$ was arbitrary, we have $\overline{\alpha}(\nu)\leq\overline{\dim}_{\mathrm{M}}(\nu)$.
	
Now, let $s<\overline{\dim}_{\mathrm{M}}(\nu)$. Then by definition for every $c>0$ there exists $x_c\in X$ and $0<r_c<1$ such that $\nu(B(x_c,r_c))<c(r_c)^s$. Hence,
	$$
	\max_{y\in X}\frac{\log\nu(B(y,r_c))}{\log r_c}\geq\frac{\log\nu(B(x_c,r_c))}{\log r_c}>s+\frac{\log c}{\log r_c}.
	$$
Letting $c\to0$, necessarily we have that $r_c\to0$, and since $\tfrac{\log c}{\log r_c}\geq0$, we get	$\overline{\alpha}(\nu)\geq s$. Since $s$ was again arbitrary, we have $\overline{\alpha}(\nu)\geq\overline{\dim}_{\mathrm{M}}(\nu)$.
	
The proof of the other equality is similar. Let $s>\underline{\dim}_{\rm M}(\nu)$. Then there exist $c>0$ and a sequence $r_n$ with $r_n\to0$ such that
	$$
	\liminf_{r\to0}\max_{x\in X}\frac{\log\nu(B(x,r))}{\log r}\leq\liminf_{n\to\infty}\max_{x\in X}\frac{\log\nu(B(x,r_n))}{\log r_n}\leq\liminf_{n\to\infty}\left(s+\frac{\log c}{\log r_n}\right)=s.
	$$
Now, let $s<\underline{\dim}_{\rm M}(\nu)$. Then for every $c>0$ and every sequence $r_n$ with $r_n\to0$ there exists $x\in X$ such that $\nu(B(x,r_n))<c(r_n)^s$. Hence, choosing $r_n$ to be the sequence for which $\underline{\alpha}(\nu)$ is achieved, we get that
	$$
	\underline{\alpha}(\nu)=\lim_{n\to\infty}\max_{y\in X}\frac{\log\nu(B(y,r_n))}{\log r_n}\geq\lim_{n\to\infty} s+\frac{\log c}{\log r_n}=s.
	$$
\end{proof}

The relationship of the Minkowski dimension of $\nu$ to other well-studied notions becomes more apparent from this formulation. Recall that the lower and upper local dimensions of a measure $\nu$ at a point $x$ are
\begin{equation*}
\underline{\dim}_\mathrm{loc}(\nu,x)= \liminf_{r\to 0} \frac{\log \nu(B(x,r))}{\log r} \;\text{ and }\; \overline{\dim}_\mathrm{loc}(\nu,x)= \limsup_{r\to 0} \frac{\log \nu(B(x,r))}{\log r}.
\end{equation*}
The local dimension looks at the measure of a ball around a fixed point $x$, while the Minkowski dimension always takes the point $y_r$ at scale $r$ for which $\nu(B(y_r,r))$ is minimal, the `least accessible part' of the attractor. For self-similar sets taking a point $x^*$ which maximises $\dim_\mathrm{loc}(\nu_{\p},x)$ automatically provides a sequence $y_r\equiv x^*$ that gives $\dim_{\mathrm{M}}\nu_{\p}$. However, for the Bedford--McMullen carpets we will show that $\dim_{\mathrm{M}}\nu_{\p}$ can be strictly larger than $\max_x \dim_\mathrm{loc}(\nu_{\p},x)$, see Remark~\ref{remark:dimloc}. Another related concept is the quantization problem for probability measures, see~\cite{Kessebohmer_Quantization} for some background.

%%%%%%%%%%%%%%%%%%%%%%%%%%%%%%%%%%%%%%%%%%%%%%%%%%%%%%%%%%%%%%%%%%%%%%%%%%%%%%%%
%%%%%%%%%%%%%%%%%%%%%%%%%%%%%%%%%%%%%%%%%%%%%%%%%%%%%%%%%%%%%%%%%%%%%%%%%%%%%%%%
\section{Main results}\label{sec:results}

We begin with the asymptotic pointwise almost sure behaviour of $T_r(\ii,x_0)$. Let $N_r(X)$ denote the smallest number of open sets of diameter $r$ required to cover the set $X$. The \emph{lower dimension} of $X$ is
\begin{multline}\label{eq:dimL}
\dim_{\mathrm{L}} X=\sup \big\{\alpha: (\exists\, C>0) \text{ such that } (\forall\, 0<r<R \leq|X| \text{ and } x \in X)  \\
\big.N_{r}(B(x, R) \cap X) \ge C(R/r)^{\alpha}\big\}.
\end{multline}
Recall, $\rnu=\pi_*\rmu$ is the pushforward measure of $\rmu$ through the natural projection $\pi$ defined in~\eqref{eq:natproj}. If $\mu$ is a Bernoulli measure $\mu=\mu_{\p}$, then since the $\mu_\p$ measure of a cylinder set is independent of the order of digits, we clearly have $\overleftarrow{\mu_\p}=\mu_\p$. Thus, in case of the measure $\nu_\p=\pi_\ast \mu_\p$, which equivalently is the unique measure that satisfies
\begin{equation}\label{eq:invmeasure}
\nu_{\p}(\cdot) = \sum_{i=1}^N p_i \nu_{\p}(f_i^{-1}(\cdot)),
\end{equation}
one can replace $\rnu$ by $\nu_\p$.

\begin{thm}\label{thm:2}
Let $\mathcal{F}$ be an arbitrary IFS which satisfies~\eqref{eq:condition} and whose attractor $\Lambda$ satisfies $\dim_{\rm L}\Lambda>0$. Then for any measure $\mu$ on $\Sigma$ with one-sided exponential decay of correlations \eqref{eq:decay} and for $\mu$-a.e. $\ii$ and every $x_0\in \Lambda$
	\begin{equation*}
	\liminf_{r \rightarrow 0}\frac{\log T_r(\ii,x_0)}{-\log r} = \underline{\dim}_{\mathrm{M}}(\rnu) \;\text{ and }\; \limsup_{r \rightarrow 0}\frac{\log T_r(\ii,x_0) }{-\log r} = \overline{\dim}_{\mathrm{M}}(\rnu).
	\end{equation*}
In particular, if $\mu=\mu_{\p}$ is Bernoulli, then one can replace $\rnu$ by $\nu_\p$.
\end{thm}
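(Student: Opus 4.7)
The plan is to reverse the dynamics so that the orbit of the chaos game behaves like a sequence of approximately independent samples from $\rnu$, and then to apply standard covering and hitting-time estimates. The starting observation is that $x_k = f_{\overleftarrow{\ii|k}}(x_0)$ lies in the cylinder $f_{\overleftarrow{\ii|k}}(\Lambda)$, of diameter at most $a^k \mathrm{diam}(\Lambda)$. For a target scale $r>0$, I would set $m = m(r) := \lceil \log(4\mathrm{diam}(\Lambda)/r)/|\log a|\rceil$ so that length-$m$ cylinders have image diameter $\leq r/4$, and introduce the reversed windows $w_j := i_{j+m}i_{j+m-1}\cdots i_{j+1}$ for $j\geq 0$. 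By $\sigma$-invariance of $\mu$ and the definition of $\rmu$, each $w_j$ has marginal distribution $\mathbb{P}(w_j = \mathbf{u}) = \rmu([\mathbf{u}])$, and applying \eqref{eq:decay} blockwise shows that any subcollection $w_{j_1},\ldots,w_{j_K}$ with $j_{k+1}-j_k \geq 2m$ is approximately independent, with multiplicative correction $\prod_k (1+\kappa 2^{-\varepsilon m})$ that tends to $1$ as $r\to 0$ whenever $K$ is polynomial in $1/r$.

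For the upper bounds $\limsup \log T_r/(-\log r) \leq \overline{\dim}_{\mathrm M}(\rnu)$ and $\liminf \log T_r/(-\log r) \leq \underline{\dim}_{\mathrm M}(\rnu)$, I fix $s$ strictly above the relevant Minkowski dimension. Lemma~\ref{lem:equal} then supplies $c>0$ with $\rnu(B(y,r))\geq cr^s$ uniformly in $y\in\Lambda$, either for all small $r$ or along a subsequence $r_n\to 0$. Covering $\Lambda$ by $N_{r/4}(\Lambda)\leq r^{-\ubd\Lambda-o(1)}$ balls $B(z_i,r/4)$, the collection $U_i := \{\mathbf{u}:|\mathbf{u}|=m,\ f_\mathbf{u}(\Lambda)\cap B(z_i,r/4)\neq\emptyset\}$ satisfies $\rmu\bigl(\bigcup_{\mathbf{u}\in U_i}[\mathbf{u}]\bigr)\geq \rnu(B(z_i,r/4))\geq c(r/4)^s$ (since $\pi^{-1}(B(z_i,r/4))\subseteq\bigcup_{\mathbf{u}\in U_i}[\mathbf{u}]$), and the event $w_j\in U_i$ forces $x_{j+m}$ into $B(z_i,r/2)$. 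An orbit that fails to be $r$-dense must miss some $B(z_i, 3r/4)$ entirely, so that $w_j\notin U_i$ for all $j$; iterating the approximate independence along the $\lfloor n/(2m)\rfloor$ spaced indices $j=0,2m,4m,\ldots$ then yields
\[
\mathbb{P}(T_r > n) \leq \sum_i \mathbb{P}(w_j\notin U_i\ \forall j)\leq Cr^{-\ubd\Lambda-o(1)}\exp(-c'r^s n/m).
\]
Taking $n = Cr^{-s}(\log 1/r)^2$ with $C$ sufficiently large makes this summable along $r_k=2^{-k}$; Borel--Cantelli together with monotonicity of $r\mapsto T_r$ then delivers both upper bounds.

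For the matching lower bounds, I fix $s$ strictly below the relevant Minkowski dimension and use Lemma~\ref{lem:equal} to pick $y_r\in\Lambda$ with $\rnu(B(y_r,2r))\leq r^s$ (for all small $r$, or along a subsequence). For $k\geq m$ one has $\{x_k\in B(y_r,r)\}\subseteq \{w_{k-m}\in V_{y_r}\}$ with $V_{y_r}:=\{\mathbf{u}:|\mathbf{u}|=m,\ f_\mathbf{u}(\Lambda)\cap B(y_r,r)\neq\emptyset\}$ satisfying $\rmu\bigl(\bigcup_{\mathbf{u}\in V_{y_r}}[\mathbf{u}]\bigr)\leq \rnu(B(y_r,2r))\leq r^s$. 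Since $T_r\geq\min\{k:x_k\in B(y_r,r)\}$, a union bound yields
\[
\mathbb{P}(T_r\leq n) \leq \mathbb{P}\bigl(\exists k<m:\, x_k\in B(y_r,r)\bigr) + nr^s,
\]
so that choosing $n=r^{-s+\delta}$ reduces matters to controlling the first term. The main obstacle I anticipate is exactly this initial segment $k<m$ to which the reversed-window formalism does not apply: the naive bound $\mathbb{P}(x_k\in B(y_r,r))\leq\rnu(B(y_r,2a^k\mathrm{diam}\Lambda))$ does not sum usefully over $k<m$. I would overcome this by exploiting $\dim_{\mathrm L}\Lambda>0$ to arrange $y_r$ inside a portion of $\Lambda$ well-separated from the finitely many possible values of $x_0,\ldots,x_{m-1}$, or alternatively by a second-moment argument based on \eqref{eq:decay} applied to $|\{k\leq n: x_k\in B(y_r,r)\}|$. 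Once this obstacle is handled, Borel--Cantelli and monotonicity provide the lower bounds, and letting $s$ approach the relevant Minkowski dimension completes the proof.
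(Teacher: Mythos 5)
Your architecture coincides with the paper's: both directions are Borel--Cantelli arguments, the upper bound via widely spaced reversed windows combined with \eqref{eq:decay}, the lower bound via the hitting time of the ball of minimum measure together with $\dim_{\mathrm{L}}\Lambda>0$ to neutralise the initial point. However, two steps as written do not go through. First, the claim that the correction factor $\prod_k(1+\kappa 2^{-\varepsilon m})$ tends to $1$ whenever the number of windows is polynomial in $1/r$ is false. With spacing $2m$ the gap between consecutive windows is $m\asymp \log(1/r)/|\log a|$, so each factor is $1+\kappa r^{\gamma}$ with $\gamma=\varepsilon\log 2/|\log a|$ a \emph{fixed} constant, and with $K\asymp r^{-s}\log(1/r)$ windows the product is $\exp\bigl(O(r^{\gamma-s}\log(1/r))\bigr)$, which diverges as soon as $s>\gamma$ --- and $s$ must approach $\overline{\dim}_{\mathrm{M}}(\rnu)$, which bears no relation to $\gamma$. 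The repair is to enlarge the spacing to $m+\frac{s+1}{\varepsilon}\log_2(1/r)$ (this is exactly the paper's choice $m_k=\lceil L(n_k+3)+\frac{\alpha+1}{\varepsilon}n_k\rceil$), so that each factor becomes $1+\kappa r^{s+1}$; this costs only an extra $\log(1/r)$ factor in the total time $n$ and hence nothing at the level of exponents.

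Second, the obstacle you identify in the lower bound is genuine, but your first proposed remedy does not work as stated: over all $\ii$ the initial segment $x_0,\dots,x_{m-1}$ takes up to $N^{m}\asymp r^{-\log N/|\log a|}$ distinct values, so a single $y_r$ avoiding all of them would have to be one of $\mathrm{poly}(1/r)$ disjoint $r$-balls packed inside a reference ball of near-minimal measure; by \eqref{eq:lowerdim} that reference ball must then have radius of order $r^{1-c}$ for some $c>0$, and its $\rnu$-measure is only bounded by $r^{(1-c)s}$, which loses a constant in the exponent. The second-moment alternative does not address the problem either, since the difficulty is a deterministic early visit (e.g.\ $x_0\in B(y_r,r)$), not a fluctuation. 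The paper's resolution is to let the avoided ball depend on $\ii$: using $\dim_{\mathrm{L}}\Lambda>0$, pack $\lceil L(r)\rceil+2\asymp\log(1/r)$ disjoint $r$-balls into $B\bigl(y_r',\,r(\log(1/r))^{2/d}\bigr)$ around the point $y_r'$ of minimal measure; for each fixed $\ii$ the orbit up to time $\lceil L(r)\rceil$ has too few points to meet them all, each candidate ball has measure at most $\rnu\bigl(B(y_r',r(\log(1/r))^{2/d})\bigr)\le r^{s-o(1)}$, and the concluding union bound runs over only $\mathrm{poly}(\log(1/r))$ candidate centres. With these two repairs your argument becomes essentially the paper's proof of Propositions~\ref{prop:upper} and~\ref{prop:lower}.
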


\begin{comment}
If $\mu$ is a Bernoulli measure $\mu=\mu_{\p}$, then since the $\mu_\p$ measure of a cylinder set is independent of the order of digits, we clearly have $\overleftarrow{\mu_\p}=\mu_\p$. Let $\nu_\p:=\pi_\ast \mu_\p$
The following then follows directly from Theorem \ref{thm:2}.

\begin{cor}\label{thm1-bernoulli}
Let $\mathcal{F}$ be an arbitrary IFS which satisfies~\eqref{eq:condition} and whose attractor $\Lambda$ satisfies $\dim_{\rm L}\Lambda>0$. Then for $\mu_\p$-a.e. $\ii$ and every $x_0\in \Lambda$
	\begin{equation*}
	\liminf_{r \rightarrow 0}\frac{\log T_r(\ii,x_0)}{-\log r} = \underline{\dim}_{\mathrm{M}}(\nu_\p) \;\text{ and }\; \limsup_{r \rightarrow 0}\frac{\log T_r(\ii,x_0) }{-\log r} = \overline{\dim}_{\mathrm{M}}(\nu_\p).
	\end{equation*}
\end{cor}
\end{comment}

Theorem \ref{thm:2} can easily be flipped over to obtain the decay rate of $d_{\mathrm H} \big(\mathcal{O}_n(\ii,x_0),\Lambda\big)$. %Note that in the case where $\mu=\mu_\p$ is Bernoulli, as in Corollary \ref{thm1-bernoulli} above, one can replace $\rnu$ by $\nu_\p$.

\begin{cor}\label{thm:1}
Let $\mathcal{F}$ be an arbitrary IFS which satisfies~\eqref{eq:condition} and whose attractor $\Lambda$ satisfies $\dim_{\rm L}\Lambda>0$. Then for any measure $\mu$ with one-sided exponential decay of correlations \eqref{eq:decay} and for $\mu$-a.e. $\ii$ and every $x_0\in \Lambda$
	\begin{equation*}
	\liminf_{n \rightarrow \infty}\frac{\log d_{\mathrm H} \big(\mathcal{O}_n(\ii,x_0),\Lambda\big) }{-\log n} = \frac{1}{\overline{\dim}_{\mathrm{M}}(\rnu)} \;\text{ and }\;
	\limsup_{n \rightarrow \infty}\frac{\log d_{\mathrm H} \big(\mathcal{O}_n(\ii,x_0),\Lambda\big) }{-\log n} = \frac{1}{\underline{\dim}_{\mathrm{M}}(\rnu)}.
	\end{equation*}
Again, if $\mu=\mu_{\p}$ is Bernoulli, then one can replace $\rnu$ by $\nu_\p$.
\end{cor}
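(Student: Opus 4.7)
The plan is to exploit the duality between the waiting time $T_r$ and the Hausdorff distance $D_n := d_{\mathrm H}(\mathcal O_n(\ii,x_0), \Lambda)$. The key observation is that $D_n$ is non-increasing in $n$ and that by definition $T_r(\ii, x_0) = \inf\{n \geq 0 : D_n < r\}$, so $T_r \leq n \iff D_n < r$. Consequently $T_r$ and $D_n$ are essentially inverses of each other, and the asymptotic rates translate via algebraic inversion: one expects $D_n \asymp n^{-1/\alpha}$ whenever $T_r \asymp r^{-\alpha}$.

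To obtain the $\limsup$ equality, I would use the $\liminf$ statement of Theorem \ref{thm:2}. Fix $\varepsilon > 0$. Eventually in $r$ one has $T_r \geq r^{-\underline{\dim}_{\mathrm M}(\rnu) + \varepsilon}$; applying this with any $r$ slightly exceeding $D_n$ (so that $T_r \leq n$) and then letting $r \downarrow D_n$ yields $D_n \geq n^{-1/(\underline{\dim}_{\mathrm M}(\rnu) - \varepsilon)}$ for all sufficiently large $n$. Conversely, along some sequence $r_k \to 0$ we have $T_{r_k} \leq r_k^{-\underline{\dim}_{\mathrm M}(\rnu) - \varepsilon}$; setting $n_k := T_{r_k}$ gives $D_{n_k} < r_k \leq n_k^{-1/(\underline{\dim}_{\mathrm M}(\rnu) + \varepsilon)}$. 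Letting $\varepsilon \to 0$ pins $\limsup \log D_n / (-\log n)$ between $1/\underline{\dim}_{\mathrm M}(\rnu)$ from above and below.

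The $\liminf$ equality follows by the symmetric argument using the $\limsup$ part of Theorem \ref{thm:2}. Eventually $T_r \leq r^{-\overline{\dim}_{\mathrm M}(\rnu) - \varepsilon}$, so choosing $r = n^{-1/(\overline{\dim}_{\mathrm M}(\rnu) + \varepsilon)}$ forces $T_r \leq n$ and hence $D_n < n^{-1/(\overline{\dim}_{\mathrm M}(\rnu) + \varepsilon)}$. For the matching lower bound along a subsequence, pick $r_k \to 0$ with $T_{r_k} \geq r_k^{-\overline{\dim}_{\mathrm M}(\rnu) + \varepsilon}$ and set $n_k := T_{r_k} - 1$; by definition of $T_{r_k}$ we have $D_{n_k} \geq r_k$, while $n_k + 1 \geq r_k^{-\overline{\dim}_{\mathrm M}(\rnu) + \varepsilon}$ gives $r_k \geq (n_k+1)^{-1/(\overline{\dim}_{\mathrm M}(\rnu) - \varepsilon)}$, and absorbing the $+1$ into a further $\varepsilon$-loss completes the bound. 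Specialising to a Bernoulli $\mu = \mu_{\p}$, where $\rnu = \nu_{\p}$ as noted before Theorem~\ref{thm:2}, yields the final sentence of the corollary.

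There is no substantial obstacle in this argument; it is purely a bookkeeping exercise once the duality $T_r \leq n \iff D_n < r$ is recorded. The only point requiring care is to track which side of each $\varepsilon$-perturbation propagates through the inversion (so that ``$\liminf$ of $T_r$'' matches ``$\limsup$ of $D_n$'' and vice versa), and to note that subsequential control over $T_r$ at scales $r_k$ transfers to subsequential control over $D_n$ at the indices $n_k = T_{r_k}$ or $T_{r_k} - 1$.
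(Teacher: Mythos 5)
Your argument is correct and is essentially the paper's own proof: both rest on the duality $D_n < r \iff T_r(\ii,x_0)\le n$ together with $D_{T_r-1}\ge r$, and differ only in presentation (you run the inversion through explicit $\varepsilon$-perturbations, while the paper sandwiches the ratio $-\log D_n/\log n$ directly between two $\liminf$'s of $-\log r/\log T_r$). No gap; the bookkeeping of which bound from Theorem~\ref{thm:2} feeds which side of each equality is handled correctly.
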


\begin{proof}
	From the definition of $T_r(\ii,x_0)$ it follows that $r\leq d_{\mathrm H} \big(\mathcal{O}_n(\ii,x_0),\Lambda\big)$ for every $n<T_{r}(\ii,x_0)$, in particular, we have $n\leq T_{ d_{\mathrm H} (\mathcal{O}_n(\ii,x_0),\Lambda)/2}(\ii,x_0)$. This and Theorem~\ref{thm:2} imply that
	\[
	\begin{split}
	\frac{1}{\overline{\dim}_{\mathrm{M}}(\rnu)}&=\liminf_{r \rightarrow 0}\frac{-\log r}{\log T_r(\ii,x_0)}\geq\liminf_{r \rightarrow 0}\frac{-\log d_{\mathrm H} \big(\mathcal{O}_{T_r(\ii,x_0)-1}(\ii,x_0),\Lambda\big)}{\log T_r(\ii,x_0)}\\
	&\geq\liminf_{n \rightarrow \infty}\frac{-\log d_{\mathrm H} \big(\mathcal{O}_{n-1}(\ii,x_0),\Lambda\big)}{\log n}=\liminf_{n \rightarrow \infty}\frac{-\log d_{\mathrm H} \big(\mathcal{O}_{n}(\ii,x_0),\Lambda\big)}{\log n}\\
	&\geq\liminf_{n \rightarrow \infty}\frac{-\log d_{\mathrm H} \big(\mathcal{O}_n(\ii,x_0),\Lambda\big) }{\log T_{d_{\mathrm H} (\mathcal{O}_n(\ii,x_0),\Lambda)/2}(\ii,x_0)}\geq\liminf_{r\to0}\frac{-\log r}{\log T_{r/2}(\ii,x_0)}=\frac{1}{\overline{\dim}_{\mathrm{M}}(\rnu)}.
	\end{split}\]
	The proof of the second part is similar.
	\begin{comment}
	\[
	\begin{split}
	\frac{1}{\underline{\dim}_{\mathrm{M}}(\nu)}&=\limsup_{r \rightarrow 0}\frac{-\log r}{\log T_r(\ii)}\leq\limsup_{r \rightarrow 0}\frac{-\log d_{\mathrm H} \big(\mathcal{O}_{T_r(\ii)}(\ii),\Lambda\big)}{\log T_r(\ii)}\\
	&\leq\limsup_{n \rightarrow \infty}\frac{-\log d_{\mathrm H} \big(\mathcal{O}_{n}(\ii),\Lambda\big)}{\log n}\leq\limsup_{n \rightarrow \infty}\frac{\log d_{\mathrm H} \big(\mathcal{O}_{n}(\ii),\Lambda\big) }{-\log T_{d_{\mathrm H} \big(\mathcal{O}_{n}(\ii),\Lambda\big)}(\ii)}\\
	&\leq\limsup_{r\to0}\frac{\log r}{-\log T_{r}(\ii)}=\frac{1}{\underline{\dim}_{\mathrm{M}}(\nu)}.
	\end{split}\]
	\end{comment}
\end{proof}

\begin{rem}\label{rem:0}
The condition of one-sided exponential decay seems necessary. The other condition $\dim_{\rm L}\Lambda>0$ seems purely technical and we believe the result should still hold without it. In particular, we will show in Section~\ref{sec:poslower} some examples when this condition can be omitted. It is assumed only to ensure that there are sufficiently many small balls, where $\dim_{\rm M}\pi_*\rmu$ is attained and so the initial point $x_0$ cannot cause a strict drop in the approximation by beginning the chaos game at the least accessible part of $\Lambda$. Note, however, that $\dim_{\rm L}\Lambda>0$ is not a very restrictive condition. For example, all self-affine sets are uniformly perfect~\cite{XieEtal_ProcAMS2003} which is equivalent to having positive lower dimension~\cite[Lemma 2.1]{KaenmakiEtal_UniformlyPerfect13}. As an application, we look at Bedford--McMullen carpets, see Section~\ref{sec:IntroBMCarpets}.
\end{rem}

Next we consider the expected value of the cover time $\E_\mu T_r(x_0)$, which denotes the expectation of $T_r(\i,x_0)$ with respect to the measure $\mu$. We find that, roughly speaking, this can be bounded in terms of $(1/r)^\alpha$ (the reciprocal of the measure of the ball of minimum measure at scale $r$), up to some logarithmic correction factors. Define
\begin{equation}\label{o}
\underline{o}(r):= \max_{x \in \Lambda} \frac{\log \rnu(B(x,r))}{\log r} - \underline{\alpha}\;\text{ and }\; \overline{o}(r):= \max_{x \in \Lambda} \frac{\log \rnu(B(x,r))}{\log r} - \overline{\alpha}.
\end{equation}

\begin{thm} \label{thm:expectedvalue}
Let $\mathcal{F}$ be an arbitrary IFS which satisfies~\eqref{eq:condition}. Fix a $\sigma$-invariant measure $\mu$ on $\Sigma$ and let $\underline{\alpha}=\underline{\dim}_{\mathrm{M}}(\nu)$ and $\overline{\alpha}=\overline{\dim}_{\mathrm{M}}(\nu)$.
\begin{enumerate}[(a)]
\item If $\mu$ has one-sided exponential decay, there exists a constant\footnote{ The constant $C_1$ will be made explicit in the proof of Lemma \ref{a:proof}.}  $C_1$ such that for all $x_0 \in \Lambda$ and $r>0$ such that $|\overline{o}(r/4)|<\overline{\alpha}/2$ and $(r/4)^{\overline{\alpha}/2}<1/2\kappa$,
$$\E_\mu T_r(x_0) \leq C_1\left(\log(4/r)\right)^2 (r/4)^{-\overline{\alpha}-\overline{o}(r/4)},$$
where $\kappa$ is the constant defined in~\eqref{eq:decay}.
\item
If $\dim_{\mathrm{L}} \Lambda>0$ then for all $r>0$ sufficiently small\footnote{The assumptions on the size of $r$ will be made explicit in Lemma \ref{b:proof}.}  and all $x_0 \in \Lambda$,
$$\E_\mu T_r(x_0) \geq R_r^{-\underline{\alpha}+\underline{o}(R_r)}$$
where $ C_2(r(\log(1/r))^{2/\dim_{\mathrm{L}} \Lambda}) \leq R_r\leq C_3(r(\log(1/r))^{2/\dim_{\mathrm{L}} \Lambda})$ for some uniform constants $C_2,C_3$. The dependence of $R_r$ on $r$ and other parameters will be made explicit in the proof of Lemma \ref{b:proof}.
\end{enumerate}
\end{thm}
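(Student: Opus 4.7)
For part (a), I would reduce the bound on $T_r$ to a union of hitting-time bounds. Cover $\Lambda$ by $N := N_{r/4}(\Lambda)$ balls $B(z_i, r/4)$ with centres $z_i \in \Lambda$; by \eqref{eq:100} we have $N \lesssim (r/4)^{-\overline{\dim}_{\mathrm{M}}\Lambda} \leq (r/4)^{-\overline{\alpha}-\overline{o}(r/4)}$. Setting $\tau_i := \inf\{n \geq 0 : x_n \in B(z_i, 3r/4)\}$, the triangle inequality shows $T_r \leq \max_i \tau_i$, so the task reduces to bounding $\mathbb{P}_\mu(\tau_i > n)$. I would partition $\{1,\ldots,n\}$ into blocks of length $m \asymp \log(1/r)$ chosen so that $\kappa 2^{-\varepsilon m} \leq (r/4)^{\overline{\alpha}/2}$, and apply \eqref{eq:decay} inductively along the blocks to obtain $\mathbb{P}_\mu(\tau_i > n) \leq (1 - c\,\rnu(B(z_i, r/4)))^{\lfloor n/m\rfloor}$ for some absolute constant $c > 0$ absorbing the mixing error. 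Lemma \ref{lem:equal} gives $\rnu(B(z_i, r/4)) \geq (r/4)^{\overline{\alpha}+\overline{o}(r/4)}$, so a union bound yielding $\mathbb{P}_\mu(T_r > n) \leq N\,\max_i \mathbb{P}_\mu(\tau_i > n)$, combined with splitting $\E_\mu T_r = \sum_{n\geq 0} \mathbb{P}_\mu(T_r > n)$ at the threshold $n^* = m \log N / (c(r/4)^{\overline{\alpha}+\overline{o}(r/4)})$, delivers the claimed bound; the $(\log(4/r))^2$ factor arises as $m \cdot \log N$.

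For part (b), the plan is to locate a single ball that the orbit finds hard to hit, and to use its hitting time as a lower bound. Set $R_r \asymp r\,(\log(1/r))^{2/\dim_{\mathrm{L}}\Lambda}$ and, using Lemma \ref{lem:equal} together with the defining identity $\max_y \log \rnu(B(y, R_r))/\log R_r = \underline{\alpha} + \underline{o}(R_r)$, pick $y^* \in \Lambda$ attaining this maximum, so that $\rnu(B(y^*, R_r))$ is as small as the definition allows at scale $R_r$. Since $r$-density forces the orbit to enter $B(y^*, r) \subseteq B(y^*, R_r)$, we have $T_r \geq \tau^* := \inf\{n : x_n \in B(y^*, R_r)\}$. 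The hypothesis $\dim_{\mathrm{L}}\Lambda > 0$ supplies enough candidate centres at every scale to arrange $\mathrm{dist}(x_0, y^*) \geq R_r$; combined with the uniform contraction estimate $\|x_k - f_{\overleftarrow{\ii|k}}(y)\| \leq a^k \,\mathrm{diam}\,\Lambda$ for any $y \in \Lambda$, this guarantees that for $k \leq k_0 \asymp \log(1/R_r)$ the orbit cannot yet have entered $B(y^*, R_r)$, which is exactly what forces the logarithmic factor in $R_r$. For $k > k_0$, invariance of $\mu$ together with \eqref{eq:decay} applied to the reversed word $\overleftarrow{\ii|k}$ gives $\mathbb{P}_\mu(x_k \in B(y^*, R_r)) \lesssim \rnu(B(y^*, R_r))$; a union bound then yields $\mathbb{P}_\mu(\tau^* \leq n) \lesssim n\, \rnu(B(y^*, R_r))$, and taking $n$ just below the level where the right-hand side becomes $1/2$ produces the stated lower bound on $\E_\mu T_r$.

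The main obstacle in both parts is converting the symbolic mixing property \eqref{eq:decay} into a genuinely geometric statement about the orbit, which hinges on balancing the contraction rate $a^k$ against the scales $r$ and $R_r$. In (a) this balance pins down the block length $m$; in (b) it fixes both the burn-in time $k_0$ and the shape of the logarithmic correction in $R_r$, and it is precisely this calibration that explains the appearance of $\dim_{\mathrm{L}}\Lambda > 0$, since a positive lower dimension provides the geometric room needed to keep $y^*$ separated from $x_0$ while still choosing it at the `least accessible' part of the attractor.
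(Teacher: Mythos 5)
Your part (a) is correct and reaches the same bound, but by a genuinely different aggregation step. The paper first proves a Matthews-type result (Proposition \ref{MM}), which converts a uniform upper bound $T$ on the expected hitting time of a single ball into the bound $CT(1+\tfrac12+\cdots+\tfrac{1}{\#\I})$ on the expected cover time; the harmonic sum supplies one factor of $\log(1/r)$ and the block length $\ell(r/4)\asymp\log(1/r)$ in the single-ball hitting estimate supplies the other. You instead use the crude union bound $\mathbb{P}_\mu(T_r>n)\leq N\max_i\mathbb{P}_\mu(\tau_i>n)$ and split $\E_\mu T_r=\sum_n\mathbb{P}_\mu(T_r>n)$ at the threshold $n^*\asymp m\log N/(r/4)^{\overline{\alpha}+\overline{o}(r/4)}$; the factor $m\log N\asymp(\log(1/r))^2$ comes out the same. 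Both routes are valid here; Matthews' method is sharper when the balls have comparable hitting times (it would matter if one wanted to remove a log), while your version is more elementary and needs only the per-block mixing estimate, which you set up exactly as the paper does (block length large enough that the position after each block is determined by the symbols in that block up to precision $r$, plus a mixing gap beating $\kappa 2^{-\varepsilon(\cdot)}$). Two cosmetic points: the bound $N\leq(r/4)^{-\overline{\alpha}-\overline{o}(r/4)}$ is not needed and not obviously true for every fixed $r$; all you use is $\log N\lesssim\log(1/r)$, which follows from \eqref{eq:boxup}. And your argument delivers the quasi-Bernoulli-type constant implicitly through \eqref{eq:decay}, consistent with the hypotheses of (a).

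Part (b) has a genuine gap. You reduce $T_r$ to the hitting time $\tau^*$ of the single ball $B(y^*,R_r)$ of minimum measure at scale $R_r$, and then need to rule out the orbit entering this ball during an initial burn-in of length $k_0\asymp\log(1/R_r)$. Your two supporting claims both fail. First, you cannot ``arrange $\mathrm{dist}(x_0,y^*)\geq R_r$'': the point $y^*$ is pinned down (up to comparable measure) by the requirement that $\rnu(B(y^*,R_r))$ be minimal, and relocating it away from $x_0$ destroys exactly the property you need; positive lower dimension gives you many disjoint balls \emph{inside} a given ball, not many far-apart centres all attaining the minimum measure at scale $R_r$. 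Second, even granting $\mathrm{dist}(x_0,y^*)\geq R_r$, the contraction estimate $\|x_k-f_{\overleftarrow{\ii|k}}(y)\|\leq a^k\,\mathrm{diam}\,\Lambda$ does not prevent $x_k\in B(y^*,R_r)$ for small $k$ — already $x_1=f_{i_1}(x_0)$ can land there (take $y^*$ near the fixed point of $f_{i_1}$). The real difficulty with $k\leq L(r)$ is not that such visits are impossible but that their probability cannot be controlled by $\rnu$, since the position still depends on $x_0$. The correct mechanism (and the paper's) is different: with $R_r\asymp r(\log(1/r))^{2/\dim_{\mathrm{L}}\Lambda}$, the lower-dimension estimate \eqref{eq:lowerdim} gives at least $L(r)+2$ \emph{disjoint} balls of radius $r$ inside $B(y^*,R_r)$. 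The first $\lceil L(r)\rceil+1$ orbit points can meet at most $\lceil L(r)\rceil+1$ of them, so for every $\ii$ some ball $B(y(\ii),r)$ is missed deterministically during the burn-in; its measure is bounded by $\rnu(B(y^*,R_r))$ by containment, and summing the (disjoint) ball measures gives $\mu(T_r\leq n)\leq n\,\rnu(B(y^*,R_r))$ for the relevant range of $n$, whence the bound. This is precisely why the exponent $2/\dim_{\mathrm{L}}\Lambda$ appears in $R_r$. Note also that part (b) assumes only $\sigma$-invariance, so your appeal to \eqref{eq:decay} for $\mathbb{P}_\mu(x_k\in B(y^*,R_r))\lesssim\rnu(B(y^*,R_r))$ is both unavailable and unnecessary: for $k\geq L(r)$ this follows from invariance alone, since the law of the reversed word $\overleftarrow{\ii|k}$ is $\rmu$ on length-$k$ cylinders.
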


Since $r^{-\underline{\alpha}+\underline{o}(r)}$ and $r^{-\overline{\alpha}-\overline{o}(r)}$ are lower and upper bounds respectively on the measure of the ball of minimum measure at scale $r$, the main terms in Theorem \ref{thm:expectedvalue}(a) and (b) are essentially analogous to the main theorem in \cite{JurgaMorris_ChaosGame_2020arxiv}. Moreoever, Theorem \ref{thm:expectedvalue}(a) is essentially analogous to the upper bound on the cover time for the chaos game induced by a Bernoulli measure on self-similar sets in \cite{JurgaMorris_ChaosGame_2020arxiv} (except we have gained an extra logarithmic factor $\log(1/r)$ due to the fact that the measure is not necessarily Bernoulli but only has exponential decay). However, the lower bound in Theorem \ref{thm:expectedvalue}(b) is essentially just the trivial bound which estimates the expected cover time from below by the expected time to hit the ball of minimum measure. In order to improve this (to be analogous to the lower bound in \cite{JurgaMorris_ChaosGame_2020arxiv} for instance), one would need some extra information on where the balls of minimum measure were located, in particular to guarantee that they are sufficiently far from each other, in a dynamical sense.

%%%%%%%%%%%%%%%%%%%%%%%%%%%%%%%%%%%%%%%%%%%%%%%%%%%%%%%%%%%%%%%%%%%%%%%%%%%%%%%%%%%%%%%%%%%%%%%%%%%%%%%%%%%%%
\subsection{Examples with optimal rate} \label{measures}

From an applied point of view, the chaos game provides an efficient algorithm to produce images  well-approximating the attractor of an IFS. The choice of the measure $\mu$ driving the chaos game influences the ``quality" of the image we obtain. For any $r>0$, consider a maximal $r$-packing of the attractor $\Lambda$, i.e. a collection of sets of diameter $r$ with disjoint interiors that cover $\Lambda$. The orbit $\mathcal{O}_n(\ii,x_0)$ becomes $r$-dense in $\Lambda$ once it has visited all elements of the $r$-packing. Therefore, the practicality of the algorithm depends on how easy is it to define the $r$-packing and to keep track which elements of the $r$-packing the orbit has visited.

\subsubsection{Self-similar sets with SSP} If the IFS satisfies the \emph{strong separation property}, i.e.\linebreak $f_i(\Lambda)\cap f_j(\Lambda)=\emptyset$ for every $i\neq j$, then the natural projection $\pi:\Sigma\to\Lambda$ defined in~\eqref{eq:natproj} is a bijection between the symbolic space and the attractor. Hence, there is a chance to define the packing in terms of finite length strings. When the $f_i$ are all similarity mappings with contraction ratios $0<\lambda_i<1$, then the symbolic $r$-packing consists of those cylinder sets $[i_1,\ldots, i_m]$ for which $\lambda_{i_1}\cdot \ldots \cdot \lambda_{i_m}\leq r < \lambda_{i_1}\cdot \ldots \cdot \lambda_{i_{m-1}}$. If $j$ is the next chosen index, then the chaos game transitions from the current state $[i_1,\ldots, i_m]$ to the unique cylinder of the packing containing the cylinder $[j,i_1,\ldots, i_m]$. As mentioned before, $\dim_{\mathrm{M}}\Lambda$ is achieved by a self-similar measure $\nu_{\mathbf{p}}$ if and only if $p_i=\lambda_i^s$, where $s$, often called the \emph{similarity dimension}, is the unique solution to the equation $\sum_i\lambda_i^s=1$. By Theorem~\ref{thm:2} and~\eqref{eq:100}, the shortest running time to approximate $\Lambda$ with resolution $r$ is to choose $\mathbf{p}$ this way. Note that this also follows from~\cite{JurgaMorris_ChaosGame_2020arxiv}.

\subsubsection{Self-similar sets with overlaps} The separation condition is important, but can be circumvented in some cases in order to achieve the optimal possible rate. Suppose that the similarity dimension $s$ of the self-similar IFS $\{f_i(x)=\lambda_iO_ix+t_i\}_{i=1}^N$ on $\R^d$ is smaller than the dimension of the state space. Moreover, suppose that $\dim_{\rm M}\Lambda=s$, which happens generically, see \cite{Hochman_Annals14,HochmanIndDim}. Then for the self-similar measure $\nu_{\mathbf{p}}$ with $p_i=\lambda_i^s$ we have 
$$
\frac{\log\nu_{\mathbf{p}}((B(x,r))}{\log r}\leq\frac{\log\nu_{\mathbf{p}}(\pi([i_1,\ldots,i_m]))}{\log r}\leq s+\frac{s\min_i\log\lambda_i}{\log r},
$$
and so $\overline{\dim}_{\rm M}\nu_{\mathbf{p}}\leq s$. On the other hand, by our assumption $\underline{\dim}_{\rm M}\nu_{\mathbf{p}}\geq\underline{\dim}_{\rm M}\Lambda=s$.

\subsubsection{Bernoulli convolution} We saw that the most efficient convergence rate can be found for a wide class of self-similar systems if the similarity dimension is smaller than the dimension of the state space. However, this is not the case for Bernoulli convolutions. For $\lambda\in(1/2,1)$, consider the overlapping IFS
\begin{equation}\label{eq:BernoulliConv}
	\mathcal{F}=\{\lambda x-1,\, \lambda x+1\}, \quad\text{with attractor } \Lambda=\Big[ \frac{-1}{1-\lambda},\frac{1}{1-\lambda} \Big].
\end{equation}
The self-similar measure $\nu_{\mathbf{p}}$ associated to $\mathcal{F}$ is the well-known biased \emph{Bernoulli convolution}. There is extensive literature on it, especially for the non-biased $\mathbf{p}=(1/2,1/2)$ case, see the survey~\cite{PSS_60YearsBernoulli} or the recent influential papers~\cite{Shmerkin_Annals19,Varju_JAMS19} and references therein. Clearly, $\dim_{\rm M}\Lambda =s=1$, however it is easy to see that 
\begin{equation*}
	\dim_{\mathrm{M}}\nu_{\mathbf{p}}\geq\lim_{r\to 0}\max_{x\in\{-(1-\lambda)^{-1},(1-\lambda)^{-1}\}\}} \frac{\log \nu_{\mathbf{p}}(B(x,r))}{\log r}=\frac{\min\{\log p_1,\log p_2\}}{\log\lambda} \geq \frac{\log 2}{\log(1/\lambda)}>1
\end{equation*}
implying that there is no Bernoulli measure which achieves $\dim_{\rm M}\Lambda$. 

There are two further natural classes of measures satisfying one-sided exponential decay \eqref{eq:decay} which motivate the study of the chaos game beyond the i.i.d. and self-similar case. 

\subsubsection{Self-conformal sets} For conformal IFS $\{f_1,\ldots,f_N\}$ on $\R^d$, where the contractions are $C^{1+\epsilon}$, the natural measure $\rnu$ on the attractor $\Lambda$ can be expressed as the push-forward of the Gibbs measure $\rmu$ for the H\"older continuous potential $\ii\mapsto s\log|f_{i_1}'(\pi(\sigma\ii))|$, i.e. $\rnu=\pi_*\rmu$, where $s$ is the conformality dimension, see \cite[p. 89]{Falconer3-1997}. Such Gibbs measures with H\"older continuous potentials satisfy \eqref{eq:decay} (see \cite[p. 15]{bowen}),  thus our results are applicable to the reversed Gibbs measure $\mu$ (which clearly also satisfies \eqref{eq:decay}). In particular, there exists a constant $C>0$ such that for every $x\in\Lambda$ and every finite word $\iiv$ 
\begin{equation}
C^{-1}\leq\frac{\rmu([\iiv])}{|f_{\iiv}'(x)|^s}\leq C.
\end{equation}
Let us further assume that $\dim_{\mathrm{M}}\Lambda=s\leq d$, which holds for instance under the strong separation property (see \cite[p. 89]{Falconer3-1997}) or for typical systems (see \cite[Theorem~6.1]{SSU2}). Similarly to the self-similar case, it is easy to show in this case that $\dim_{\mathrm{M}}\rnu=s$. Hence, the chaos game driven by $\mu$ will distribute mass most uniformly over $\Lambda$.

\subsubsection{Self-affine sets with small dimension} Let $\{S_i(\cdot)=A_i(\cdot)+t_i\}_{i=1}^N$ be a self-affine IFS, where $A_i \in \mathcal{GL}(d,\R)$ and $t_i \in \R^d$.  The attractor of this IFS is called a \emph{self-affine} set $E$. The \emph{affinity dimension} $s_0$ is the expected Hausdorff and Minkowski dimension of $E$ \cite{falconer}. Generically, the measure of maximal Hausdorff dimension on $E$ is the pushforward $\rnu=\pi_*\rmu$ of a measure $\rmu$ on $\Sigma$ which is the equilibrium state for a particular sub-additive potential (which depends on $s_0$)  \cite{kaenmaki}, where by ``generically'' we mean in the sense that if a set of linear parts $A_i$ are fixed (where each $\norm{A_i}<1/2$) then the conclusion holds for Lebesgue typical choices of translations $t_i$, see \cite{JPS}. We shall refer to this measure $\rmu$ as a  \emph{K\"aenm\"aki measure}. Piraino proved that if the linear parts $\{A_i\}_{i=1}^N$ generate a strongly irreducible semigroup which contains a matrix with a simple leading eigenvalue, then \eqref{eq:decay} holds for the K\"aenm\"aki measure $\rmu$, see \cite[Theorem 3.3]{piraino}. Therefore, our results are applicable to the reversed  K\"aenm\"aki measure $\mu$, which is a natural candidate that we expect to optimise the cover time of the chaos game on $E$. If we additionally assume that $s_0 \leq 1$ then $\mu$ does indeed optimise the cover time of the chaos game, i.e. $\dim_{\mathrm{M}} \rnu=\dim_{\mathrm{M}} E=s_0$. To see this, note that under the assumptions of $\|A_i\|<1/2$, $\dim_{\mathrm{M}} E=s_0$ for Lebesgue almost every translations, see \cite{falconer_1988}, therefore $\dim_{\mathrm{M}} \rnu \geq s_0$. On the other hand, the strong irreducibility implies that $\rmu$ satisfies a type of ``Gibbs property'' for the weighted norm potential \cite[Remark~4.2]{KaeMor}, and using this and the definition of the Minkowski dimension, it is not difficult to show that $\dim_{\mathrm{M}} \rnu \leq s_0$. As a result, $\dim_{\mathrm{M}} \rnu=\dim_{\mathrm{M}} E=s_0$, i.e. $\mu$ optimises the cover time. Since $\dim_{\rm M} E=s_0$ under the assumptions of strong irreducibility and strong open set condition for planar systems, see \cite{bhr}, we can repeat the argument above for that situation as well.
%%%%%%%%%%%%%%%%%%%%%%%%%%%%%%%%%%%%%%%%%%%%%%%%%%%%%%%%%%%%%%%%%%%%%%%%%%%%%%%%%%%%%%%%%%%%%%%%%%
%\subsection{Approximating the attractor with a given resolution} \label{sec:seflsimSimu}

Leaving the self-similar setting, except for special cases, the construction of the $r$-packing and keeping track of the orbit is difficult. One such special setting is the family of planar self-affine carpets. We continue by introducing the class of Bedford--McMullen carpets, give a complete characterisation of the vectors $\mathbf{p}$ which minimise $\dim_{\rm M}\nu_{\mathbf{p}}$ in Theorem~\ref{thm:3}, and later in Section~\ref{sec:simulations} present how to keep track of the orbit on the appropriately defined symbolic $r$-packing together with some simulations.

%%%%%%%%%%%%%%%%%%%%%%%%%%%%%%%%%%%%%%%%%%%%%%%%%%%%%%%%%%%%%%%%%%%%%%%%%%%%%%%%
%%%%%%%%%%%%%%%%%%%%%%%%%%%%%%%%%%%%%%%%%%%%%%%%%%%%%%%%%%%%%%%%%%%%%%%%%%%%%%%%

\subsection{Application to Bedford--McMullen carpets}\label{sec:IntroBMCarpets}

Bedford--McMullen carpets are self-affine sets on the plane introduced independently by Bedford~\cite{Bedford84_phd} and McMullen~\cite{mcmullen84}. There is an abundant amount of literature on them, we refer to the recent survey~\cite{fraser_BMcarpetSurvey_20arxiv} and references therein, thanks to the simplicity of their construction and at the same time exhibiting many interesting features. Here we show another such interesting feature regarding the optimisation problem
\begin{equation}\label{eq:01}
	\min_{\mathbf{p}} \dim_{\mathrm{M}}\nu_{\p}.
\end{equation}
As discussed in Remark~\ref{rem:0}, our results are applicable. In fact, an explicit formula is known for the lower dimension of Bedford--McMullen carpets~\cite{Fraser_TAMS2014}. Thus, any vector $\p^*$ minimising $\dim_{\mathrm{M}}\nu_{\p}$ has the interpretation that the chaos game run with $\p^*$ has the fastest running time among Bernoulli measures to reach a certain resolution.

Split $R=[0,1]^2$ into $m$ columns of equal width and $n$ rows of equal height for some integers $n>m\geq 2$ and consider orientation preserving maps on $R$ of the form
\begin{equation*}
	f_{(i,j)}(\underline{x}):= \begin{pmatrix} 1/m & 0 \\ 0 & 1/n \end{pmatrix} \begin{pmatrix} x \\ y \end{pmatrix} + \begin{pmatrix} i/m \\ j/n
	\end{pmatrix}
\end{equation*}
for the index set $(i,j)\in \mathcal{A}\subseteq \{0,\ldots,m-1\}\times\{0,\ldots,n-1\}$. The attractor $\Lambda$ of the IFS $\mathcal{F}=\{f_{(i,j)}\}_{(i,j)\in \mathcal{A}}$ is called a \emph{Bedford--McMullen carpet}, see Figure~\ref{fig:BMCarpets} for three examples. For our purposes it only matters how many maps there are in each column. Therefore, the input parameters of a carpet for us are the following.

Considering all non-empty columns, assume that the number of maps in a column take $M_0$ different values. In ascending order, let $N_1<N_2<\ldots<N_{M_0}$ denote these different values. Moreover, let $R_i$ denote the number of columns with $N_i$ number of maps. If $M_0=1$, then we say that the carpet has \emph{uniform vertical fibres}. The total number of non-empty columns is $M=R_1+\ldots+R_{M_0}\leq m$ and the total number of maps is $N=R_1N_1+\ldots+R_{M_0}N_{M_0}\leq nm$. For a distinguished index $1\leq K\leq M_0$, let $|\mathcal{R}_K|:= R_1+\ldots+R_K$, i.e. the number of columns with at most $N_K$ rectangles, and $\norm{\mathcal{R}^C_K}:=R_{K+1}N_{K+1}+\ldots+R_{M_0}N_{M_0}$, i.e. the total number of rectangles in columns with strictly more than $N_K$ rectangles.

\begin{figure}[h]
	\centering
	\includegraphics[width=0.97\textwidth]{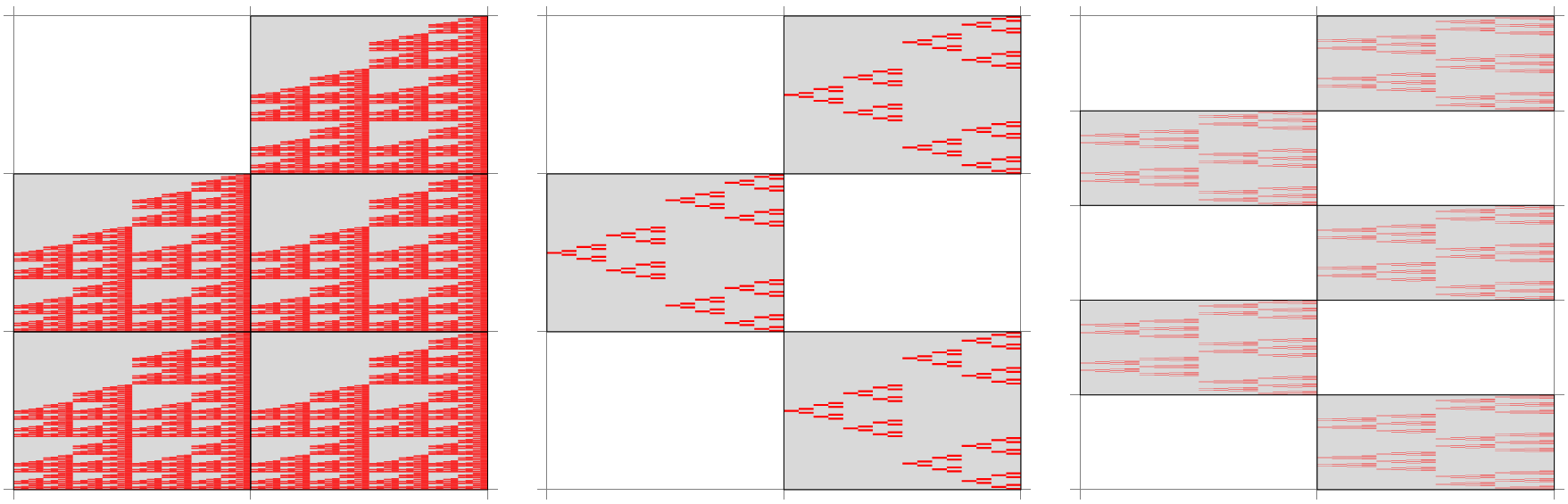}
	\caption{Three Bedford--McMullen carpets (in red) and the images of $[0,1]^2$ under the maps in each IFS (shaded rectangles). In each case, $\dim_{\mathrm{M}}\nu_{\mathbf{p}}$ is minimised by a different vector $\p$, see Table~\ref{table:1}.}
	\label{fig:BMCarpets}
\end{figure}

Recall, every non-degenerate probability vector $\p$ defines the measure $\nu_{\p}$ on $\Lambda$ via~\eqref{eq:invmeasure}. Let $\mathcal{Q}:=\big\{\q=(q_1,\dots,q_{M_0}):\, R_1q_1+\ldots+R_{M_0}q_{M_0}=1 \text{ and } q_k>0 \text{ for all }1\leq k \leq M_0\big\}$ and define the function
\begin{equation}\label{eq:08}
	\alpha(\mathbf{q}):=\max_{1\leq\, k,\ell\,\leq M_0} \left\{ \frac{\log (q_k/N_k)}{-\log n} + \left( 1-\frac{\log m}{\log n}\right) \frac{\log q_{\ell}}{-\log m} \right\}.
\end{equation}

\begin{thm}\label{thm:3}
Let $\Lambda$ be a Bedford--McMullen carpet with non-uniform vertical fibres. Then
\begin{equation*}
\min_{\mathbf{p}} \dim_{\mathrm{M}} \nu_{\mathbf{p}} = \min_{\mathbf{q}\in\mathcal{Q}} \alpha(\mathbf{q}).
\end{equation*}
Moreover, $\min_{\mathbf{q}\in\mathcal{Q}} \alpha(\mathbf{q})$ is attained at a vector either of the form $\mathbf{q}_K=(q_{K,1}, \ldots, q_{K,M_0})$ for a distinguished $1\leq K\leq M_0$ defined by
\begin{equation}\label{eq:04}
	q_{K,k}  =
	\begin{cases}
		N_K\cdot \big(N_K |\mathcal{R}_K|+\norm{\mathcal{R}^C_K}\big)^{-1},& \text{ for all } k\leq K, \\
		N_k\cdot \big(N_K |\mathcal{R}_K|+\norm{\mathcal{R}^C_K}\big)^{-1},& \text{ for all } k > K;
	\end{cases}
	%\frac{N'_K}{N'_K |\mathcal{R}_K|+N(\mathcal{R}^C_K)}\text{ for all } k\leq K \;\text{ and }\; q^*_{K}(k)  = \frac{N'_k}{N'_K |\mathcal{R}_K|+N(\mathcal{R}^C_K)}\text{ for all } k > K.
\end{equation}
or of the form $\mathbf{Q}_K=(Q_{K,1}, \ldots, Q_{K,M_0})$ for a $1\leq K\leq M_0-1$ defined by
\begin{equation}\label{eq:06}
	Q_{K,k}  = \left( 1-\frac{\log m}{\log n}\right)\frac{1}{|\mathcal{R}_K|}\text{ for all } k\leq K \;\;\text{ and }\;\; Q_{K,k}  = \frac{\log m}{\log n}\frac{N_k}{\norm{\mathcal{R}^C_K}}\text{ for all } k > K.
\end{equation}

Furthermore, if $\alpha(\q^*) = \min_{\mathbf{q}\in\mathcal{Q}} \alpha(\mathbf{q})$, then $\q^*$ defines a solution $\p^*$ to~\eqref{eq:01} by defining $p_i^*:= q^*_k/N_k$ if $i$ belongs to a column with $N_k$ rectangles (i.e. mass in each column is distributed uniformly amongst the rectangles within it).
\end{thm}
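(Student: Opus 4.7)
The plan is to first derive a closed-form expression for $\dim_{\mathrm{M}}\nu_{\mathbf{p}}$, then to reduce the optimisation \eqref{eq:01} to a finite-dimensional minimax problem on $\mathcal{Q}$ by two symmetrisation steps, and finally to solve the resulting convex program via KKT. For the first step I would invoke the standard correspondence in a Bedford--McMullen carpet between balls $B(y,n^{-k})\cap\Lambda$ and \emph{approximate squares} of level $k$, which are rectangles of side $\asymp n^{-k}$ depending on the first $k$ symbols of the address in both coordinates and on the horizontal part of the next $\ell_k-k$ symbols, where $\ell_k:=\lfloor k\log n/\log m\rfloor$. Such an approximate square at address $\mathbf{i}\in\Sigma$ has
\[
\nu_{\mathbf{p}}(Q_{\mathbf{i},k})=\prod_{t=1}^{k}p_{(i_t,j_t)}\cdot\prod_{t=k+1}^{\ell_k}\pi_1(\mathbf{p})_{i_t},
\]
where $\pi_1(\mathbf{p})_i:=\sum_{j:(i,j)\in\mathcal{A}}p_{(i,j)}$ is the column marginal. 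Minimising over the address picks independently the worst $(i_t,j_t)\in\mathcal{A}$ in the first block and the worst non-empty column $i_t$ in the second, so by Lemma~\ref{lem:equal}
\[
\dim_{\mathrm{M}}\nu_{\mathbf{p}}=\frac{\log(1/p_{\min})}{\log n}+\Bigl(\frac{1}{\log m}-\frac{1}{\log n}\Bigr)\log(1/\pi_{1,\min}),
\]
with $p_{\min}:=\min_{(i,j)\in\mathcal{A}}p_{(i,j)}$ and $\pi_{1,\min}:=\min_{i}\pi_1(\mathbf{p})_i$ over non-empty columns.

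Next, two symmetrisations of $\mathbf{p}$ only weakly decrease this formula. First, replacing $p_{(i,j)}$ by $\pi_1(\mathbf{p})_i/N_{c(i)}$ (uniform within each column, with $c(i)$ its type) preserves $\pi_1$ and raises $p_{\min}$. Second, replacing $\pi_1(\mathbf{p})_i$ for $i$ of type $k$ by the common value $q_k:=R_k^{-1}\sum_{i\text{ of type }k}\pi_1(\mathbf{p})_i$ reduces $\max_i(-\log\pi_1(\mathbf{p})_i)$ by concavity of $\log$ and simultaneously raises $p_{\min}$, since $\min_k q_k/N_k\ge\min_i\pi_1(\mathbf{p})_i/N_{c(i)}$. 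After both symmetrisations the formula equals $\alpha(\mathbf{q})$, giving $\min_{\mathbf{p}}\dim_{\mathrm{M}}\nu_{\mathbf{p}}=\min_{\mathbf{q}\in\mathcal{Q}}\alpha(\mathbf{q})$, and forcing the optimal $\mathbf{p}^*$ to be uniform within each column as claimed in the last sentence of the theorem.

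For the final step, $\alpha$ is convex in $\mathbf{q}$ (a finite sum of maxima of convex functions) and $\alpha\to+\infty$ as any $q_k\to 0$, so the minimum is attained in the interior of $\mathcal{Q}$ and is characterised by KKT. Setting $A:=\max_k N_k/q_k$ with active set $K_A$ and $B:=\max_k 1/q_k$ with active set $K_B$, stationarity reads
\[
\frac{a_k/\log n+b_k(1/\log m-1/\log n)}{q_k}=\lambda R_k,\qquad a_k,b_k\ge 0,\ \sum_{k\in K_A}a_k=\sum_{k\in K_B}b_k=1,
\]
with $a_k=0$ for $k\notin K_A$ and $b_k=0$ for $k\notin K_B$. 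Summing over $k$ gives $\lambda=1/\log m$ and forces $K_A\cup K_B=\{1,\dots,M_0\}$, since otherwise some $k$ would contribute $0$ to the subdifferential of $\alpha$ while $\lambda R_k>0$. Strict monotonicity of $N_k$ then yields $k\in K_B\setminus K_A\Leftrightarrow N_k<A/B$ and $k\in K_A\setminus K_B\Leftrightarrow N_k>A/B$, so there is a threshold $K$ with either $K_A\cap K_B=\{K\}$ (boundary case, $N_K=A/B$) or $K_A\cap K_B=\emptyset$ (generic case, $N_K<A/B<N_{K+1}$). Solving the normalisation jointly with the KKT equations in each case produces precisely $\mathbf{q}_K$ of \eqref{eq:04} with $A=N_K|\mathcal{R}_K|+\norm{\mathcal{R}_K^C}$ in the boundary case, and $\mathbf{Q}_K$ of \eqref{eq:06} with $B=|\mathcal{R}_K|/(1-\log m/\log n)$ and $A=\norm{\mathcal{R}_K^C}/(\log m/\log n)$ in the generic case.

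I expect the main obstacle to be the case analysis of the final step: verifying that each KKT critical point $\mathbf{q}_K$ or $\mathbf{Q}_K$ actually lies in $\mathcal{Q}$, satisfies the order restriction its case requires (in particular $N_K<A/B<N_{K+1}$ for $\mathbf{Q}_K$), and that the global minimum coincides with the best among the admissible $K$. The non-uniform vertical fibre hypothesis $M_0\ge 2$ is needed precisely here to ensure a non-trivial threshold exists.
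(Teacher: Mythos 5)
Your proposal is correct in substance, and its first two steps coincide with the paper's: the formula for $\dim_{\mathrm{M}}\nu_{\mathbf{p}}$ via approximate squares is the paper's Claim~\ref{claim:2}, and the two averaging (symmetrisation) steps reducing \eqref{eq:01} to $\min_{\mathbf{q}\in\mathcal{Q}}\alpha(\mathbf{q})$ are the paper's Claim~\ref{claim:1}(1). Where you genuinely diverge is the final optimisation: the paper establishes the threshold structure \eqref{eq:05} by an elementary mass-transfer perturbation argument (Claim~\ref{claim:1}(2)) and then, for each threshold $K$, minimises a one-variable function $f_K(q_K)$ over the interval \eqref{range} by computing its derivative, whereas you observe that $\alpha$ is convex and coercive on $\mathcal{Q}$ and run a subdifferential/KKT analysis. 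The two are computationally equivalent --- your active sets $K_A$, $K_B$ and the threshold $A/B$ are exactly the paper's index $K$, the disjoint case $K_A\cap K_B=\emptyset$ recovers $\mathbf{Q}_K$ (the interior critical point $q_K^*=(1-\log m/\log n)/|\mathcal{R}_K|$), and the boundary case $K_A\cap K_B=\{K\}$ recovers $\mathbf{q}_K$ --- and your route arguably delivers the ``moreover'' clause of the theorem more directly, since every minimiser is a KKT point and every KKT point is of one of the two listed forms. What the paper's route buys in addition is the explicit decision rule of Proposition~\ref{prop:opt} (which candidate wins, in terms of $A_K$ versus $N_K,N_{K+1}$); in your framework this corresponds to the sign conditions $a_K,b_K\ge 0$ on the multipliers, which you correctly identify as the remaining bookkeeping.

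Two small points to tighten. First, the ``standard correspondence'' between balls and approximate squares needs both directions: the inequality $\dim_{\mathrm{M}}\nu_{\mathbf{p}}\ge\dim_{\mathrm{M}}\mu_{\mathbf{p}}$ requires that the approximate square of minimal measure \emph{contains} a ball of comparable radius, which the paper proves using that at least two columns are non-empty and one contains at least two maps (guaranteed by non-uniform vertical fibres); this should be said, since $\pi$ need not be injective. Second, your appeal to concavity of $\log$ in the second symmetrisation is unnecessary --- the average of the column masses of a given type is at least their minimum, which is all that is needed to show both $\pi_{1,\min}$ and $p_{\min}$ weakly increase.
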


\begin{rem}\, \label{rem:1}
\begin{enumerate}
\item We give a procedure to determine which vector minimises $\alpha(\q)$ in Proposition~\ref{prop:opt}. For all three examples in Figure~\ref{fig:BMCarpets} a different vector is the minimiser, see Table~\ref{table:1}.
\item A Bedford--McMullen carpet $\Lambda$ has uniform vertical fibres if and only if its Hausdorff and Minkowski dimensions are equal. It is easy to see that in this case the vector minimising  $\dim_{\mathrm{M}}\nu_{\mathbf{p}}$ is the uniform vector $\p^*=(1/N,\ldots,1/N)$. Moreover, $\dim_{\mathrm{M}}\nu_{\mathbf{p}^*}=\dim_{\mathrm{M}}\Lambda$. If $\Lambda$ has non-uniform vertical fibres, then there is a positive gap  $\min_{\mathbf{p}}\dim_{\mathrm{M}}\nu_{\mathbf{p}}>\dim_{\mathrm{M}}\Lambda$. See Claim~\ref{claim:2} for details.
\item The vector minimizing $\dim_{\mathrm{M}}\nu_{\mathbf{p}}$ is not necessarily unique, see Section~\ref{sec:nonuniqueopt}.
\end{enumerate}
\end{rem}

\subsection*{Structure of paper}  In Section~\ref{sec:proofMain1} we prove Theorem~\ref{thm:2}, while Section~\ref{sec:proofExpected} contains the proof of Theorem~\ref{thm:expectedvalue}. The results for Bedford--McMullen carpets are proved in Section~\ref{sec:BMcarpet}. It also contains concrete examples and results of simulations. Section~\ref{sec:6} contains a short list of questions that arise naturally from our investigations.

%%%%%%%%%%%%%%%%%%%%%%%%%%%%%%%%%%%%%%%%%%%%%%%%%%%%%%%%%%%%%%%%%%%%%%%%%%%%%%%%
%%%%%%%%%%%%%%%%%%%%%%%%%%%%%%%%%%%%%%%%%%%%%%%%%%%%%%%%%%%%%%%%%%%%%%%%%%%%%%%%
\section{Proof of Theorem~\ref{thm:2}} \label{sec:proofMain1}

We begin by recalling and introducing necessary notation. Elements of the symbolic space $\Sigma=\{1,\ldots,N\}^{\N}$ are denoted $\ii,\jj$. The set of all finite words is $\Sigma^*=\bigcup_{n=0}^{\infty}\{1,\ldots,N\}^n$, where for $n=0$ we get the empty word and let $\overline{\Sigma}:=\Sigma \cup \Sigma^*$. Elements of $\Sigma^*$ are denoted by $\iiv,\jjv$ or as a truncation $\ii|n=i_1,\ldots i_n$ of an infinite word. For $\iiv=i_1 \ldots i_n$ we let $|\iiv|$ denote the length $n$ of the word $\iiv$. The left shift operator on $\overline{\Sigma}$ is $\sigma$, i.e. $\sigma(i_1i_2i_3\ldots) = i_2i_3\ldots$. The cylinder set defined by the finite word $\iiv$ of length $n$ is $[\iiv] = \{\ii\in\Sigma: \ii|n = \iiv\}$. %Recall, the Bernoulli measure $\mu_{\p}$ given by a non-degenerate probability vector $\p=(p_1,\ldots,p_N)$ is the product measure $\p^\N$, i.e. the measure of any cylinder set $\mu_{\p}\big([\iiv]\big)  = p_{i_1}\cdot \ldots\cdot p_{i_n}$.
The natural projection $\pi:\Sigma\to\Lambda$ is well defined by the limit
\begin{equation*}
\pi(\ii):=\lim_{n\to \infty} f_{\ii|n} (x_0),
\end{equation*}
where the limit is independent of the starting point $x_0$. The map $\pi$ is continuous, surjective, but may fail to be injective.

Let $\iiv_-$ denote the finite word which is obtained from $\iiv$ by dropping the last symbol of $\iiv$. Let
$$
\mathcal{P}_r:=\{\iiv\in\Sigma^*:|f_{\iiv}(\Lambda)|\leq r<|f_{\iiv_-}(\Lambda)|\}.
$$
For much of the time it is enough to work along subsequences of $\{2^{-n}\}_{n\in\N}$. Slightly abusing notation, we write $\mathcal{P}_n:=\mathcal{P}_{2^{-n}}$ and $T_{n}(\ii,x_0):=T_{2^{-n}}(\ii,x_0)$. This should make no confusion, since from the context it should be clear whether the subscript will tend to $\infty$ or $0$. Similarly, let $\mathcal{Q}_r(A)$ and $\mathcal{Q}_n(A)$ denote a maximal packing of $A$ by balls centered in $A$ with radius $r$ and $2^{-n}$ respectively. The definition of the Minkowski dimension implies that there exists $C_0>0$ such that
\begin{equation}\label{eq:boxup}
\#\mathcal{Q}_r(\Lambda)\leq C_0 r^{-2D} \;\;\text{ for every } n,
\end{equation}
where $D=\overline{\dim}_{\rm M}\Lambda$. In particular $\#\mathcal{Q}_n(\Lambda)\leq C_0 2^{2D n}$ for every n.

It readily follows from our assumption~\eqref{eq:condition} that for all $r>0$,
\begin{equation}\label{eq:length'}
L(r):= \frac{\log r}{\log a}-\frac{\log(|\Lambda|/a)}{\log a} \geq \max\{ |\iiv|:\, \iiv\in \mathcal{P}_{r}\}
\end{equation}
In particular for every $n\in\N$
\begin{equation}\label{eq:length}
L(n):= n\frac{\log2}{-\log a}-\frac{\log(|\Lambda|/a)}{\log a} \geq \max\{ |\iiv|:\, \iiv\in \mathcal{P}_{n}\}.
\end{equation}
For $\ii\in\Sigma$, let $\mathcal{P}_r(\ii)$ be the unique element $\iiv\in\mathcal{P}_r$ such that $\ii\in[\iiv]$. Let us define the symbolic ball as
$$
\widetilde{B}(x,r):=\{\iiv\in\mathcal{P}_r:B(x,r)\cap\pi[\iiv]\neq\emptyset\}.
$$
Then
$$
B(x,r)\cap \Lambda\subseteq\pi\widetilde{B}(x,r)\subseteq B(x,2r)\cap\Lambda.
$$
Recall the definition of lower dimension from~\eqref{eq:dimL}. If $d:=\dim_{\rm L}(\Lambda)>0$ then there exists a constant $c_0>0$ such that for every $0<r<R<|\Lambda|$ and $x\in \Lambda$
\begin{equation}\label{eq:lowerdim}
N_r(\Lambda\cap B(x,R))\geq c_0\left(\frac{R}{r}\right)^{d/2}.
\end{equation}

Theorem~\ref{thm:2} follows from the following two propositions.

\begin{prop}\label{prop:lower}
	Suppose that $\dim_{\rm L}\Lambda>0$ and let $\mu$ be an arbitrary $\sigma$-invariant measure. Let $n_k\to \infty$ be a sequence such that the limit
	$$
	\lim_{k\to\infty}\max_{y\in\Lambda}\frac{\log\rnu\big(B\big(y,\left((L(n_k)+2)/c_0\right)^{2/d}2^{-n_k+2}\big)\big)}{-n_k\log2}
	$$
	exists. Let $\alpha$ denote this limit. Then for $\mu$-a.e. $\ii$ and every $x_0\in \Lambda$
	$$
	\liminf_{k\to\infty}\frac{\log T_{n_k}(\ii,x_0)}{n_k\log 2}\geq\alpha.
	$$
\end{prop}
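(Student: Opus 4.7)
The plan is to apply the first Borel--Cantelli lemma to the events $A_k := \{\ii : T_{n_k}(\ii, x_0) \leq 2^{n_k(\alpha - \varepsilon)}\}$ for arbitrary $\varepsilon > 0$; summability of $\mu(A_k)$ in $k$ then yields $T_{n_k} > 2^{n_k(\alpha - \varepsilon)}$ eventually $\mu$-a.s., and letting $\varepsilon \to 0$ through a countable sequence gives the claim. The two ingredients are a packing argument driven by $\dim_{\mathrm{L}}\Lambda > 0$ and a first-moment estimate on the orbit's visits to a distinguished ``hard'' ball.

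First I would, for each $k$, fix a point $y_k \in \Lambda$ attaining the maximum defining $\alpha$ at scale $R_k := ((L(n_k)+2)/c_0)^{2/d}\, 2^{-n_k+2}$, so that $\rnu(B(y_k, R_k)) = 2^{-n_k(\alpha - o_k(1))}$. The hypothesis $d = \dim_{\mathrm{L}}\Lambda > 0$ and~\eqref{eq:lowerdim} produce a packing of $B(y_k, R_k) \cap \Lambda$ by $M_k \geq c_0(R_k/2^{-n_k})^{d/2} = 4^{d/2}(L(n_k)+2)$ pairwise disjoint balls of radius $2^{-n_k}$ centred in $\Lambda$; the inflation factor in $R_k$ is calibrated precisely so that $M_k$ dominates $L(R_k) \lesssim L(n_k)$. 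Since $2^{-n_k}$-density of the orbit forces each sub-ball to receive a visit, one has $\{T_{n_k}(\ii,x_0) \leq t\} \subseteq \{N_k(t) \geq M_k\}$ where $N_k(t) := \#\{0 \leq \ell \leq t : x_\ell \in B(y_k, R_k)\}$.

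Next I would bound $\E_\mu[N_k(t)]$ using $\sigma$-invariance. Writing $x_\ell = \pi(\overleftarrow{\ii|\ell}\,\jj_0)$ with $x_0 = \pi(\jj_0)$ and using the contraction~\eqref{eq:condition}, the event $\{x_\ell \in B(y_k, R_k)\}$ forces $\pi[\overleftarrow{\ii|\ell}] \subseteq B(y_k, R_k + a^\ell|\Lambda|)$, hence
\[
\mu\bigl(\{x_\ell \in B(y_k, R_k)\}\bigr) \leq \rnu\bigl(B(y_k, R_k + a^\ell|\Lambda|)\bigr).
\]
For $\ell \geq L(R_k)$ this is at most $\rnu(B(y_k, 2R_k)) = 2^{-n_k(\alpha - o_k(1))}$ (extracting a further subsequence if necessary to control the doubled scale); for $\ell < L(R_k)$ the radii $R_k + a^\ell|\Lambda|$ descend geometrically from $\approx 2|\Lambda|$ down to $\approx 2R_k$, so the summed contribution is controlled by a geometric-series argument. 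Markov's inequality with $t_k := 2^{n_k(\alpha - \varepsilon)}$ then yields $\mu(A_k) \leq \E_\mu[N_k(t_k)]/M_k$, which the explicit form of the inflation factor is designed to render summable in $k$.

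The main obstacle is the small-$\ell$ portion of $\E_\mu[N_k(t_k)]$: the crude bound of $1$ per term over the first $L(R_k) \sim n_k$ indices would leave the Markov estimate pinned at a positive constant of order $4^{-d/2}$, preventing summability. The delicate geometric-sum argument sketched above---whose convergence requires both the positive lower dimension of $\Lambda$ (to quantify the packing number $M_k$) and the precise inflation factor $((L(n_k)+2)/c_0)^{2/d}$ in the definition of $R_k$ (to match $M_k$ against $L(R_k)$)---is what forces Borel--Cantelli to succeed and is the technical heart of the proof.
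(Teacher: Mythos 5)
Your overall reduction---Borel--Cantelli applied to $A_k=\{T_{n_k}\le 2^{n_k(\alpha-\varepsilon)}\}$, the choice of the minimising point $y_k$ at scale $R_k$, and the use of $\dim_{\mathrm L}\Lambda>0$ to pack $B(y_k,R_k)\cap\Lambda$ with roughly $L(n_k)$ disjoint balls of radius $2^{-n_k}$---matches the paper. The gap is in how you dispose of the initial segment of the orbit. Your Markov bound $\mu(A_k)\le \E_\mu[N_k(t_k)]/M_k$ cannot be made summable: as you yourself note, the first $L(R_k)=L(n_k)+O(\log n_k)$ terms of $\E_\mu[N_k(t_k)]$ admit only the trivial bound $1$ (and can genuinely be of order $1$, e.g.\ when $x_0=y_k$), while $M_k$ is also only of order $L(n_k)$, so the ratio is pinned at a positive constant and $\sum_k\mu(A_k)$ diverges. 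The ``geometric-series'' rescue you invoke does not exist: for $\ell<L(R_k)$ your bound is $\rnu(B(y_k,R_k+a^{\ell}|\Lambda|))$ with radius of order $a^{\ell}|\Lambda|\gg R_k$, but the hypothesis controls $\rnu$ only through $\min_y\rnu(B(y,R_k))$ at the single scale $R_k$; since $\mu$ is an \emph{arbitrary} invariant measure, $y_k$ need not be anywhere near minimising at larger scales, and already $\rnu(B(y_k,2R_k))$ can be of order $1$. (The same objection defeats ``extracting a further subsequence to control the doubled scale'': passing to a subsequence of the $n_k$ does not relate $\rnu(B(y_k,2R_k))$ to $\rnu(B(y_k,R_k))$.) Even if one grants a uniform Frostman bound $\rnu(B(y,\rho))\le C\rho^{s}$, the geometric sum converges to a constant, leaving $\mu(A_k)=O(1/n_k)$, which is still not summable for, say, $n_k=k$.

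The paper uses the $L(n_k)+2$ disjoint sub-balls for a different purpose. Since $\mathcal{O}_{\lceil L(n_k)\rceil}(\ii,x_0)$ consists of at most $L(n_k)+1$ points, for every $\ii$ at least one sub-ball $B(y_k(\ii),2^{-n_k+1})$ is missed by the entire initial segment; hence $\{T_{2^{-n_k}}(\ii,x_0)\le t\}$ forces a visit to that sub-ball at some time $\ell>\lceil L(n_k)\rceil$. One then union-bounds over the polynomially many candidate sub-balls, over the times $\ell\le t_k$, and over the cylinders in $\widetilde{B}(y,2^{-n_k})$; for $\ell>L(n_k)$ the relevant event is a cylinder event that does not involve the coding of $x_0$, so $\sigma$-invariance gives it probability $\mu([\overleftarrow{\iiv}])$, these probabilities sum (by containment of each sub-ball in the big controlled ball, not by inflating radii) to at most $\rnu(B(y_k',R))\le 2^{-n_k(\alpha-1/(2K))}$, and the total is $\mathrm{poly}(n_k)\,2^{-n_k/(2K)}$---exponentially small, hence summable. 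The missing idea in your write-up is therefore to isolate, for each $\ii$, a single target ball untouched by the first $L(n_k)$ steps and to union-bound only over late times, rather than to count visits to the big ball and fight the early contribution with a first-moment estimate.
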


\begin{prop}\label{prop:upper}
	Let $\mu$ be a $\sigma$-invariant measure with exponential decay of correlation. Let $n_k\to\infty$ be a sequence such that the limit
	$$
	\lim_{k\to\infty}\max_{y\in\Lambda}\frac{\log\rnu(B(y,2^{-n_k}))}{-n_k\log2}
	$$
	exists. Let $\alpha$ denote this limit. Then for $\mu$-a.e. $\ii$ and every $x_0\in \Lambda$
	$$
	\limsup_{k\to\infty}\frac{\log T_{n_k}(\ii,x_0)}{n_k\log 2}\leq\alpha.
	$$
\end{prop}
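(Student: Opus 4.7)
The plan is a Borel--Cantelli argument: I bound $\mu(T_{n_k}(\ii,x_0) > T_k)$ by a summable sequence for $T_k$ chosen so that $\log T_k/(n_k\log 2) \to \alpha + \eta$, for an arbitrary fixed $\eta>0$, and then let $\eta\downarrow 0$ along rationals. Fix $\eta > 0$ and set $p := 2^{-n_k(\alpha+\eta)}$. By the hypothesis, $\rnu(B(y, 2^{-n_k})) \geq p$ for all $y \in \Lambda$ and all sufficiently large $k$. Pick a maximal $2^{-n_k-1}$-packing $\mathcal{Q}_k \subset \Lambda$ of cardinality $|\mathcal{Q}_k| \leq C_0\cdot 2^{2D(n_k+1)} = O(2^{2Dn_k})$ by~\eqref{eq:boxup}. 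Since this packing is also a $2^{-n_k-1}$-net, it suffices to show that the orbit enters $B(y, 2^{-n_k-1})$ for every $y \in \mathcal{Q}_k$ within $T_k$ steps.

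The first step is a single-time hitting estimate. Let $L := L(n_k) = O(n_k)$. For $t \geq L$, the event $A_t^y := \{f_{\overleftarrow{\ii|t}}(x_0) \in B(y, 2^{-n_k-1})\}$ is implied by $\overleftarrow{\ii|t}$ beginning with some $\iiv \in \widetilde{B}(y, 2^{-n_k-2})$ drawn from $\mathcal{P}_{n_k+2}$, since then $\pi[\iiv] \subset B(y, 2^{-n_k-1})$. By $\sigma$-invariance of $\mu$,
\begin{equation*}
\mu(A_t^y) \;\geq\; \sum_{\iiv \in \widetilde{B}(y,\, 2^{-n_k-2})}\rmu([\iiv]) \;\geq\; \rnu\bigl(B(y, 2^{-n_k-2})\bigr) \;\geq\; c \cdot p,
\end{equation*}
for some constant $c>0$, provided the limit in the statement holds (up to constant factors) at the slightly finer scale $2^{-n_k-2}$ (the secondary issue flagged below). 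Crucially, $A_t^y$ depends only on the last $L$ symbols of $\ii|t$.

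The second step is to decouple orbit times. Sample the orbit at times $t_j := jWL$ for $j=1,\dots,K$, with a constant integer $W$ to be fixed. Applying \eqref{eq:decay} to complements of cylinders yields the matching lower bound
\begin{equation*}
\mu(B \cap \sigma^{-n}A) \;\geq\; \mu(B)\mu(A) - \kappa\cdot 2^{-\epsilon(n-|B|)}\mu(A),
\end{equation*}
valid when $B$ is a union of length-$|B|$ cylinders. Induction on $j$, applied with $B = \bigcap_{j<K}(A_{t_j}^y)^c$ and $A = A_{t_K}^y$ (separated from $B$ by a gap of $L(W-1)$), gives
\begin{equation*}
\mu\Bigl(\textstyle\bigcap_{j=1}^K(A_{t_j}^y)^c\Bigr) \;\leq\; (1 - cp)^K + \delta \;\leq\; e^{-Kcp} + \delta,\qquad \delta := \kappa\cdot 2^{-\epsilon L(W-1)}.
\end{equation*}
Fix $W$ so large (a constant, since $L = O(n_k)$) that $\delta \leq 2^{-(2D+1)n_k}$, and set $K = \lceil (2D+2)(\log 2) n_k /(cp)\rceil$. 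A union bound over $\mathcal{Q}_k$ gives $\mu(T_{n_k} > t_K) \leq |\mathcal{Q}_k|(e^{-Kcp} + \delta) = O(2^{-n_k})$, summable in $k$. Borel--Cantelli then yields $\mu$-almost surely $T_{n_k}(\ii,x_0) \leq t_K = O(n_k^2 \cdot 2^{n_k(\alpha+\eta)})$ for all $k$ large, so $\limsup_k \log T_{n_k}/(n_k\log 2) \leq \alpha + \eta$; sending $\eta\downarrow 0$ along rationals concludes.

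The main obstacle is the derivation and careful iterative application of the two-sided quantitative mixing bound from the one-sided upper decay \eqref{eq:decay}; this is the probabilistic heart of the argument. A secondary technicality is the mild scale mismatch in the single-hitting estimate (the hypothesis is at scale $2^{-n_k}$, but the bound invokes $\rnu$ at scale $2^{-n_k-2}$): in the application to Theorem~\ref{thm:2} this is resolved by extracting a sub-subsequence of the given $n_k$ along which the relevant limit also holds at a fixed finite number of nearby perturbed scales, absorbing the resulting constants into the prefactor which vanishes upon dividing by $n_k\log 2$.
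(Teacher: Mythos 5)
Your proposal is correct and follows essentially the same route as the paper: reduce the cover time to hitting every ball of a slightly finer maximal packing, lower-bound the single-time hitting probability via the symbolic balls $\widetilde{B}(y,\cdot)$ and the reversed measure $\rmu$, decouple the orbit at sample times separated by a gap of order $n_k$ using the one-sided decay \eqref{eq:decay} (the paper sums the cylinder-level upper bound over all avoiding tuples, where you equivalently derive a two-sided correlation inequality and iterate on complements), and finish with a union bound over the packing and Borel--Cantelli. The two points you flag are handled the same way in the paper (the hitting estimate is taken at scale $2^{-n_k-3}$, and the bound on the failure probability is made decay fast enough that summability and the shift of scale cause no trouble), so only trivial constant adjustments (e.g.\ packing radius $2^{-n_k-2}$ rather than $2^{-n_k-1}$) separate the two write-ups.
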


\begin{proof}[Proof of Theorem~\ref{thm:2}]
  Let $n_k\to\infty$ be a sequence for which
	$$
	\underline{\dim}_{\rm M}(\rnu)=\lim_{k\to \infty} \max_{y\in\Lambda} \frac{\log \rnu(B(y,2^{-n_k}))}{-n_k\log2}.
	$$
	Then by Proposition~\ref{prop:upper} and Lemma~\ref{lem:equal}, for $\mu$-almost every $\ii$ and every $x_0\in\Lambda$
	$$
	\liminf_{n\to\infty}\frac{\log T_{n}(\ii,x_0)}{n\log 2}\leq\liminf_{k\to\infty}\frac{\log T_{n_k}(\ii,x_0)}{n_k\log 2}\leq\underline{\dim}_{\rm M}(\rnu).
	$$
	For an arbitrary $r>0$ there exists a unique integer $n$ such that $2^{-n}\leq r<2^{-(n-1)}$. Hence,
	$$
	\frac{n-1}{n}\frac{\log T_{n-1}(\ii,x_0)}{(n-1)\log 2}\leq\frac{\log T_r(\ii,x_0)}{-\log r}
	$$
	and so $\liminf\limits_{n\to\infty}\frac{1}{n\log 2}\log T_{n}(\ii,x_0)=\liminf\limits_{r\to0}\frac{1}{-\log r}\log T_r(\ii,x_0)$. This gives one direction. On the other hand
	$$
	\liminf_{r\to0}\max_{y\in\Lambda}\frac{\log\rnu(B(y,r))}{\log r}=\liminf_{n\to\infty}\max_{y\in\Lambda}\frac{\log\rnu\big(B\big(y,\left((L(n)+2)/c_0\right)^{2/d}2^{-n+2}\big)\big)}{-n\log2}.
	$$
%noting that since $B(y,r)$ are closed, the map $y \mapsto \rnu(B(y,r))$ is upper semicontinuous hence the maximum $\max_{y\in\Lambda} \rnu(B(y,r))$ exists. 
	By Lemma~\ref{lem:equal}, this common value equals $\underline{\dim}_{\rm M}(\rnu)$. Hence, Proposition~\ref{prop:lower} implies that for $\mu$-a.e. $\ii$ and every $x_0\in\Lambda$
	$$
	\liminf_{n\to\infty}\frac{\log T_{n}(\ii,x_0)}{n\log 2}\geq\underline{\dim}_{\rm M}(\rnu),
	$$
proving the first assertion.	The proof of the other assertion is analogous.
\end{proof}
The next two subsections contain the proofs of the two remaining propositions. Both proofs are Borel-Cantelli arguments.

%%%%%%%%%%%%%%%%%%%%%%%%%%%%%%%%%%%%%%%%%%%%%%%%%%%%%%%%%%%%%%%%%%%%%%%%%%%%%%%%
%%%%%%%%%%%%%%%%%%%%%%%%%%%%%%%%%%%%%%%%%%%%%%%%%%%%%%%%%%%%%%%%%%%%%%%%%%%%%%%%
\subsection{Lower bound, proof of Proposition~\ref{prop:lower}}

In this proof we will estimate the cover time $T_n$ from below in terms of the time to hit the ``least accessible'' part of $\Lambda$ (i.e. the ball of minimum measure at scale $2^{-n}$). Moreover, we will use the positive lower dimension assumption to deduce that there are sufficiently many small balls of measure comparable to the ball of minimum measure, so that the choice of initial point for the chaos game cannot cause the cover time to drop by beginning in the least accessible part of $\Lambda$.

 Recall the notations $L(n)$ from~\eqref{eq:length} and $c_0$ from~\eqref{eq:lowerdim}. The first time an orbit hits a ball around a point $y$ with radius $r$ is denoted by
\begin{equation}\label{eq:tiyx}
\mathcal{T}_r(\ii,y,x_0):= \min\{n\geq 1: f_{\overleftarrow{\ii|n}}(x_0)\in B(y,r)\}.
\end{equation}
Observe that $\mathcal{T}_r(\ii,y,x_0)\leq T_r(\ii,x_0)$ for every $y\in\Lambda$. Let $n_k\to\infty$ be a sequence such that
\begin{equation*}
\alpha := \lim_{k\to\infty}\max_{y\in\Lambda}\frac{\log\rnu\big(B\big(y,\left((L(n_k)+2)/c_0\right)^{2/d}2^{-n_k+2}\big)\big)}{-n_k\log2},
\end{equation*}
moreover, let $y_k' \in \Lambda$ be a point for which the maximum is attained for $n_k$. Therefore, for every integer $K\geq1$, we can find $N(K)$  such that
	\begin{equation} \label{measure-ub}
	\rnu\big(B\big(y_k',\left((L(n_k)+2)/c_0\right)^{2/d}2^{-n_k+2}\big)\big) \leq 2^{-n_k(\alpha-\frac{1}{2K})} \quad\text{ for every } k\geq N(K).
	\end{equation}
	
Choosing $r=2^{-n_k+2}$ and $R=((L(n_k)+2)/c_0)^{2/d}\cdot r$ and letting $D$ denote the dimension of the state space $\R^D$ containing $\Lambda$, we see that
\begin{equation}\label{eq:onlower}
 L(n_k)+2 \leq N_{r}\big(\Lambda\cap B\big(y_k',R\big)\big)\leq \#\mathcal{Q}_{n_k-1}\big(\Lambda\cap B\big(y_k',R\big)\big) \leq \left(\frac{2R}{r}\right)^D=2^D((L(n_k)+2)/c_0)^{2D/d},
\end{equation}
where the upper bound is trivial and the lower bound follows from~\eqref{eq:lowerdim}. Hence, for each $\ii \in \Sigma$ there exists $y_k(\ii)\in\Lambda$ such that $B(y_k(\ii),2^{-n_k+1})\cap\mathcal{O}_{\left\lceil L(n_k)\right\rceil}(\ii,x_0)=\emptyset$ and
\begin{equation*}
B(y_k(\ii),2^{-n_k+1})\subset B\big(y_k',\left((L(n_k)+2)/c_0\right)^{2/d}2^{-n_k+2}\big).
\end{equation*}
%Note that $y_k(\ii)$ depends only on the first $\left\lceil L(n_k)\right\rceil$ symbols of $\ii$.

%Let $\mathcal{Y}_k$ denote the set of centres of the balls in $\mathcal{Q}_{n_k-1}\big(\Lambda\cap B\big(y_k',R\big)\big)$.
	
	Let $\jj$ be an arbitrary coding of $x_0$. Then,
	
	%Let $l_k$ be the unique integer such that $\overleftarrow{\ii|l_k}\in\mathcal{P}_{n_k}$.
	\begin{align}
	&\mu\left(\left\{\ii:T_{2^{-n_k}}(\ii,x_0)<2^{n_k(\alpha-1/K)}\right\}\right)\nonumber\\
	&\qquad\leq\mu\left(\left\{\ii:\mathcal{T}_{2^{-n_k}}(\ii,y_k(\ii),x_0)<2^{n_k(\alpha-1/K)}\right\}\right)\nonumber\\
	&\qquad\leq\mu\left(\left\{\ii:\text{ there exists $ \lceil L(n_k)\rceil+1\leq\ell<2^{n_k(\alpha-1/K)}$ s.t. }f_{\overleftarrow{\ii|\ell}}(x_0)\in B(y_k(\ii),2^{-n_k})\right\}\right)\nonumber\\
	&\qquad\leq\mu\left(\left\{\ii:\text{ there exists $ \lceil L(n_k)\rceil +1\leq\ell<2^{n_k(\alpha-1/K)}$ s.t. }\mathcal{P}_{n_k}(\overleftarrow{\ii|\ell}\jj)\in\widetilde{B}(y_k(\ii),2^{-n_k})\right\}\right)\label{a1}
\end{align}
where the first inequality follows because for all $\ii \in \Sigma$, $T_{2^{-n_k}}(\ii,x_0)\geq\mathcal{T}_{2^{-n_k}}(\ii,y_k(\ii),x_0)$, the second inequality is because for all $\ii \in \Sigma$, $y_k(\ii)$ is chosen such that $B(y_k(\ii),2^{-n_k+1})\cap\mathcal{O}_{\left\lceil L(n_k)\right\rceil}(\ii,x_0)=\emptyset$ and the final inequality is because $B(y_k(\ii),2^{-n_k}) \subseteq \widetilde{B}(y_k(\ii),2^{-n_k})$.

Next, let $\mathcal{Y}_k$ denote the set of centres of the balls in $\mathcal{Q}_{n_k-1}\big(\Lambda\cap B\big(y_k',R\big)\big)$. The subset of $\Sigma$ that appears in \eqref{a1} is easily seen to be contained inside
$$ \bigcup_{y \in \mathcal{Y}_k} \bigcup_{\ell=\lceil L(n_k)\rceil+1}^{\lfloor 2^{n_k(\alpha-1/K)} \rfloor}\bigcup_{\iiv\in\widetilde{B}(y,2^{-n_k})}\left\{\ii: \mathcal{P}_{n_k}(\overleftarrow{\ii|\ell}\jj)=\iiv\right\}.$$
Thus we can bound \eqref{a1} by sum of measures of sets appearing in the above union to obtain that for $k \geq N(K)$

\begin{align*}
	\mu\left(\left\{\ii:T_{2^{-n_k}}(\ii,x_0)<2^{n_k(\alpha-1/K)}\right\}\right) &\leq\sum_{y\in\mathcal{Y}_k}\sum_{\ell=\lceil L(n_k)\rceil+1}^{\lfloor 2^{n_k(\alpha-1/K)}\rfloor}\sum_{\iiv\in\widetilde{B}(y,2^{-n_k})}\mu\left(\left\{\ii:\mathcal{P}_{n_k}(\overleftarrow{\ii|\ell}\jj)=[\iiv]\right\}\right) \\
	&\leq 2^{n_k(\alpha-1/K)} \sum_{y \in \mathcal{Y}_k}\sum_{\iiv\in\widetilde{B}(y,2^{-n_k})}\mu([\overleftarrow{\iiv}])\\
	&\leq 2^{n_k(\alpha-1/K)} \sum_{y \in \mathcal{Y}_k} \rnu(B(y,2^{-n_k+1}))\\
	&\leq2^D((L(n_k)+2)/c_0)^{2D/d}2^{n_k(\alpha-1/K)}2^{-n_k(\alpha-1/(2K))}\\
	&=2^D((L(n_k)+2)/c_0)^{2D/d}2^{-n_k/(2K)}. \nonumber
	\end{align*}
The second inequality follows because $\mathcal{P}_{n_k}(\overleftarrow{\ii|\ell}\jj)$ depends only on $\ii$ for $\ell \geq \lceil L(n_k)\rceil$ by \eqref{eq:length}. The third inequality follows because $\mu([\overleftarrow{\iiv}])=\rmu([\iiv])$ and $\pi \widetilde{B}(y,2^{-n_k}) \subseteq B(y,2^{-n_k+1})$. The fourth inequality follows by~\eqref{measure-ub},  \eqref{eq:onlower}, and the fact that for any $y \in \mathcal{Y}_k$, $B(y,2^{-n_k+1})\subset B\big(y_k',\left((L(n_k)+2)/c_0\right)^{2/d}2^{-n_k+2}\big)$.
Thus, the Borel-Cantelli lemma implies that
	$$
	\mu\left(\left\{\ii:T_{2^{-n_k}}(\ii,x_0)<2^{n_k(\alpha-1/K)}\text{ for infinitely many $k$'s}\right\}\right)=0.
	$$
	Since $K\geq1$ was arbitrary, we get
	$$
	\mu\left(\left\{\ii:\liminf_{k\to\infty}\frac{\log T_{2^{-n_k}}(\ii,x_0)}{n_k\log2}\geq\alpha\right\}\right)=1.
	$$

\subsection{Notes on positive lower dimension}\label{sec:poslower} Let us observe that the condition $\dim_L\Lambda>0$ is purely technical and used only to show the independence of the initial point $x_0\in\Lambda$ by providing sufficiently large collection of balls with approximately the smallest possible measure. However, this condition can be circumvented by some other conditions. For example, the lower dimension of the attractor of the system $\{f_1(x)=x^{1/x}/3,f_2(x)=(x+2)/3\}$ is $0$, see \cite[Section~6.3]{fraser_2020Book} but the claim of Proposition~\ref{prop:lower}, and in particular Theorem~\ref{thm:1}, holds for this system as well with measures with one-sided exponential decay.

\begin{prop}\label{prop:lower2}
Let $\mathcal{F}$ be an IFS satisfying~\eqref{eq:condition} and the strong separation property. Moreover, let us assume that there exist $\iiv\neq\jjv\in\Sigma^*$ and a constant $0<b<1$ such that 
$$
\|f_{\iiv}(x)-f_{\iiv}(y)\|,\|f_{\jjv}(x)-f_{\jjv}(y)\|>b\|x-y\|\text{ for all }x,y\in\Lambda.
$$
 Let $\mu$ be a $\sigma$-invariant measure such that there exists a constant $C>0$ such that for every $\iiv,\jjv\in\Sigma^*$
 \begin{equation}\label{eq:quasiBernoulli}
 \mu([\iiv\jjv])\leq C\mu([\iiv])\mu([\jjv]).
 \end{equation}
 Let $n_k\to \infty$ be a sequence such that the limit
	$$
	\lim_{k\to\infty}\max_{y\in\Lambda}\frac{\log\rnu\big(B\big(y,b^{-1}L(n_k)^{-\log b}2^{-n_k}\big)\big)}{-n_k\log2}
	$$
	exists. Let $\alpha$ denote this limit. Then for $\mu$-a.e. $\ii$ and every $x_0\in \Lambda$
	$$
	\liminf_{k\to\infty}\frac{\log T_{n_k}(\ii,x_0)}{n_k\log 2}\geq\alpha.
	$$
\end{prop}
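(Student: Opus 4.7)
The plan is to mimic the Borel--Cantelli scheme of Proposition~\ref{prop:lower}, replacing the geometric input~\eqref{eq:lowerdim} (which produced many disjoint $2^{-n_k}$-balls inside a single ball around the least accessible point) by a \emph{dynamical} input: iterates of the two expanding branches $f_{\iiv}, f_{\jjv}$ produce many well-separated copies of the least accessible point spread across $\Lambda$, and the quasi-Bernoulli property~\eqref{eq:quasiBernoulli} transports the small-measure estimate along each of them. After possibly replacing $\iiv,\jjv$ by $\iiv\jjv,\jjv\iiv$ (and $b$ by $b^2$) we may assume $|\iiv|=|\jjv|=:m$. Set $\bar\delta:=d(f_\iiv(\Lambda),f_\jjv(\Lambda))>0$, which is positive by the strong separation property, and let $y_k'\in\Lambda$ attain the maximum in the expression defining $\alpha$, so that $\rnu(B(y_k',R_k))=2^{-n_k(\alpha+o(1))}$, where $R_k:=b^{-1}L(n_k)^{-\log b}\,2^{-n_k}$.

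Choose $\ell=\ell_k\asymp\log L(n_k)$ so that $2^{\ell_k}\ge L(n_k)+2$ and simultaneously $2^{-n_k}/b^{\ell_k}$ is of order $R_k$; the explicit form of $R_k$ is exactly what makes these two requirements compatible. For each $\kk\in\{\iiv,\jjv\}^{\ell_k}$ set $y_\kk:=f_\kk(y_k')\in f_\kk(\Lambda)$. If $\kk\neq\kk'\in\{\iiv,\jjv\}^{\ell_k}$ agree on their first $j<\ell_k$ blocks then the composition $f_{\kk_1}\circ\cdots\circ f_{\kk_j}$ has lower Lipschitz constant $\ge b^{j}$, so
\[
d(y_\kk,y_{\kk'})\ge b^{j}\bar\delta\ge b^{\ell_k-1}\bar\delta.
\]
Since $b^{\ell_k-1}\bar\delta$ is only polynomially small in $L(n_k)$ whereas $2^{-n_k}$ is exponentially small, the balls $B(y_\kk,2^{-n_k})$ are pairwise disjoint for all large $k$, and a similar comparison with the SSP gap at level $\ell_k m$ shows $B(y_\kk,2^{-n_k})\cap\Lambda\subseteq f_\kk(\Lambda)$ for all large $k$.

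Measure transport is then immediate: for any $\ii\in\pi^{-1}(B(y_\kk,2^{-n_k}))\subseteq[\kk]$, writing $\ii=\kk\ii'$ and applying the lower Lipschitz bound on $f_\kk$ gives $\pi(\ii')\in B(y_k',R_k)$ up to an absolute constant absorbed into $R_k$. Since~\eqref{eq:quasiBernoulli} passes to $\rmu$ by reversal, a standard cylinder approximation produces
\[
\rnu(B(y_\kk,2^{-n_k+1}))\le C\,\rmu([\kk])\,\rnu(B(y_k',R_k))\le C\,\rmu([\kk])\cdot 2^{-n_k(\alpha-1/(2K))}
\]
for any fixed $K$ and all large $k$. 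Summing over $\kk$ and using $\sum_{\kk}\rmu([\kk])\le1$ (the cylinders $[\kk]$ are disjoint) yields $\sum_\kk\rnu(B(y_\kk,2^{-n_k+1}))\le C'\cdot 2^{-n_k(\alpha-1/(2K))}$.

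For the Borel--Cantelli closure, $\mathcal{O}_{\lceil L(n_k)\rceil}(\ii,x_0)$ contains at most $L(n_k)+1$ points while we have $2^{\ell_k}\ge L(n_k)+2$ pairwise disjoint bad balls, so for every $\ii$ there is a choice $\kk(\ii)$ whose ball is missed during the first $\lceil L(n_k)\rceil$ steps. Reproducing the chain of inequalities from the proof of Proposition~\ref{prop:lower}---the independence of $\mathcal{P}_{n_k}(\overleftarrow{\ii|\ell}\jj)$ from the suffix of $\ii$ for $\ell\ge\lceil L(n_k)\rceil$ being guaranteed by~\eqref{eq:length}---gives
\[
\mu\bigl\{\ii:T_{2^{-n_k}}(\ii,x_0)<2^{n_k(\alpha-1/K)}\bigr\}\le 2^{n_k(\alpha-1/K)}\sum_{\kk}\rnu\bigl(B(y_\kk,2^{-n_k+1})\bigr)\le C''\,2^{-n_k/(2K)},
\]
which is summable in $k$; Borel--Cantelli followed by $K\to\infty$ concludes. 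The main obstacle is the geometric comparison underpinning both the pairwise disjointness of the $B(y_\kk,2^{-n_k})$ and the containment $B(y_\kk,2^{-n_k})\cap\Lambda\subseteq f_\kk(\Lambda)$: one must verify that the exponentially small radius $2^{-n_k}$ lies below both $b^{\ell_k-1}\bar\delta$ and the (a priori only polynomially small) SSP gap at level $\ell_k m$. Calibrating these two scales is exactly what dictates the polynomial correction $L(n_k)^{-\log b}$ appearing in $R_k$.
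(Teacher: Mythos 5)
Your argument is correct and is essentially the paper's own proof: there too one takes the $2^{m_k}$ points $f_{\hbar}(y_k')$ for $\hbar\in\{\iiv,\jjv\}^{m_k}$ with $m_k\approx\log L(n_k)$, checks that the balls $B(f_{\hbar}(y_k'),2^{-n_k})$ are pairwise disjoint by comparing $2^{-n_k}$ with $b^{m_k}\,d(f_{\iiv}(\Lambda),f_{\jjv}(\Lambda))$, transports the small-measure estimate via $\rnu(f_{\hbar}(E))\le C\rnu(E)$ (a consequence of the quasi-Bernoulli property of $\rmu$ together with the strong separation), and closes with the Borel--Cantelli computation of Proposition~\ref{prop:lower}. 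One small point of precision on your ``main obstacle'': the quantity you need is not the minimal gap between arbitrary level-$\ell_k m$ cylinders (which has no a priori polynomial lower bound, since the generic maps $f_i$ carry no lower Lipschitz estimate), but only the gap between $f_{\kk}(\Lambda)$ and $\Lambda\setminus f_{\kk}(\Lambda)$ for block words $\kk\in\{\iiv,\jjv\}^{\ell_k}$, which the block-by-block comparison you describe does bound below by $b^{\ell_k-1}\min\bigl\{d(f_{\iiv}(\Lambda),\Lambda\setminus f_{\iiv}(\Lambda)),\,d(f_{\jjv}(\Lambda),\Lambda\setminus f_{\jjv}(\Lambda))\bigr\}$, so your calibration is sound.
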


\begin{proof}
Let $y_k'\in\Lambda$ be where $\max_{y\in\Lambda}\frac{\log\rnu\big(B\big(y,L(n_k)^{-\log b}2^{-n_k}\big)\big)}{-n_k\log2}$ is attained. By the strong separation, there exists $\delta>0$ such that $d(f_{\iiv}(\Lambda),f_{\jjv}(\Lambda))>\delta$. Let $\hbar_1,\ldots,\hbar_{2^{m_k}}\in\{\iiv,\jjv\}^{m_k}$ be words such that $\log L(n_k)\leq m_k<\log L(n_k)+1$. By the strong separation
$$
B(f_{\hbar_i}(y_k'),2^{-n_k})\cap B(f_{\hbar_j}(y_k'),2^{-n_k})=\emptyset
$$
for every $i\neq j$ and $k$ sufficiently large. Indeed, if the intersection above was non-empty then 
$$
2^{-n_k+1}\geq\|f_{\hbar_i}(y_k')-f_{\hbar_j}(y_k')\|\geq b^{m_k}\delta\geq b L(n_k)^{-\log b}\delta,
$$
which is absurd as $k\to\infty$. Hence, by the definition of $m_k$ for every $\ii\in\Sigma$ there exists $y_k(\ii)\in\{f_{\hbar_i}(y_k')\}_{i=1}^{2^{m_k}}$ such that $B(y_k(\ii),2^{-n_k})\cap\mathcal{O}_{\left\lceil L(n_k)\right\rceil}(\ii,x_0)=\emptyset$.

It is easy to see that $\rmu$ satisfies \eqref{eq:quasiBernoulli} too, and by using the strong separation, we get that for every $\rnu$-measurable set $E$ and finite word $\hbar\in\Sigma_*$
\begin{equation}\label{eq:technu}
\rnu(f_{\hbar}(E))=\rmu(\sigma^{-|\hbar|}\pi^{-1}(E)\cap[\hbar])\leq C\rmu(\sigma^{-|\hbar|}\pi^{-1}(E))\rmu([\hbar])\leq C\rmu(\pi^{-1}(E))=C\rnu(E).
\end{equation}

Moreover, by our assumptions on $f_{\iiv},f_{\jjv}$
$$
B(f_{\hbar_i}(y_k'),2^{-n_k})\subseteq f_{\hbar_i}\left(B(y_k',b^{-m}2^{-n_k})\right)\subseteq f_{\hbar_i}\left(B\big(y_k',b^{-1}L(n_k)^{-\log b}2^{-n_k}\big)\big)\right),
$$
for all $i=1,\ldots,2^m$. Hence, by \eqref{eq:technu}
$$
\rnu\left(f_{\hbar_i}\left(B\big(y_k',b^{-1}L(n_k)^{-\log b}2^{-n_k}\big)\big)\right)\right)\leq C\rnu\left(B\big(y_k',b^{-1}L(n_k)^{-\log b}2^{-n_k}\big)\big)\right)
$$
and so, for any $K\geq1$ and every sufficiently large $k$
$$
\frac{\log\rnu(B(f_{\hbar_i}(y_k'),2^{-n_k}))}{-n_k\log 2}\geq\frac{\log\rnu\left(B\big(y_k',b^{-1}L(n_k)^{-\log b}2^{-n_k}\big)\big)\right)}{-n_k\log2}+\frac{\log C}{-n_k\log2}\geq\alpha-\frac{1}{2K}.
$$
Now, the proof now can be finished as in Proposition~\ref{prop:lower} and left for the reader.
\end{proof}

Observe that the strong separation played a crucial role in the Proof of Proposition~\ref{prop:lower2}. Since in the overlapping case it might happen that \eqref{eq:technu} fails and the $\rnu$ measure of $f_{\hbar}(B(y,r))$ is much larger than the measure of the set $B(y,r)$.

%%%%%%%%%%%%%%%%%%%%%%%%%%%%%%%%%%%%%%%%%%%%%%%%%%%%%%%%%%%%%%%%%%%%%%%%%%%%%%%%
%%%%%%%%%%%%%%%%%%%%%%%%%%%%%%%%%%%%%%%%%%%%%%%%%%%%%%%%%%%%%%%%%%%%%%%%%%%%%%%%
\subsection{Upper bound, proof of Proposition~\ref{prop:upper}}

The upper bound is also a Borel-Cantelli argument, and for this we will need to estimate the probability that the cover time is asymptotically larger than what is claimed. In order to estimate this probability, we will use the one-sided exponential decay of correlations assumption to allow us to consider subsequent visits independently of one another.

Let $n_k \to \infty$ be a sequence such that
	$$
	\alpha:=\lim_{k\to\infty}\max_{y\in\Lambda}\frac{\log\rnu(B(y,2^{-n_k}))}{-n_k\log2}.
	$$
	Since
	$$
	\lim_{k\to\infty}\max_{y\in\Lambda}\frac{\log\rnu(B(y,2^{-(n_k+3)}))}{-n_k\log2}=\alpha
	$$
we can, for every $K \geq 1$, define $N(K)$ such that
\begin{equation} \label{measure-lb}
\min_{y\in\Lambda}\rnu(B(y,2^{-(n_k+3)}))\geq 2^{-n_k(\alpha+1/(2K))}
\end{equation}
for every $k\geq N(K)$.

	Let us consider the packing $\mathcal{Q}_{n_k+2}(\Lambda)$ of $\Lambda$ with balls of radius $2^{-n_k-2}$. Let $\mathcal{Y}_k$ denote the centres of balls in the packing $\mathcal{Q}_{n_k+2}$. Define
\begin{equation*}
t_{n_k}(\ii):= \min\big\{ m\geq 1:\, (\forall B\in\mathcal{Q}_{n_k+2})\,(\exists \ell\leq m) \text{ such that } f_{\overleftarrow{\ii|\ell}}(x_0)\in B\big\}.
\end{equation*}
First notice that for any $\ii \in \Sigma$, $T_{2^{-n_k}}(\ii,x_0)\leq t_{n_k}(\ii)$. Indeed, since $\{2B\}_{B\in\mathcal{Q}_{n_k+2}}$ is a cover of $\Lambda$, for every $x \in \Lambda$ there exists $B\in\mathcal{Q}_{n_k+2}$ such that $x\in 2B$ and, by definition of $t_{n_k}(\ii)$, there exists $1\leq\ell\leq t_{n_k}(\ii)$ such that $f_{\overleftarrow{\ii|\ell}}(x_0)\in B$. In particular $\|x-f_{\overleftarrow{\ii|\ell}}(x_0)\|\leq 2^{-n_k-1}+2^{-n_k-2}<2^{-n_k}$.

For short, let
$$m_k:=\big\lceil L(n_k+3)+\frac{(\alpha+1)}{\varepsilon}n_k \big\rceil=\big\lceil (n_k+3)\frac{\log2}{-\log a}-\frac{\log(|\Lambda|/a)}{\log a}+\frac{(\alpha+1)}{\varepsilon}n_k\big\rceil,$$
where $\varepsilon>0$ is defined in \eqref{eq:decay}.  Let $\jj$ be an arbitrary coding of $x_0$.

%By \eqref{eq:length} and \eqref{eq:decay}, for every $\ii_1,\ii_2\in\Sigma$\begin{equation}\mu(\sigma^{-m_k}[\mathcal{P}_{2^{-n_k-2}}(\ii_1)]\cap[\mathcal{P}_{2^{-n_k-2}}(\ii_2)])\leq (1+2^{-(\alpha+1)n_k})\mu([\mathcal{P}_{2^{-n_k-2}}(\ii_1)])\mu([\mathcal{P}_{2^{-n_k-2}}(\ii_2)]).\end{equation}

Then
\begin{align}
&\mu\left(\left\{\ii:T_{2^{-n_k}}(\ii,x_0)\geq \lceil 2^{(\alpha+1/K)n_k}\rceil m_k\right\}\right)\nonumber\\
&\quad\leq\mu\left(\left\{\ii:t_{n_k}(\ii,x_0)\geq \lceil 2^{(\alpha+1/K)n_k}\rceil m_k\right\}\right)\nonumber\\
&\quad\leq\mu\Big(\big\{\ii:\exists y\in\mathcal{Y}_k \text{ s.t. }  f_{\overleftarrow{\i|\ell m_k}}(x_0) \notin B(y, 2^{-n_k-2})\text{ for every $1\leq\ell\leq \lceil2^{(\alpha+1/K)n_k}\rceil$}\big\}\Big)\nonumber\\
&\quad\leq\mu\Big(\big\{\ii:\exists y\in\mathcal{Y}_k \text{ s.t. }\mathcal{P}_{2^{-n_k-3}}(\overleftarrow{\ii|\ell m_k}\jj)\notin \widetilde{B}(y, 2^{-n_k-3})\text{ for every $1\leq\ell\leq \lceil2^{(\alpha+1/K)n_k}\rceil$}\big\}\Big),\label{b1}
\end{align}
where the first inequality is because $T_{2^{-n}}(\ii,x_0)\leq t_{n}(\ii)$ and the final inequality follows because $\pi\widetilde{B}(y, 2^{-n_k-3}) \subseteq B(y, 2^{-n_k-2})$. Note that since $m_k\geq L(n_k+3)$, it follows that for any $\ii \in \Sigma$, $\mathcal{P}_{2^{-n_k-3}}(\overleftarrow{\ii| m_k} \jj)$ depends only on $\ii$.

Next, observe that by \eqref{eq:decay}, \eqref{eq:length} and the fact that $m_k-L(n_k+3) \geq \frac{\alpha+1}{\epsilon} n_k$ then for any $M \in \N$ and $\iiv_1, \ldots, \iiv_M \in \mathcal{P}_{n_k+3}$,
\begin{multline*}
\mu\left(\sigma^{-(m_k-|\iiv_1|)}([\overleftarrow{\iiv_1}]) \cap \sigma^{-(2m_k-|\iiv_2|)}([\overleftarrow{\iiv_2}]) \cap\cdots \cap \sigma^{-(Mm_k-|\iiv_M|)}([\overleftarrow{\iiv_M}]) \right) \\
 \leq (1+2^{-(\alpha+1)n_k}\kappa)^M\mu([\overleftarrow{\iiv_1}]) \cdots \mu([\overleftarrow{\iiv_M}]).
\end{multline*}

Applying this to \eqref{b1} we obtain that

\begin{align}
&\mu\left(\left\{\ii:T_{2^{-n_k}}(\ii,x_0)\geq \lceil 2^{(\alpha+1/K)n_k}\rceil m_k\right\}\right)\nonumber\\
&\qquad\leq (1+2^{-(\alpha+1)n_k}\kappa)^{\lceil 2^{(\alpha+1/K)n_k}\rceil}
\sum_{y \in \mathcal{Y}_k}\bigg(1-\sum_{\iiv\in\widetilde{B}(y, 2^{-n_k-3})}\mu([\overleftarrow{\iiv}])\bigg)^{\lceil 2^{(\alpha+1/K)n_k}\rceil}\nonumber\\
&\qquad\leq (1+2^{-(\alpha+1)n_k}\kappa)^{\lceil 2^{(\alpha+1/K)n_k}\rceil}\sum_{y \in \mathcal{Y}_k}\left(1-\rnu(B(y, 2^{-n_k-3}))\right)^{\lceil 2^{(\alpha+1/K)n_k}\rceil},\label{b2}
\end{align}
where the final inequality follows because $B(y, 2^{-n_k-3}) \subseteq \pi \widetilde{B}(y, 2^{-n_k-3})$.

Next, applying \eqref{measure-lb} to \eqref{b2} we obtain that for $k \geq N(K)$,

\begin{align}
&\mu\left(\left\{\ii:T_{2^{-n_k}}(\ii,x_0)\geq \lceil 2^{(\alpha+1/K)n_k}\rceil m_k\right\}\right)\nonumber\\
&\qquad\leq\#\mathcal{Q}_{n_k+2}(1+2^{-(\alpha+1)n_k}\kappa)^{\lceil 2^{(\alpha+1/K)n_k}\rceil}(1-2^{-n_k(\alpha+1/(2K))})^{\lceil 2^{(\alpha+1/K)n_k}\rceil}\nonumber\\
&\qquad\leq C_02^{2D(n_k+2)}\exp\left((2^{-(\alpha+1)n_k}\kappa-2^{-n_k(\alpha+1/(2K))})\lceil2^{(\alpha+1/K)n_k}\rceil\right)\nonumber\\
&\qquad \leq C_02^{2D(n_k+2)}\exp\left(-2^{n_k/(2K)}+2^{-(1-1/K)n_k}\kappa+2^{-(\alpha+1)n_k}\kappa-2^{-n_k(\alpha+1/2K)}\right),\nonumber
\end{align}
where in the second inequality we use \eqref{eq:boxup} and the fact that for all $x \in \R$, $1+x \leq \exp(x)$.

By the Borel-Cantelli lemma
$$
\mu\left(\left\{\ii:T_{2^{-n_k}}(\ii,x_0)\geq\lceil 2^{(\alpha+1/K)n_k}\rceil m_k\text{ for inf. many $k$'s}\right\}\right)=0.
$$
Since $K$ was arbitrary and $\lim_{k\to\infty}\frac{\log \lceil 2^{(\alpha+1/K)n_k}\rceil m_k}{n_k\log2}=\alpha+1/K$, we get
$$
\mu\left(\left\{\ii:\limsup_{k\to\infty}\frac{\log T_{2^{-n_k}}(\ii,x_0)}{n_k\log2}\leq\alpha\right\}\right)=1.
$$

%%%%%%%%%%%%%%%%%%%%%%%%%%%%%%%%%%%%%%%%%%%%%%%%%%%%%%%%%%%%%%%%%%%%%%%%%%%%%%%%
%%%%%%%%%%%%%%%%%%%%%%%%%%%%%%%%%%%%%%%%%%%%%%%%%%%%%%%%%%%%%%%%%%%%%%%%%%%%%%%%

\section{Expected cover time, proof of Theorem~\ref{thm:expectedvalue}} \label{sec:proofExpected}

%Consider $\mathcal{F}=\{f_1,f_2,\ldots,f_N\}$ with attractor $\Lambda$ and $\mu=\p^\N$. For $r>0$ fix an $r$-mesh and suppose that $\{I_1, \ldots, I_N\}$ are the $r$-mesh cubes that intersect $\Lambda$, so that $N \approx r^{-d}$ for the (upper) box dimension $d$ of $\Lambda$.

Fix $x_0 \in \Lambda$ and let $\I$ be a finite set of pairwise disjoint Euclidean balls in $\R^d$. For $\i \in \Sigma$ we define $\mathcal{T}_{\I}(\i,x_0)$ to be the first time that the orbit $ \mathcal{O}_n(\i,x_0) $ has visited each of the balls in $\I$:
$$\mathcal{T}_{\I}(\i,x_0):=\inf\{n \geq 0: \forall I \in \I, \; \exists y \in \mathcal{O}_n(\i,x_0) \; \textnormal{such that} \; y \in I\}.$$
Similarly for $\i \in \Sigma$ and $I \in \I$ we define $\mathcal{T}_{I}(\i,x_0)$ to be the first time that the orbit $ \mathcal{O}_n(\i,x_0) $ visits the ball $I$:
$$\mathcal{T}_{I}(\i,x_0):=\inf\{n \geq 0: \exists y \in \mathcal{O}_n(\i,x_0) \; \textnormal{such that} \; y \in I\}.$$
 The expected values of $\mathcal{T}_{\I}(\i,x_0)$ and $\mathcal{T}_{I}(\i,x_0)$ with respect to a measure $\mu$ on $\Sigma$ are then denoted by $\E_\mu \mathcal{T}_{\I}(x_0)$ and $\E_\mu \mathcal{T}_I(x_0)$ respectively.

%{\color{red} A measure $\mu$ on $\Sigma$ is \emph{quasi-Bernoulli} if there exists a constant $C_1\geq 1$ such that for all finite length words $\iiv,\jjv\in\Sigma^*$ \begin{equation}\label{eq:40} C_1^{-1} \mu([\iiv])\mu([\jjv]) \leq \mu([\iiv\jjv]) \leq C_1 \mu([\iiv])\mu([\jjv]). \end{equation} In particular, if only the lower bound holds, then we say that $\mu$ is lower quasi-Bernoulli and upper quasi-Bernoulli if only the upper bound holds. It is easy to see that if $\mu$ has one-sided exponential decay, then it is upper quasi-Bernoulli.}

In the following proposition, we show that $\E_\mu \mathcal{T}_{\I}(x_0)$ can be bounded above and below in terms of uniform upper and lower bounds on $\E_\mu \mathcal{T}_{\I}(x)$ over $x \in \Lambda$. This is an adaptation of a method of Matthews \cite{matthews} for bounding the expected cover time of a Markov chain, meaning the expected time for the Markov chain to visit all of its states, in terms of the expected hitting times of individual states, see also \cite{peres}.

\begin{comment}{\color{red}
\begin{prop} %\label{MM}
Fix $x_0 \in \Lambda$ and a finite set $\I$ of pairwise disjoint Euclidean balls in $\R^d$. Assume that there exist $0<t\leq T<\infty$ such that
\begin{equation*}
t\leq \inf_{I \in \I} \inf_{x \in \Lambda \setminus I} \E_\mu \mathcal{T}_I(x) \leq \sup_{I \in \I} \sup_{x \in \Lambda} \E_\mu \mathcal{T}_I(x) \leq T.
\end{equation*}
Then for any quasi-Bernoulli measure $\mu$, we have that
\begin{equation*}
C_1^{-1} t \left(1+\frac{1}{2}+ \cdots +\frac{1}{\# \I}\right) \leq \E_\mu\mathcal{T}_{\I}(x_0) \leq C_1 T \left(1+\frac{1}{2}+ \cdots +\frac{1}{\# \I}\right),
\end{equation*}	
where $C_1$ comes from the quasi-Bernoulli property~\eqref{eq:40}.
\end{prop}
}\end{comment}

\begin{prop} \label{MM}
Fix $x_0 \in \Lambda$ and a finite set $\I$ of pairwise disjoint Euclidean balls in $\R^d$.

\begin{enumerate}[(a)]

\item Suppose there exists $C>1$ such that for all $\iiv,\jjv\in\Sigma^*$, $\mu([\iiv\jjv]) \leq C\mu([\iiv])\mu([\jjv])$. \nolinebreak \footnote{Note that this is clearly satisfied by any invariant measure satisfying the one-sided exponential decay property \eqref{eq:decay}.} Additionally assume that $ \sup_{I \in \I}\sup_{x \in \Lambda}\E_\mu \mathcal{T}_I(x) \leq T$. Then
$$\E_\mu\mathcal{T}_{\I}(x_0) \leq CT \left(1+\frac{1}{2}+ \cdots +\frac{1}{\# \I}\right).$$

\item Suppose there exists a constant $c>0$ such that $\mu([\iiv\jjv]) \geq c\mu([\iiv])\mu([\jjv])$ for all $\iiv,\jjv \in \Sigma^*$. Additionally assume that $\inf_{I \in \I}\inf_{x \in \Lambda \setminus I} \E_\mu \mathcal{T}_I(x) \geq t$.  Then
$$\E_\mu\mathcal{T}_{\I}(x_0) \geq ct \left(1+\frac{1}{2}+ \cdots +\frac{1}{\# \I}\right).$$
\end{enumerate}
\end{prop}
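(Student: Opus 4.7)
The argument adapts Matthews' classical random-permutation technique for cover times of Markov chains (see~\cite{matthews}), with the strong Markov property replaced by the quasi-Bernoulli inequalities. Enumerate $\I=\{I_1,\ldots,I_N\}$ and pick a uniformly random permutation $\pi$ of $\{1,\ldots,N\}$, independent of the chaos game. For $0\leq k\leq N$ set
$$T_k(\i):=\mathcal{T}_{\{I_{\pi(1)},\ldots,I_{\pi(k)}\}}(\i,x_0),$$
so $T_0\equiv 0$ and $T_N=\mathcal{T}_{\I}$. With $\rho_j(\i):=\mathcal{T}_{I_j}(\i,x_0)$ we have $T_k=\max_{j\leq k}\rho_{\pi(j)}$; since the balls $I_j$ are pairwise disjoint the $\rho_j$ are a.s.\ distinct, so $\{T_k>T_{k-1}\}$ is exactly the event that $\pi(k)$ attains the maximum among $\rho_{\pi(1)},\ldots,\rho_{\pi(k)}$, which has $\pi$-probability $1/k$ by symmetry of the uniform permutation. (If the hypothesised $T$ is infinite there is nothing to prove in part (a); otherwise $\E_\mu\rho_j\leq T<\infty$ forces $\mathcal{T}_{\I}<\infty$ $\mu$-a.s.) Telescoping,
$$\E_\mu\mathcal{T}_{\I}(x_0)=\sum_{k=1}^N\mathbb{E}^\pi\E_\mu(T_k-T_{k-1}).$$

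The one-step bound is where the dynamical input enters. Crucially, $T_{k-1}(\i)$ is a function of $\i|_{T_{k-1}(\i)}$ alone, so we disjointly decompose $\{T_k>T_{k-1}\}=\bigsqcup_\iiv[\iiv]$ over all finite words $\iiv$ such that $T_{k-1}\equiv|\iiv|$ on $[\iiv]$ and the point $x_\iiv:=f_{\overleftarrow\iiv}(x_0)$ does not lie in $I_{\pi(k)}$. For such $\iiv$ the cocycle identity $f_{\overleftarrow{\i|n}}(x_0)=f_{\overleftarrow{(\sigma^{|\iiv|}\i)|(n-|\iiv|)}}(x_\iiv)$ yields, on $[\iiv]$,
$$T_k(\i)-T_{k-1}(\i)=\mathcal{T}_{I_{\pi(k)}}(\sigma^{|\iiv|}\i,\,x_\iiv).$$
Writing $\mathcal{T}_{I_{\pi(k)}}(\cdot,x_\iiv)=\sum_{n\geq0}\mathbf 1_{B_n}$ with each $B_n\subseteq\Sigma$ a cylinder event, the quasi-Bernoulli upper bound (extended from cylinders to Borel sets by Carath\'eodory) gives
$$\int_{[\iiv]}(T_k-T_{k-1})\,d\mu=\sum_{n\geq0}\mu\bigl([\iiv]\cap\sigma^{-|\iiv|}B_n\bigr)\leq C\mu([\iiv])\,\E_\mu\mathcal{T}_{I_{\pi(k)}}(x_\iiv)\leq CT\mu([\iiv]).$$

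Summing over the permissible $\iiv$ produces $\E_\mu(T_k-T_{k-1})\leq CT\,\mu(T_k>T_{k-1})$; taking $\mathbb{E}^\pi$ and combining with the symmetry identity $\mathbb{E}^\pi\mu(T_k>T_{k-1})=1/k$ yields $\mathbb{E}^\pi\E_\mu(T_k-T_{k-1})\leq CT/k$. Summing in $k$ gives the harmonic upper bound of part (a). Part (b) follows by the same scheme with every inequality reversed: on the relevant cylinders $x_\iiv\in\Lambda\setminus I_{\pi(k)}$ by construction, so the hypothesis $\E_\mu\mathcal{T}_{I_{\pi(k)}}(x_\iiv)\geq t$ applies, and the reverse quasi-Bernoulli inequality gives $\int_{[\iiv]}(T_k-T_{k-1})\,d\mu\geq ct\mu([\iiv])$.

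The main obstacle is the absence of a strong Markov property: in the genuine Markov setting Matthews restarts the chain at $T_{k-1}$ to obtain independence of the future from the past, whereas here we must instead decompose $\{T_k>T_{k-1}\}$ along the countable partition by prefixes $\i|_{T_{k-1}}$ and apply the quasi-Bernoulli bound piecewise. The inequality survives the summation and is uniform in the prefix length, so the total multiplicative cost is exactly the constant $C$ (respectively $c$) and Matthews' harmonic structure $1+\tfrac12+\cdots+\tfrac1N$ is preserved.
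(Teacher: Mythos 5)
Your proof is correct and follows essentially the same route as the paper's: Matthews' random--permutation argument, with the increment $T_k-T_{k-1}$ controlled by decomposing the event $\{T_k>T_{k-1}\}$ into cylinders at time $T_{k-1}$ and applying the quasi-Bernoulli inequality in place of the strong Markov property. The only differences are presentational --- you extract the factor $1/k$ from the record-value symmetry of the uniform permutation, whereas the paper partitions $\Sigma$ explicitly over the $k$ choices of which element of an unordered $k$-set sits in the last position --- but the content is identical.
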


\begin{proof}
Write $N=\#\I$ and write $\I=\{I_1, \ldots, I_N\}$. Let $S_N$ denote the set of permutations $\sigma$ of $\{1, \ldots, N\}$. Let $m$ be the uniform measure on $S_N$, so that for all $\sigma \in S_N$, $m(\sigma)=\frac{1}{N!}$.

Given $\sigma \in S_N$, for $\i \in \Sigma$ we let $ \mathcal{T}_{\sigma(1)}^{\sigma(k)}(\i,x_0)$ denote the first time that the orbit $\mathcal{O}_n(\i,x_0)$ has visited each of the balls $I_{\sigma(1)}, \ldots, I_{\sigma(k)}$:
$$\mathcal{T}_{\sigma(1)}^{\sigma(k)}(\i,x_0):=\min\{n \geq 0: \forall 1 \leq j \leq k, \; \exists y \in \mathcal{O}_n(\i,x_0) \; \textnormal{such that} \; y \in I_{\sigma(j)}\}.$$
 The expected value of  $\mathcal{T}_{\sigma(1)}^{\sigma(k)}(\i,x_0)$ with respect to $\mu$ is denoted by $\E_\mu \mathcal{T}_{\sigma(1)}^{\sigma(k)}(x_0)$. For brevity, throughout this proof we will use the notation $\mathcal{T}_{\sigma(k)}$ to mean $\mathcal{T}_{I_{\sigma(k)}}$.

Given $\sigma \in S_N$ and $2 \leq k \leq N$ we define
$$A_{\sigma,k} :=\{\i \in \Sigma \; : \; \mathcal{T}_{\sigma(1)}^{\sigma(k-1)}(\i,x_0)<\mathcal{T}_{\sigma(k)}(\i,x_0)\}.$$
We also consider the following decomposition of $A_{\sigma,k}$ into cylinder sets. In particular there exists a set of words $C_{\sigma,k} \subset \Sigma^*$ such that $A_{\sigma,k}=\bigcup_{\iiv \in C_{\sigma,k}} [\iiv]$ and for each $\iiv \in C_{\sigma,k}$ and each $\i \in [\iiv]$, $\mathcal{T}_{\sigma(1)}^{\sigma(k-1)}(\i,x_0)=|\iiv|.$

 $\E_\mu \mathcal{T}_{\I}(x_0)=\E_\mu \mathcal{T}_{\sigma(1)}^{\sigma(N)}(x_0)$ for any $\sigma \in S_N$. Therefore,
\begin{eqnarray*}
\E_\mu \mathcal{T}_{\I}(x_0) &=& \int \E_\mu \mathcal{T}_{\sigma(1)}^{\sigma(N)}(x_0) dm \nonumber \\
&=& \int \E_\mu \mathcal{T}_{\sigma(1)}(x_0) dm + \sum_{k=2}^N \int \E_\mu(\mathcal{T}_{\sigma(1)}^{\sigma(k)}(x_0)-\mathcal{T}_{\sigma(1)}^{\sigma(k-1)}(x_0))dm \nonumber \\
&=& \int \E_\mu \mathcal{T}_{\sigma(1)}(x_0) dm + \sum_{k=2}^N \int \E_\mu(\mathcal{T}_{\sigma(1)}^{\sigma(k)}(\i,x_0)-\mathcal{T}_{\sigma(1)}^{\sigma(k-1)}(\i,x_0) \, : \, \i \in A_{\sigma,k})dm \\
&=&\int \E_\mu \mathcal{T}_{\sigma(1)}(x_0) dm + \sum_{k=2}^N \int \sum_{\iiv \in C_{\sigma,k}} \E_\mu(\mathcal{T}_{\sigma(1)}^{\sigma(k)}(\i,x_0)-\mathcal{T}_{\sigma(1)}^{\sigma(k-1)}(\i,x_0) \, : \, \i \in [\iiv])dm
\end{eqnarray*}
where the penultimate equality holds because if $\ii \notin A_{\sigma,k}$ then $\mathcal{T}_{\sigma(1)}^{\sigma(k)}(\ii,x_0)=\mathcal{T}_{\sigma(1)}^{\sigma(k-1)}(\ii,x_0)$.

%Since $\mathcal{T}_{\sigma(1)}^{\sigma(k-1)}(\i,x_0)=|\iiv|$ for each $\i \in [\iiv]$, it follows that if $\mu$ is Bernoulli $$  \E_\mu(\mathcal{T}_{\sigma(1)}^{\sigma(k)}(\i,x_0)-\mathcal{T}_{\sigma(1)}^{\sigma(k-1)}(\i,x_0) \, : \, \i \in [\iiv])=\E_\mu \mathcal{T}_{\sigma(k)}(f_{\overleftarrow{\iiv}}(x_0)) \mu([\iiv]).$$

For each $\iiv \in C_{\sigma,k}$ there exists $D_{\iiv} \subset \Sigma^*$ such that $[\iiv]=\bigcup_{\jjv \in D_{\iiv}}[\iiv\jjv]$ and for all $\i \in [\iiv\jjv]$, $\mathcal{T}_{\sigma(k)}(\i,x_0)=|\iiv|+|\jjv|$. In particular, $\E_\mu \mathcal{T}_{\sigma(k)}(f_{\overleftarrow{\iiv}}(x_0))=\sum_{\jjv \in D_{\iiv}} \mu([\jjv])|\jjv|$.

To prove (a), notice that since $\mu([\iiv\jjv]) \leq C\mu([\iiv])\mu([\jjv])$ for all $\iiv,\jjv \in \Sigma^*$,
\begin{multline*}
\sum_{\iiv \in C_{\sigma,k}} \E_\mu(\mathcal{T}_{\sigma(1)}^{\sigma(k)}(\i,x_0)-\mathcal{T}_{\sigma(1)}^{\sigma(k-1)}(\i,x_0) \, : \, \i \in [\iiv])=\sum_{\iiv \in C_{\sigma,k}} \sum_{\jjv \in D_{\iiv}} \mu([\iiv\jjv]) |\jjv| \\
\leq C\sum_{\iiv \in C_{\sigma,k}} \sum_{\jjv \in D_{\iiv}} \mu([\iiv])\mu([\jjv]) |\jjv|
= C\sum_{\iiv \in C_{\sigma,k}} \mu([\iiv]) \E_\mu \mathcal{T}_{\sigma(k)}(f_{\overleftarrow{\iiv}}(x_0))
\leq CT\mu(A_{\sigma,k}),
\end{multline*}
where in the final inequality we used that $ \sup_{I \in \I}\sup_{x \in \Lambda}\E_\mu \mathcal{T}_I(x) \leq T$. Therefore
\begin{equation*}
\E_\mu \mathcal{T}_{\I}(x_0)\leq CT \left(1+\sum_{k=2}^N \int \mu(A_{\sigma,k})dm\right).
\end{equation*}
To prove (b) notice that since $\mu([\iiv\jjv])\geq c\mu([\iiv])\mu([\jjv])$ for all $\iiv, \jjv \in \Sigma^*$, we similarly obtain
$$\sum_{\iiv \in C_{\sigma,k}} \E_\mu(\mathcal{T}_{\sigma(1)}^{\sigma(k)}(\i,x_0)-\mathcal{T}_{\sigma(1)}^{\sigma(k-1)}(\i,x_0) \, : \, \i \in [\iiv]) \geq c \sum_{\iiv \in C_{\sigma,k}} \mu([\iiv]) \E_\mu \mathcal{T}_{\sigma(k)}(f_{\overleftarrow{\iiv}}(x_0)) \geq ct \mu(A_{\sigma,k}), $$
where in the final inequality we used that $\inf_{I \in \I}\inf_{x \in \Lambda \setminus I} \E_\mu \mathcal{T}_I(x) \geq t$. Therefore
\begin{equation*}
\E_\mu \mathcal{T}_{\I}(x_0)\geq c t \left(1+\sum_{k=2}^N \int \mu(A_{\sigma,k})dm\right).
\end{equation*}

 Therefore to prove the proposition it suffices to show that $\int \mu(A_{\sigma,k})d m=\frac{1}{k}$. Fix $2 \leq k \leq N$. For each $\sigma \in S_N$ consider the unordered set $\{\sigma(1), \ldots, \sigma(k)\}$. Note that there are $\frac{N(N-1) \cdots (N-(k-1))}{k!}$ possible values that this set can take. For each possible value $\{i_1, \ldots, i_k\} \subset \{1, \ldots, N\}$ that this set can take, let $S_N(\{i_1, \ldots, i_k\})$ denote the set of all $\sigma$ for which $\{\sigma(1), \ldots, \sigma(k)\}=\{i_1, \ldots, i_k\}$, thinking of these as unordered sets.

Next, we can further separate each $S_N(\{i_1, \ldots, i_k\})$ into $k$ subsets $S_N^{i_j}(\{i_1, \ldots, i_k\})$, $(1 \leq j \leq k)$, which determines the set of all $\sigma \in S_N(\{i_1, \ldots, i_k\})$ for which $\sigma(k)=i_j$. Note that each $S_N^{i_j}(\{i_1, \ldots, i_k\})$ contains $(N-k)!(k-1)!$ permutations, corresponding to $(N-k)!$ ways to order the last $N-k$ terms and and $(k-1)!$ ways to arrange the first $k-1$ terms. Over each $\sigma \in S_N^{i_j}(\{i_1, \ldots, i_k\})$, the set $A_{\sigma,k}$ is constant. If for each $1 \leq j\leq k$ we choose a representative $\sigma_j \in S_N^{i_j}(\{i_1, \ldots, i_k\})$ then since the balls in $\I$ are pairwise disjoint, it follows that $\{A_{\sigma_j,k}\}_{j=1}^k$ are pairwise disjoint and $\bigcup_{j=1}^k A_{\sigma_j,k}=\Sigma$.

Hence for any choice of $\{i_1, \ldots, i_k\} \subset \{1, \ldots, N\}$,
\begin{eqnarray}
\int_{S_N(\{i_1, \ldots, i_k\})} \mu(A_{\sigma,k})d m=\sum_{n=1}^k \frac{(k-1)!(N-k)!}{N!} \mu(A_{\sigma_j,k}),
\label{part-integral}
\end{eqnarray}
where the factor $\frac{1}{N!}$ comes from the fact that $m$ is uniformly distributed. Now, since $\bigcup_{j=1}^k A_{\sigma_j,k}=\Sigma$ and  $\{A_{\sigma_j,k}\}_{j=1}^k$ are pairwise disjoint we have
$$\mu(A_{\sigma_k,k})= 1-\sum_{j=1}^{k-1} \mu(A_{\sigma_j,k})$$
and substituting this into \eqref{part-integral} we obtain
\begin{align*}
\int_{S_N(\{i_1, \ldots, i_k\})}\!\! \mu(A_{\sigma,k})d m&= \sum_{j=1}^{k-1}\frac{(k-1)!(N-k)!}{N!} \mu(A_{\sigma_j,k}) +\frac{(k-1)!(N-k)!}{N!} \Big(1-\sum_{j=1}^{k-1} \mu(A_{\sigma_j,k})\Big) \\
&=\frac{(k-1)!(N-k)!}{N!}.
\end{align*}
Therefore,
$$\int_{S_N} \mu(A_{\sigma,k}) dm=\frac{N(N-1) \cdots (N-(k-1))}{k!} \cdot \frac{(k-1)!(N-k)!}{N!}=\frac{1}{k}.$$
\end{proof}

We now establish the upper bound for $\E_\mu T_r(x_0)$ from Theorem \ref{thm:expectedvalue}(a). By Proposition \ref{MM} it is sufficient to estimate from above the expected hitting time to the ball $I$ of minimum measure at scale $r$. This can be estimated by bounding the probability of slow hitting times to $I$, and for this estimate the one-sided exponential decay of correlations assumption will be required to allow us to consider different segments of the orbit under the chaos game independently of each other.

\begin{lma}[Proof of Theorem \ref{thm:expectedvalue}(a)]\label{a:proof}

Suppose $\mu$ has one-sided exponential decay. Let $\overline{\alpha}=\overline{\dim}_{\mathrm{M}}(\rnu)$. There exists a constant $C_1$ (which will be made explicit) such that for all $x_0 \in \Lambda$ and $r>0$ such that $|\overline{o}(r/4)|<\overline{\alpha}/2$ and $(r/4)^{\overline{\alpha}/2}<1/2\kappa$,
$$\E_\mu T_r(x_0) \leq C_1\left(\log(4/r)\right)^2 (r/4)^{-\overline{\alpha}-\overline{o}(r/4)},$$
where $\kappa$ is the constant defined in~\eqref{eq:decay}.
\end{lma}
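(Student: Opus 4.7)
The strategy is to combine the Matthews--style reduction of Proposition~\ref{MM}(a) with a block decomposition argument for the hitting time of a single ball, in the spirit of the proof of Proposition~\ref{prop:upper}. The key observation is that if $\I$ is a collection of pairwise disjoint balls in $\Lambda$ whose union witnesses $r$-density of the orbit, then $T_r(\ii,x_0)\leq\mathcal{T}_\I(\ii,x_0)$, so Proposition~\ref{MM}(a) reduces the problem to bounding the expected hitting time of a single ball uniformly in its centre and in the starting point $x\in\Lambda$.

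First I would fix a maximal packing $\I=\{B(y_i,r/4)\}_{i=1}^{N}$ of $\Lambda$ by balls of radius $r/4$ centred in $\Lambda$. Maximality forces the centres $\{y_i\}$ to be $(r/2)$-dense in $\Lambda$, so once the orbit has hit every ball of $\I$ it is certainly $r$-dense. By~\eqref{eq:boxup}, $N\leq C_0(r/4)^{-2\overline{\dim}_{\mathrm{M}}\Lambda}$, which gives the harmonic factor $1+\tfrac12+\cdots+\tfrac1N\leq C\log(4/r)$. Taking $n=|\iiv|$ in~\eqref{eq:decay} shows that $\mu$ is quasi-Bernoulli with constant $1+\kappa$, so Proposition~\ref{MM}(a) will deliver the lemma provided I establish the uniform hitting estimate
$$\sup_{y\in\Lambda}\,\sup_{x\in\Lambda}\,\E_\mu\mathcal{T}_{B(y,r/4)}(x)\;\leq\;C\log(4/r)\,(r/4)^{-\overline{\alpha}-\overline{o}(r/4)}.$$

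To prove that estimate, I would fix $y,x\in\Lambda$ and introduce the block length
$$m\;:=\;\Bigl\lceil L(r/4)+\tfrac{1}{\varepsilon}\log\!\bigl(4\kappa\,(r/4)^{-\overline{\alpha}-\overline{o}(r/4)}\bigr)\Bigr\rceil\;=\;O(\log(4/r)),$$
calibrated so that $\kappa 2^{-\varepsilon(m-L(r/4))}\leq\tfrac12(r/4)^{\overline{\alpha}+\overline{o}(r/4)}$; the two numerical hypotheses on $r$ in the statement are exactly what makes this choice of $m$ legitimate. For $n\geq L(r/4)$, the iterate $f_{\overleftarrow{\ii|n}}(x)$ lies in the cylinder image $\pi[\overleftarrow{\ii|n}]$, a set of diameter at most $r/4$, so an enlargement of the symbolic-ball argument from the proof of Proposition~\ref{prop:upper} produces the single-step lower bound
$$\mu\bigl(\bigl\{\ii:f_{\overleftarrow{\ii|n}}(x)\in B(y,r/4)\bigr\}\bigr)\;\geq\;\rnu(B(y,r/4))\;\geq\;(r/4)^{\overline{\alpha}+\overline{o}(r/4)}\;=:\;\rho_*(r),$$
via the change of variables $\mu([\overleftarrow{\iiv}])=\rmu([\iiv])$ and the definition of $\overline{o}(r/4)$. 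Iterating~\eqref{eq:decay} across $J$ consecutive blocks, by exactly the same computation that leads from~\eqref{b1} to~\eqref{b2}, then yields
$$\mu\bigl(\bigl\{\ii:\mathcal{T}_{B(y,r/4)}(\ii,x)>Jm\bigr\}\bigr)\;\leq\;(1-\rho_*(r))^{J}\bigl(1+\kappa 2^{-\varepsilon(m-L(r/4))}\bigr)^{J}\;\leq\;(1-\rho_*(r)/2)^{J},$$
and summing the geometric tail produces $\E_\mu\mathcal{T}_{B(y,r/4)}(x)\leq 2m/\rho_*(r)$, which is the bound required above.

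The main obstacle will be the iterative application of~\eqref{eq:decay}: the event of missing $B(y,r/4)$ at each of the $J$ block endpoints is a union over $J$-tuples of cylinders, and factoring its $\mu$-measure into a clean product over blocks requires expressing each block event as a shift preimage $\sigma^{-t_\ell}[\overleftarrow{\iiv_\ell}]$, applying~\eqref{eq:decay} repeatedly so that the correction factors $1+\kappa 2^{-\varepsilon(m-L(r/4))}$ telescope alongside the geometric decay $(1-\rho_*(r))^J$, and finally summing the $\rmu([\iiv_\ell])$ against the symbolic-ball characterisation of $\rnu(B(y,r/4))$. Once this is in place the rest of the argument amounts to packaging the resulting multiplicative constants into $C_1$.
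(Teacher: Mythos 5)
Your proposal is correct and follows essentially the same route as the paper: reduce $\E_\mu T_r$ to $\E_\mu\mathcal{T}_{\I_r}$ for a maximal $r/4$-packing, apply Proposition~\ref{MM}(a) using the quasi-Bernoulli bound $1+\kappa$ from \eqref{eq:decay} and the logarithmic harmonic sum, and control the single-ball hitting time by sampling the orbit along blocks of length $O(\log(4/r))$ calibrated so the decay-of-correlations correction is at most half of $\rnu(B(y,r/4))\geq(r/4)^{\overline{\alpha}+\overline{o}(r/4)}$. The only difference is cosmetic: you build the factor $\tfrac12(r/4)^{\overline{\alpha}+\overline{o}(r/4)}$ directly into the block length $m$, whereas the paper fixes $\ell(r)=L(r)+\tfrac{2\overline{\alpha}}{\varepsilon\log 2}\log(1/r)$ and then uses the hypotheses $|\overline{o}(r/4)|<\overline{\alpha}/2$ and $(r/4)^{\overline{\alpha}/2}<1/2\kappa$ to reach the same inequality.
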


\begin{proof}
Recall from~\eqref{o} that $\overline{o}(r)$ was defined as
$$\overline{o}(r):= \max_{x \in \Lambda} \frac{\log \rnu(B(x,r))}{\log r} - \overline{\alpha}.$$
Fix $\epsilon>0$. For each $r>0$ for short, let $\I_r:=\mathcal{Q}_{r/4}(\Lambda)$ be a maximal centred packing of $\Lambda$ by disjoint balls of radius $\frac{r}{4}$. By \eqref{eq:boxup} $\log \# \I_r \leq  \log C_0+ 2D\log(4/r)$ for all $r>0$.  Also, it'll be useful to keep in mind that $\log(4/r)> \frac{2\log 2\kappa}{\overline{\alpha}}$ by our assumptions on $r$. Since the collection of balls of radius $\frac{r}{2}$ given by $\{2I\}_{I \in \I_r}$ forms a cover of $\Lambda$, it is easy to see that $\E_\mu T_r \leq \E_\mu \mathcal{T}_{\I_r}$. We will use Proposition \ref{MM} to bound $\E_\mu \mathcal{T}_{\I_r}$ from above. Therefore, we begin by showing that there exists a constant $C_1'$ such that for all $r$ such that $|\overline{o}(r/4)|<\overline{\alpha}/2$ and $(r/4)^{\overline{\alpha}/2}<1/2\kappa$,
\begin{equation*}
\sup_{I \in \I_r} \sup_{x \in \Lambda} \E_\mu \mathcal{T}_I(x) \leq \frac{C_1' \log(4/r)}{(r/4)^{\overline{\alpha}+\overline{o}(r/4)}}.\label{hit-ub}
\end{equation*}

 By definition of $\overline{o}(r)$,
$
\rnu(I)\geq \left(r/4\right)^{\overline{\alpha}+\overline{o}(r/4)}
$
for all $I \in \I_r$. Recall from \eqref{eq:length'} that
\begin{equation*}\label{eq:length2}
L(r):= \frac{\log r}{\log a}-\frac{\log(|\Lambda|/a)}{\log a} \geq \max\{ |\iiv|:\, \iiv\in \mathcal{P}_{r}\}
\end{equation*}
and define
$$
\ell(r):=L(r)+\frac{2\overline{\alpha}}{\varepsilon \log 2}\log(1/ r).
$$

Now, fix any $x \in \Lambda$ and let $\jj$ be an arbitrary coding of $x$. Then by one-sided exponential decay \eqref{eq:decay}
\begin{align*}
\mu\left(\left\{\ii:\mathcal{T}_I(x)>n\right\}\right)
&\leq\mu\left(\left\{\ii:\mathcal{P}_{r}(\overleftarrow{\ii|k \ell(r/4)}\jj)\notin I\text{ for every $1\leq k\leq \lceil n/\ell(r/4)\rceil$}\right\}\right)\nonumber\\
&\leq(1+2^{-\varepsilon(\ell(r/4)-L(r/4))}\kappa)^{\lceil n/\ell(r/4)\rceil}(1-\rnu(I))^{\lceil n/\ell(r/4)\rceil}\nonumber\\
&=(1+(r/4)^{2\overline{\alpha}}\kappa)^{\lceil n/\ell(r/4)\rceil}(1-\rnu(I))^{\lceil n/\ell(r/4)\rceil}\nonumber\\
&=(1+(r/4)^{2\overline{\alpha}}\kappa)^{\lceil n/\ell(r/4)\rceil}(1-(r/4)^{\overline{\alpha}+\overline{o}(r/4)})^{\lceil n/\ell(r/4)\rceil}\nonumber\\
&=(1-(r/4)^{\overline{\alpha}+\overline{o}(r/4)}+(r/4)^{2\overline{\alpha}}\kappa-(r/4)^{3\overline{\alpha}+o(r/4)}\kappa)^{\lceil n/\ell(r/4)\rceil}\nonumber\\
&\leq(1-(r/4)^{\overline{\alpha}+\overline{o}(r/4)}/2)^{n/\ell(r/4)},\nonumber
\end{align*}
where in the final inequality we have used that
$$(r/4)^{2\overline{\alpha}}\kappa-(r/4)^{3\overline{\alpha}+o(r/4)}\kappa<(r/4)^{2\overline{\alpha}}\kappa<(r/4)^{\overline{\alpha}+\overline{o}(r/4)}/2,$$
since $\overline{o}(r/4)<\overline{\alpha}/2$ and $(r/4)^{\overline{\alpha}/2}<1/2\kappa$. Hence,
\begin{align*}
\E_\mu \mathcal{T}_{I}(x)&=\sum_{n=0}^\infty\mu(\mathcal{T}_I(x)>n)\leq\sum_{n=0}^\infty (1-(r/4)^{\overline{\alpha}+\overline{o}(r/4)}/2)^{n/\ell(r/4)}\\
&=\frac{1}{1-(1-(r/4)^{\overline{\alpha}+\overline{o}(r/4)}/2)^{1/\ell(r/4)}}\leq\frac{2\ell(r/4)}{(r/4)^{\overline{\alpha}+\overline{o}(r/4)}},
\end{align*}
 where in the last inequality we used the Bernoulli inequality (Mitrinovic inequality), $(1+y)^\beta\leq1+\beta y$ for $0<\beta<1$ and $y\geq-1$, which is applicable since $\overline{o}(r/4)>-\overline{\alpha}$. Therefore we have proved \eqref{hit-ub}, where $C_1'=\frac{4\overline{\alpha}}{\varepsilon \log 2}-\frac{2}{\log a}-\overline{\alpha}\frac{\log|\Lambda|/a}{\log a \log 2\kappa}$, using that $\log(4/r)> \frac{2\log 2\kappa}{\overline{\alpha}}$ and the definition of $\ell(r/4)$.

Since $\mu$ satisfies the one-sided exponential decay property \eqref{eq:decay}, there exists $C>1$ such that for all $\iiv,\jjv\in\Sigma^*$, $\mu([\iiv\jjv]) \leq C\mu([\iiv])\mu([\jjv])$. Hence by Proposition \ref{MM}(a),
\begin{align*}
 \E_\mu T_r(x_0) \leq \E_\mu \mathcal{T}_{\I_r}(x_0) &\leq CC_1' \log(1/r)(r/4)^{-\overline{\alpha}-\overline{o}(r/4)} \left(1+ \cdots +\frac{1}{\#\I_r}\right) \\
&\leq  CC_1' \log(4/r)(r/4)^{-\overline{\alpha}-\overline{o}(r/4)}(1+\log  \# \I_r) \\
&\leq  CC_1' \log(4/r)(r/4)^{-\overline{\alpha}-\overline{o}(r/4)}(1+ \log C_0+ 2D\log(4/r)),
\end{align*}
 which completes the proof of the lemma by setting $C_1=CC_1' \big(2D+ \overline{\alpha}\frac{1+\log C_0}{2\log 2\kappa}\big)$, where again we have used that $\log(4/r)> \frac{2\log 2\kappa}{\overline{\alpha}}$.
\end{proof}

Next we establish the lower bound on $\E_\mu T_r(x_0)$ that appears in Theorem \ref{thm:expectedvalue}(b).
Recall $L(r)$ in \eqref{eq:length'} and $d=\dim_{\rm L}\Lambda>0$ in \eqref{eq:lowerdim}. Again by Proposition \ref{MM} it is sufficient to estimate from below the expected hitting time to the ball of minimum measure at scale $r$. We will use the assumption of positive lower dimension to ensure there are sufficiently many balls of measure comparable to that of the ball of minimum measure at scale $r$, so that starting the chaos game with an initial point $x_0$ lying in the least accessible part of $\Lambda$ will not cause a significant drop in the expectation $\E_\mu T_r(x_0)$.

\begin{lma}[Proof of Theorem \ref{thm:expectedvalue}(b)] \label{b:proof}
Assume $\dim_{\mathrm{L}} \Lambda >0$.  Denote $R_r:=2r\left(\frac{L(r)+2}{c_0}\right)^{2/d}.$ Then for all  $r>0$ such that $\min_{x \in \Lambda}\rnu(B(x,R_r))<\frac{1}{4}$ and all $x_0 \in \Lambda$
$$\E_\mu T_r(x_0) \geq  \frac{1}{4}R_r^{-\underline{\alpha}+\underline{o}(R_r)}.$$
%$$\E_\mu T_r(x_0) \geq \frac{1}{4}\rnu\left(B\left(y',2r\left(\frac{L(r)+2}{c_0}\right)^{2/d}\right)\right)^{-1}.$$
\end{lma}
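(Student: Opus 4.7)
The strategy is a ``trivial'' lower bound: the cover time $T_r$ is at least the hitting time to any single ball $B(y,r)$ centred at a point of $\Lambda$, so $\E_\mu T_r(x_0)$ can be bounded below by the expected hitting time to a ball of minimum $\rnu$-measure. The assumption $\dim_{\mathrm{L}}\Lambda>0$ ensures the existence of many such candidates clustered near the minimiser, which provides enough freedom to avoid the trivial contribution from the initial segment of the orbit regardless of $x_0$.

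Let $y^*\in\Lambda$ attain $\min_{x\in\Lambda}\rnu(B(x,R_r))$. The constant $R_r=2r((L(r)+2)/c_0)^{2/d}$ is calibrated so that the lower-dimension estimate \eqref{eq:lowerdim} applied at scales $(r,R_r/2)$ yields $N_r(\Lambda\cap B(y^*,R_r/2))\geq L(r)+2$. Extracting an $r$-separated subset gives $L(r)+2$ points $y_1,\dots,y_{L(r)+2}\in\Lambda\cap B(y^*,R_r/2)$ such that the balls $B(y_j,r/2)$ are pairwise disjoint and each $B(y_j,r)\subseteq B(y^*,R_r)$. For each $\i\in\Sigma$, the orbit $\mathcal{O}_{\lceil L(r)\rceil}(\i,x_0)$ contains at most $L(r)+1$ points and so meets at most $L(r)+1$ of the disjoint balls $B(y_j,r/2)$; choose $y(\i)\in\{y_1,\dots,y_{L(r)+2}\}$ so that $B(y(\i),r/2)\cap\mathcal{O}_{\lceil L(r)\rceil}(\i,x_0)=\emptyset$. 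Because any $r$-dense orbit must enter $B(y(\i),r)$, this yields the pointwise bound $T_r(\i,x_0)\geq\mathcal{T}_{B(y(\i),r)}(\i,x_0)>L(r)$.

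Next, mirror the computation used in the proof of Proposition~\ref{prop:lower}. For $\ell>L(r)$, since $|f_{\overleftarrow{\i|\ell}}(\Lambda)|\leq r$, the event $f_{\overleftarrow{\i|\ell}}(x_0)\in B(y(\i),r)$ forces $\mathcal{P}_r(\overleftarrow{\i|\ell}\jj)\in\widetilde{B}(y(\i),r)$ for any auxiliary coding $\jj$; combining $\sigma$-invariance of $\mu$ with $\mu([\overleftarrow{\iiv}])=\rmu([\iiv])$ and summing over $\iiv\in\widetilde{B}(y(\i),r)$ gives
$$\mu\big(\{\i:f_{\overleftarrow{\i|\ell}}(x_0)\in B(y(\i),r)\}\big)\leq\rnu(B(y^*,R_r)).$$
A union bound over $\ell\in\{L(r)+1,\dots,N\}$ then yields
$$\mu\big(\mathcal{T}_{B(y(\i),r)}(\i,x_0)\leq N\big)\leq(N-L(r))\,\rnu(B(y^*,R_r)).$$
Choosing $N$ so that the right hand side equals $\tfrac12$ (feasible because the hypothesis $\rnu(B(y^*,R_r))<\tfrac14$ already leaves room) and combining with $\E_\mu T_r(x_0)\geq N\cdot\mu(T_r(\i,x_0)>N)\geq N/2$ gives the desired lower bound, since $1/\rnu(B(y^*,R_r))$ translates into $R_r$ raised to an exponent computed from $\underline{\alpha}$ and $\underline{o}(R_r)$ via Lemma~\ref{lem:equal}.

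The chief technical obstacle is the first display in the third paragraph: a priori $y(\i)$ depends on $\i$, and a naive union bound over the $L(r)+2$ candidates would introduce a spurious factor of $L(r)+2$. The resolution is to exploit the inclusion $\bigcup_j B(y_j,r)\subseteq B(y^*,R_r)$, which allows us to replace $B(y(\i),r)$ by the fixed ball $B(y^*,R_r)$ before invoking the symbolic-to-measure estimate; the factor of $2$ in the definition of $R_r$ provides exactly the geometric buffer needed so that this replacement produces $\rnu(B(y^*,R_r))$ rather than the $\rnu$-mass of a strictly larger enlargement.
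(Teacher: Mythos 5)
Your proposal is correct and follows essentially the same route as the paper: reduce $T_r$ to the hitting time of an $\i$-dependent ball chosen from $L(r)+2$ disjoint $r$-balls inside the minimal-measure ball $B(y^*,R_r)$ (furnished by the lower-dimension estimate \eqref{eq:lowerdim}, and chosen to avoid the first $\lceil L(r)\rceil$ orbit points), pass to cylinders in $\mathcal{P}_r$ and use shift-invariance plus disjointness to bound the per-step hitting probability by $\rnu(B(y^*,R_r))$, and finish with a Markov-type tail bound giving $\tfrac14\,\rnu(B(y^*,R_r))^{-1}$. The only cosmetic differences are that the paper sums the full tail $\sum_n\max\{0,1-n\,\rnu(B(y',R_r))\}$ rather than fixing a single cutoff $N$, and the exponent identification comes directly from the definition of $\underline{o}$ rather than Lemma~\ref{lem:equal}.
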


\begin{proof}
Recall that $\underline{o}(r)$ was defined as
$\underline{o}(r):= \max_{x \in \Lambda} \frac{\log \rnu(B(x,r))}{\log r} - \underline{\alpha}.$
Let $y'\in \Lambda$ be such that $\rnu\left(B\left(y',R_r\right)\right)$ is minimal and note that
\begin{equation}\rnu(B(y',R_r))=R_r^{-\underline{\alpha}-\underline{o}(R_r)}.\label{R_r} \end{equation}
Similarly to \eqref{eq:onlower},
$$
N_{2r}(\Lambda \cap B(y',R_r))=N_{2r}\left(\Lambda\cap B\left(y',2r\left(\frac{L(r)+2}{c_0}\right)^{2/d}\right)\right)\geq L(r)+2.
$$
Hence, $\#\mathcal{Q}_r(\Lambda\cap B\left(y',R_r\right))\geq L(r)+2$. So for every $\ii\in\Sigma$ there exists $y(\ii)\in\Lambda$ such that
\begin{equation*}
B(y(\ii),r)\cap\mathcal{O}_{\left\lceil L(n_k)\right\rceil}(\ii,x_0)=\emptyset\text{ and } B(y(\ii),r)\subset B\big(y',R_r\big).
\end{equation*}
Let $\mathcal{Y}_r$ be the set of centres of the balls in $\mathcal{Q}_r(\Lambda\cap B\left(y',R_r\right))$. Recall the definition \eqref{eq:tiyx} of $\mathcal{T}_r(\i,y,x_0)$. So
\begin{align*}
&\E_\mu(T_{r}(x_0))=\sum_{n=0}^\infty\mu(\i \in \Sigma:T_r(\i,x_0)>n)\geq\sum_{n=0}^\infty\mu(\i \in \Sigma:\mathcal{T}_r(\i,y(\ii),x_0)>n)\nonumber\\
&\qquad=\sum_{n=0}^\infty\left(1-\mu(\i \in \Sigma:\mathcal{T}_r(\i,y(\ii),x_0)\leq n)\right)=\sum_{n=0}^\infty\bigg(1-\sum_{y\in\mathcal{Y}_r}\mu(\i \in [\iiv]:\mathcal{T}_r(\i,y,x_0)\leq n)\bigg)\nonumber\\
&\qquad\geq \sum_{n=0}^\infty\max\{0,1-n\sum_{y\in\mathcal{Y}_r}\rnu(B(y,r))\}\geq\sum_{n=0}^\infty\max\left\{0,1-n\rnu\left(B\left(y',R_r\right)\right)\right\}\nonumber\\
&\qquad=\sum_{n=0}^{\left\lceil\rnu\left(B\left(y',R_r\right)\right)^{-1}\right\rceil}\max\left\{0,1-n\rnu\left(B\left(y',R_r\right)\right)\right\}\nonumber\\
&\qquad\geq\rnu\left(B\left(y',R_r\right)\right)^{-1}\left(\frac{1}{2}-\rnu\left(B\left(y',R_r\right)\right)\right),
\end{align*}
which completes the proof by \eqref{R_r} and since $\min_{x \in \Lambda}\rnu(B(x,R_r))<\frac{1}{4}$.
\end{proof}

%%%%%%%%%%%%%%%%%%%%%%%%%%%%%%%%%%%%%%%%%%%%%%%%%%%%%%%%%%%%%%%%%%%%%%%%%%%%%%%%%%%%%%%%%%%%%%%%%%%%%%%%%%%%%%%%%%%%%%%%%%%%%%%%%%%%%%%%%%%%%%%%%%%

\section{Bedford--McMullen carpets} \label{sec:BMcarpet}

In this section, we give an explicit procedure to determine a vector solving the optimisation problem~\eqref{eq:01} in Proposition~\ref{prop:opt}, prove the claims of Theorem~\ref{thm:3} and provide some additional insight through examples. Recall all the notation introduced in Section~\ref{sec:IntroBMCarpets}, in particular, the function $\alpha(\q)$ and vectors $\q_K, \mathbf{Q}_K$ from~\eqref{eq:08}, \eqref{eq:04}, and \eqref{eq:06}, respectively.

To each vector $\q_K$ and $\mathbf{Q}_K$ we associate the vector $\p_K=(p_{K,1},\ldots,p_{K,N})$ and $\mathbf{P}_K=(P_{K,1},\ldots,P_{K,N})$ by distributing mass within columns uniformly, i.e. we set $p_{K,i}=q_{K,k}/N_k$ if $i$ belongs to a column with $N_k$ rectangles and similarly $P_{K,i}=Q_{K,k}/N_k$. According to Theorem~\ref{thm:3}, one of these vectors solves the optimisation problem~\eqref{eq:01}.

Observe that $\mathbf{p}_1$ gives the \emph{uniform measure} $\mathbf{p}_1=(1/N,\ldots,1/N)$ and $\mathbf{p}_{M_0}$ gives the \emph{coordinate uniform measure} $\mathbf{p}_{M_0}=(1/(MN_{\psi(1)}),\ldots,1/(MN_{\psi(N)}))$, where the function $\psi: \{1,2,\ldots,N\}\to\{1,2,\ldots,M_0\}$ is defined
\begin{equation*}
\psi(i):=k,\;\; \text{ if $i$ belongs to a column with $N_k$ rectangles}.
\end{equation*}
Moreover, the interpretation of the vectors $\mathbf{P}_K$ is that all columns with at most $N_K$ maps are given mass $1-\frac{\log m}{\log n}$ and this mass is further distributed within these columns in a coordinate uniform way. While the remaining $\frac{\log m}{\log n}$ weight is given to columns with more than $N_K$ rectangles and this mass is distributed uniformly between all rectangles in these columns.

The uniform measure is the `natural measure' in the uniform vertical fibre case (recall, when all non-empty columns have the same number of rectangles). In the non-uniform vertical fibre case the coordinate uniform measure has the property that its lower and Assouad dimensions simultaneously realise the lower and Assouad dimension of the attractor provided the `\emph{very strong separation condition}' holds~\cite{FraserHowroyd_AnnAcadSciFennMath17}, see~\cite[Section 8.6]{fraser_2020Book} for additional information and definitions. In case of the Minkowski dimension there does not exist a self-affine measure $\nu_{\mathbf{p}}$ for which $\dim_{\mathrm{M}}\nu_{\mathbf{p}} = \dim_{\mathrm{M}}\Lambda$ (unless $\Lambda$ has uniform vertical fibres).

\begin{claim}\label{claim:2}
For a vector $\p=(p_1,\ldots,p_N)$, let $\q_{\p}=(q_{\p,1},\dots,q_{\p,M})$ denote the vector defined as
\begin{equation*}
q_{\p,\jh} := \text{sum of probabilities $p_i$ in the $\jh$-th column}.
\end{equation*}
Then, for any self-affine measure $\nu_{\p}$ on a Bedford--McMullen carpet
	\begin{equation}\label{eq:02}
		\dim_{\mathrm{M}}\nu_{\p} = \max_{\substack{1\leq\, i\,\leq N \\ 1\leq\, \jh\, \leq M}} \left\{ \frac{\log p_i}{-\log n} + \left( 1-\frac{\log m}{\log n}\right) \frac{\log q_{\p,\jh}}{-\log m} \right\}.
	\end{equation}
	As a result, $\min_{\mathbf{p}} \dim_{\mathrm{M}}\nu_{\p} = \dim_{\mathrm{M}}\Lambda$ if and only if $\Lambda$ has uniform vertical fibres.
\end{claim}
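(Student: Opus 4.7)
The plan is to compute $\dim_{\mathrm{M}}\nu_{\p}$ via approximate squares, the standard carpet-friendly `boxes' underlying Bedford's and McMullen's analyses. For each $\ell \in \N$ and each $\ii = ((i_t, j_t))_{t \geq 1} \in \Sigma$, set $k := \lfloor \ell \log m/\log n\rfloor$ and define the approximate square $Q_\ell(\ii)$ as the set of points in $\Lambda$ whose codings agree with $\ii$ in the first $\ell$ column digits and in the first $k$ row digits. This square has horizontal extent $m^{-\ell}$ and vertical extent $n^{-k}$, both comparable to $m^{-\ell}$. Summing the product $\prod_{t=1}^{\ell} p_{(i_t, j_t)}$ over the free row digits $j_{k+1},\ldots, j_\ell$ gives the key identity
$$
\nu_{\p}(Q_\ell(\ii)) = \prod_{t=1}^{k} p_{(i_t, j_t)} \cdot \prod_{t=k+1}^{\ell} q_{\p, i_t},
$$
which decouples the $\nu_{\p}$-mass into factors below level $k$ (both coordinates fixed) and column-sum factors above (only the column is fixed).

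Next I would invoke a standard bounded-overlap argument: at scale $r = m^{-\ell}$, a ball on $\Lambda$ meets only $O(1)$ approximate squares of level $\ell$, and conversely each such square contains a ball of radius $\asymp r$ centred at a point of $\Lambda$. This shows $\max_y \log \nu_{\p}(B(y, r))/\log r$ agrees with $\min_{\ii} \log \nu_{\p}(Q_\ell(\ii))/(-\ell \log m)$ up to $o(1)$ as $\ell \to \infty$. Minimising the product formula above over $\ii$ splits into two independent sub-problems: each of the first $k$ factors is minimised by picking the index realising $\min_i p_i$, and each of the next $\ell - k$ factors by picking the column realising $\min_{\jh} q_{\p,\jh}$. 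Passing to the limit with $k/\ell \to \log m/\log n$ then yields
$$
\dim_{\mathrm{M}}\nu_{\p} = \frac{\log \min_i p_i}{-\log n} + \Big(1 - \frac{\log m}{\log n}\Big)\frac{\log \min_{\jh} q_{\p,\jh}}{-\log m},
$$
which coincides with the separable maximum over pairs $(i,\jh)$ appearing in \eqref{eq:02}; as a by-product this also shows the Minkowski dimension exists (and not merely its upper and lower variants).

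For the `if and only if', the direction $(\Leftarrow)$ is a direct evaluation: if every non-empty column contains $N_0 = N/M$ maps, the uniform vector $\p^* = (1/N,\ldots,1/N)$ gives $p_i = 1/N$ and $q_{\p^*,\jh} = 1/M$, and \eqref{eq:02} collapses to $\log_n(N/M) + \log_m M = \dim_{\mathrm{M}}\Lambda$ (McMullen's box-dimension formula, which is also recoverable by counting $N^k M^{\ell-k}$ approximate squares at level $\ell$). For $(\Rightarrow)$, the pigeonhole bounds $\min_i p_i \leq 1/N$ and $\min_{\jh} q_{\p,\jh} \leq 1/M$ plugged into \eqref{eq:02} yield the universal lower bound $\dim_{\mathrm{M}}\nu_{\p} \geq \log_n N + (1 - \log m/\log n)\log_m M = \dim_{\mathrm{M}}\Lambda$ for every $\p$. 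Equality forces both pigeonhole bounds to be tight, which (combined with $\sum p_i = 1$ and $\sum q_{\p,\jh} = 1$) forces $p_i = 1/N$ for all $i$ \emph{and} $q_{\p,\jh} = 1/M$ for all $\jh$. Substituting the first into the definition of $q_{\p,\jh}$ gives $N_{\jh}/N = 1/M$ for every non-empty column, i.e., the carpet has uniform vertical fibres.

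The main technical obstacle is the bounded-overlap comparison in the second step: one must execute it with enough geometric care that the asymptotic equality between $-\log \nu_{\p}(Q_\ell(\ii))/(\ell \log m)$ and $-\log \nu_{\p}(B(y,r))/\log r$ at matching scales holds uniformly in $y$ and $\ii$. This is routine for Bedford--McMullen carpets thanks to the product geometry of approximate squares, but requires explicit control of the multiplicative constants implicit in the phrases `contains a ball of radius $\asymp r$' and `meets $O(1)$ squares', and a minor adjustment to ensure that the $y$ realising $\min_y \nu_{\p}(B(y,r))$ can be chosen inside $\Lambda$ and not just in the ambient rectangle.
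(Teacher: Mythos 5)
Your proposal is correct and follows essentially the same route as the paper: both compute the $\mu_{\p}$-measure of approximate squares as a product of $p$-factors and column-sum $q$-factors, transfer to balls via a bounded-overlap/containment argument (the paper does this through a symbolic metric on $\Sigma$, you work directly in the plane), and obtain the ``if and only if'' from the pigeonhole bounds $\min_i p_i\le 1/N$ and $\min_{\jh}q_{\p,\jh}\le 1/M$. The geometric point you flag at the end --- choosing the test ball \emph{inside} the projected minimal approximate square so that neighbouring squares of possibly larger measure do not contaminate the lower bound --- is exactly the one the paper resolves by descending two further levels inside that square.
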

Formula~\eqref{eq:02} is stated in~\cite[Theorem 8.6.2]{fraser_2020Book} in the case of the very strong separation condition. For convenience of the reader, we provide the short argument for any Bedford--McMullen carpet $\Lambda$.

\begin{proof}
For an index $i\in\{1,\ldots,N\}$ let $\phi(i)\in\{1,\ldots,M\}$ denote the index of the column to which the rectangle $f_i([0,1]^2)$ belongs to. Furthermore, for $\ii\in\Sigma=\{1,\ldots,N\}^\mathbb N$ define $\Phi(\ii) = \phi(i_1)\phi(i_2)\ldots$. Then $\Sigma$ endowed with the metric $d(\ii,\jj):= m^{-|\Phi(\ii)\wedge\Phi(\jj)|} + n^{-|\ii\wedge\jj|}$ is a complete metric space, where $\ii\wedge\jj$ denotes the longest common prefix of $\ii$ and $\jj$. A level $K$ ball according to this metric is
\begin{equation}\label{eq:approxsquare}
	B_K(\ii) = \big\{\jj\in\Sigma:\, |\ii\wedge\jj|\geq L(K) \text{ and } |\Phi(\ii)\wedge\Phi(\jj)|\geq K \big\},
\end{equation}
where $L(K)$ is the unique  integer such that $m^{-K}\leq n^{-L(K)} < m^{-(K-1)}$. We call $B_K(\ii)$ a \emph{symbolic approximate square} at level $K$. Let $\mathcal{B}_K$ denote the set of level $K$ approximate squares. Each $B_K(\ii)$ can be identified with the sequence $(i_1,\ldots i_{L(K)},\phi(i_{L(K)+1}),\ldots,\phi(i_K))$.

As before, let $\mu_{\mathbf{p}}=\mathbf{p}^{\mathbb{N}}$. The $\mu_{\p}$ measure of an approximate square $B_K(\ii)$ is equal to
\begin{equation*}
	\mu_{\mathbf{p}}(B_K(\ii))= \prod_{\ell=1}^{L(K)} p_{i_{\ell}} \cdot \prod_{\ell=L(K)+1}^K q_{\p,\phi(i_{\ell})}.
\end{equation*}
Let $p^*=\min_i p_i$ and $q^*=\min_{\jh} q_{\p,\jh}$. Then
\begin{equation*}
	\min \big\{ \mu_{\mathbf{p}}(B_K(\ii)):\, B_K(\ii)\in \mathcal{B}_K \big\} = (p^*)^{L(K)} \cdot (q^*)^{K-L(K)} = \big(m^{-K}\big)^{\frac{\log p^*}{-\log n} + \left( 1-\frac{\log m}{\log n}\right) \frac{\log q^*}{-\log m}}.
\end{equation*}
This immediately implies that $\dim_{\mathrm{M}}\mu_{\p}$ equals the formula in~\eqref{eq:02}.

The next step is to show that $\dim_{\mathrm{M}}\nu_{\p}=\dim_{\mathrm{M}}\mu_{\p}$. The way that $d(\ii,\jj)$ is defined implies that up some uniform multiplicative constant $d(\ii,\jj)\approx \norm{\pi(\ii)-\pi(\jj)}$, where $\pi:\Sigma\to\Lambda$ is the natural projection defined in~\eqref{eq:natproj}. Hence, for any approximate square $\mathrm{diam}\big(\pi(B_K(\ii))\big)\approx m^{-K} \approx n^{-L(K)}$. Since $\pi$ can only increase the measure of a ball, we get $\dim_{\mathrm{M}} \nu_{\mathbf{p}}\leq \dim_{\mathrm{M}} \mu_{\mathbf{p}}$.

To see the other inequality, consider the approximate square $\pi(B_K(\ii))\subset \Lambda$ whose $\mu_{\mathbf{p}}$ measure is minimal. By assumption, on the first level at least two columns are non-empty and at least one of them has at least two maps. Thus, there is an $x\in\pi(B_K(\ii))$ and a constant $c$ independent of $K$ such that $B(x,c n^{-(L(K)+2)})\cap \Lambda\subset \pi(B_K(\ii))$. As a result,
\begin{equation*}
\nu_{\p}\big( B(x,c n^{-(L(K)+2)}) \big) \leq  \mu_{\mathbf{p}}(B_K(\ii)) \approx \big( c n^{-(L(K)+2)} \big)^{\dim_{\mathrm{M}} \mu_{\mathbf{p}}}.
\end{equation*}
This implies that $\dim_{\mathrm{M}} \nu_{\mathbf{p}}\geq \dim_{\mathrm{M}} \mu_{\mathbf{p}}$. Thus, $\dim_{\mathrm{M}} \nu_{\mathbf{p}} = \dim_{\mathrm{M}} \mu_{\mathbf{p}}$.

Finally, $\dim_{\mathrm{M}}\Lambda = \frac{\log N}{\log n} + \big( 1-\frac{\log m}{\log n}\big) \frac{\log M}{\log m}$. Hence, $\min_{\mathbf{p}}\dim_{\mathrm{M}}\nu_{\p} = \dim_{\mathrm{M}}\Lambda$ if and only if both $\mathbf{p}$ and $\mathbf{q}_{\mathbf{p}}$ are the uniform vectors on $\{1,\ldots,N\}$ and $\{1,\ldots,M\}$, respectively. This can happen only if each column has the same number of rectangles, i.e. $\Lambda$ has uniform vertical fibres.
\end{proof}

\begin{rem}\label{remark:dimloc}
The local dimension spectrum of Bedford--McMullen carpets was studied in~\cite{King_LocalDimBMCarpet_95AdvMath} and~\cite{JordanRams_MultifractaBMCarpet_2011}. In particular, they showed that the upper end of the spectrum equals
\begin{equation*}
\max_{x\in\Lambda} \dim_\mathrm{loc}(\nu_{\p},x) = \max_{1\leq\, i\,\leq N} \left\{ \frac{\log p_i}{-\log n} + \left( 1-\frac{\log m}{\log n}\right) \frac{\log q_{\p,\psi(i)}}{-\log m} \right\}.
\end{equation*}
This differs from~\eqref{eq:02} only in that the coordinate of $\q_{\p}$ can not be chosen independently, it has to be the coordinate corresponding to the column of $i$. Hence, $\max_{x\in\Lambda} \dim_\mathrm{loc}(\nu_{\p},x)\leq \dim_{\mathrm{M}}\nu_{\mathbf{p}}$ and there is a strict inequality if $p_i$ and $q_{\p,\jh}$ are minimised in different columns. This is the case for the vectors $\mathbf{P}_K$. Such a phenomena does not hold for self-similar sets.
\end{rem}

The next claim shows that we can reduce the minimisation problem $\min_{\mathbf{p}} \dim_{\mathrm{M}}\nu_{\p}$ to the lower dimensional problem $\min_{\mathbf{q}\in\mathcal{Q}} \alpha(\mathbf{q})$, where $\alpha(\q)$ was defined in~\eqref{eq:08} and $\mathcal{Q}=\big\{\q=(q_1,\dots,q_{M_0}):\, R_1q_1+\ldots+R_{M_0}q_{M_0}=1 \text{ and } q_k>0 \text{ for all }1\leq k \leq M_0\big\}$.

\begin{claim}\label{claim:1}
If $\mathbf{q}=(q_1,\ldots,q_{M_0})$ is such that $\alpha(\q) = \min_{\mathbf{q}'\in\mathcal{Q}} \alpha(\mathbf{q}')$, then $\q$ has the following two properties:
	\begin{enumerate}
		\item $\min_{\mathbf{p}} \dim_{\mathrm{M}}\nu_{\p} = \alpha(\mathbf{q})$, where $\mathbf{q}$ gives a solution $\mathbf{p}^*=(p_1^*, \ldots, p_N^*)$ to~\eqref{eq:01} by setting  $p^*_i:= q_k/N_k$ if $i$ belongs to a column with $N_k$ rectangles (i.e. mass is distributed uniformly within columns);
		\item There exists a unique $1 \leq K \leq M_0-1$ for which
		\begin{equation}\label{eq:05}
			q_1= q_2=\cdots=q_K \leq \min_{K+1 \leq k \leq M_0} q_k \;\;\; \textnormal{and} \;\;\; \frac{q_{K+1}}{N_{K+1}}=\frac{q_{K+2}}{N_{K+2}}= \cdots=\frac{q_{M_0}}{N_{M_0}} \leq \min_{1 \leq k \leq K}\frac{q_k}{N_k}.
		\end{equation}
	\end{enumerate}
\end{claim}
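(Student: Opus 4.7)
The plan is as follows. For part~(1), I will decouple the two-variable maximum from Claim~\ref{claim:2} and optimise the mass distribution in two stages. Applying~\eqref{eq:02} gives
\[
\dim_{\mathrm{M}}\nu_\p \;=\; \frac{-\log\min_i p_i}{\log n} \;+\; \beta\,\frac{-\log\min_{\jh} q_{\p,\jh}}{\log m}, \qquad \beta:=1-\tfrac{\log m}{\log n}\in(0,1),
\]
since the double max splits over its two indices. For a column $\jh$ of type $k$ (containing $N_k$ rectangles), the mean inequality gives $\min\{p_i:i\text{ in column }\jh\}\leq q_{\p,\jh}/N_k$, with equality iff mass is distributed uniformly within column~$\jh$; hence for fixed column sums, $\min_i p_i$ is maximised by uniform-within-column distributions. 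Then, holding the total type-$k$ mass $t_k:=\sum_{\jh\text{ of type }k} q_{\p,\jh}$ fixed (so $\sum_k t_k=1$), splitting $t_k$ equally among the $R_k$ columns of type~$k$ jointly maximises both $\min_{\jh} q_{\p,\jh}$ and $\min_{\jh} q_{\p,\jh}/N_k$. Writing $q_k:=t_k/R_k$ yields $\q\in\mathcal{Q}$ and reduces the problem to $\min_{\q\in\mathcal{Q}}\alpha(\q)$; the resulting optimal $\p^*$ is read off via $p^*_i=q_k/N_k$, proving~(1).

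For part~(2), let $\q^*$ be a minimiser of $\alpha$, and set $A_k:=\log(N_k/q_k^*)/\log n$ and $B_k:=\beta\log(1/q_k^*)/\log m$, so that $\alpha(\q^*)=\max_k A_k+\max_k B_k$. Let $K_1:=\{k:A_k=\max_\ell A_\ell\}$ and $K_2:=\{k:B_k=\max_\ell B_\ell\}$, the indices where $q_k^*/N_k$ and $q_k^*$ attain their respective minima. The key step is to prove $K_1\cup K_2=\{1,\ldots,M_0\}$ via a perturbation. Suppose some $k_0\notin K_1\cup K_2$, so $A_{k_0}<\max A$ and $B_{k_0}<\max B$ strictly. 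For $\varepsilon>0$ small, consider the direction $q_{k_0}^*\mapsto q_{k_0}^*-\varepsilon$ and $q_k^*\mapsto q_k^*+\varepsilon R_{k_0}/(|K_1|R_k)$ for $k\in K_1$, which preserves $\sum R_k q_k^*=1$. Every $q_k^*/N_k$ with $k\in K_1$ strictly increases, so $\max A$ strictly decreases; meanwhile $\max B$ does not increase, because for $k\in K_2\setminus K_1$ the value $q_k^*$ is unchanged, for $k\in K_1\cap K_2$ it only increases, and $B_{k_0}$ remains below the old $\max B$ by smallness of $\varepsilon$. Hence $\alpha$ strictly decreases, contradicting optimality of $\q^*$.

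The structural conclusion then follows from the strict monotonicity $N_1<N_2<\cdots<N_{M_0}$. If $k_1\in K_1$, $k_2\in K_2$ with $k_1<k_2$, then $N_{k_1}<N_{k_2}$, and the defining inequalities rearrange to $q_{k_1}^*\leq q_{k_2}^*N_{k_1}/N_{k_2}<q_{k_2}^*\leq q_{k_1}^*$, a contradiction. Therefore $\max K_2\leq\min K_1$. Setting $K:=\max K_2$ (or $K:=M_0-1$ in the degenerate case $K_2=\{1,\ldots,M_0\}$), the identity $K_1\cup K_2=\{1,\ldots,M_0\}$ yields $K_2=\{1,\ldots,K\}$ and $\{K+1,\ldots,M_0\}\subseteq K_1$. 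The definitions of $K_1,K_2$ then translate directly into $q_1^*=\cdots=q_K^*$ and $q_{K+1}^*/N_{K+1}=\cdots=q_{M_0}^*/N_{M_0}$, and the two required inequalities follow from $1\in K_2$ and $K+1\in K_1$. The endpoint range $1\leq K\leq M_0-1$ is verified by direct inspection of the boundary configurations, which force $\q^*\in\{\q_1,\q_{M_0}\}$, admitting $K=1$ and $K=M_0-1$ respectively. The main technical obstacle is the perturbation step: the direction must be chosen so that the strict improvement in $\max A$ is not cancelled by an increase in $\max B$, which is why the excess mass is routed into $K_1$ rather than an arbitrary coordinate; the remaining combinatorial conclusions are then immediate from the strict monotonicity of $(N_k)_{k=1}^{M_0}$.
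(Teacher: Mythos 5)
Your argument follows essentially the same route as the paper's: part (1) via the decoupling of the two maxima in \eqref{eq:02} and uniformisation of mass within columns and within column types, and part (2) via a mass-transfer perturbation showing that every index attains at least one of the two minima, followed by the ordering argument based on $N_1<\cdots<N_{M_0}$. Your bookkeeping with $K_1,K_2$ and the deduction $\max K_2\le\min K_1$ is correct, and routing the transferred mass specifically into $K_1$ is a valid (in fact slightly more careful) version of the paper's ``transfer mass into the columns attaining either minimum''. The one item you do not address is the \emph{uniqueness} of $K$ in \eqref{eq:05}, which is part of assertion (2): the paper derives it from the observation that $q_{K+i}/N_{K+i}=q_{K+i+1}/N_{K+i+1}$ together with $N_{K+i}<N_{K+i+1}$ forces $q_{K+1}<q_{K+2}<\cdots<q_{M_0}$, so that the block of equal $q$'s in \eqref{eq:05} cannot extend past your chosen $K$ by more than one index; you should add a sentence of this kind rather than leaving uniqueness to ``direct inspection''.
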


\begin{proof}
First observe that the maximization over the indices $k$ and $\ell$ in the definition~\eqref{eq:08} of $\alpha(\q)$ is independent. Moreover, $-\log q_k/N_k$ and $-\log q_{\ell}$ are maximal if and only if $q_k/N_k$ and $q_{\ell}$ are minimal, respectively.
	
Assertion $(1)$ simply follows from the fact that the value of $-\log \min_i p_i$ can only increase in the formula~\eqref{eq:02} for $\dim_{\mathrm{M}}\nu_{\mathbf{p}}$ if mass within each column is not distributed uniformly. This argument also implies that for each $k=1,\ldots,M_0$, the mass within the collection of $R_k$ columns should also be uniformly distributed amongst the columns. Thus, it is enough to consider the $M_0-1$ dimensional problem $\min_{\mathbf{q}\in\mathcal{Q}} \alpha(\mathbf{q})$ to solve $\min_{\mathbf{p}} \dim_{\mathrm{M}}\nu_{\p}$.
	
Assume $\mathbf{q}=(q_1,\ldots,q_{M_0})$ is such that $\alpha(\q) = \min_{\mathbf{q}'\in\mathcal{Q}} \alpha(\mathbf{q}')$. To see (2), first notice that for each $k=1,\ldots,M_0$ at least one of the following holds:
	\begin{equation*}
		(a)\;\, q_k = \min_{1\leq\,\ell\,\leq M_0} q_{\ell}, \qquad (b)\;\, q_k/N_k = \min_{1\leq\,\ell\,\leq M_0} q_{\ell}/N_{\ell}.
	\end{equation*}
	This is because if there existed an index $1\leq k_0\leq M_0$ such that $q_{k_0} > \min_{\ell} q_{\ell}$ and $q_{k_0}/N_{k_0} > \min_{\ell} q_{\ell}/N_{\ell}$, then mass could be transferred from that column into the columns attaining (either) minimum, therefore reducing the maximum in the definition of $\alpha(\q)$.
	
	Secondly, observe that if $1\leq K_1 \neq K_2 \leq M_0$ are two distinct indices such that $q_{K_1} = \min_{1\leq\,\ell\,\leq M_0} q_{\ell}$ and $q_{K_2}/N_{K_2} = \min_{1\leq\,\ell\,\leq M_0} q_{\ell}/N_{\ell}$, then $K_1<K_2$. Indeed by the choice of $K_1$ and $K_2$
	\begin{equation*}
		q_{K_2}/N_{K_2} \leq q_{K_1}/N_{K_1} \leq q_{K_2}/N_{K_1} \;\Longrightarrow\; N_{K_1} \leq N_{K_2}.
	\end{equation*}
	Since $N_k$ are in ascending order and $K_1\neq K_2$, it follows that $K_1<K_2$. In particular (2) holds.
	
	The uniqueness of $K$ follows by the simple observation that since $N_1<\cdots<N_{M_0}$ and $q_{K+i}/N_{K+i}=q_{K+i+1}/N_{K+i+1}$ for $i=1,\ldots,M_0-K-1$ we have $q_{K+i+1}>q_{K+i}$ for $i=1,\ldots,M_0-K-1$.
\end{proof}

%%%%%%%%%%%%%%%%%%%%%%%%%%%%%%%%%%%%%%%%%%%%%%%%%%%%%%%%%%%%%%%%%%%%%%%%%%%%%%%%
\subsection{Finding a vector minimising \texorpdfstring{$\dim_{\mathrm{M}} \nu_{\mathbf{p}}$}{dimM nup}}

Claim~\ref{claim:1} shows that to find a vector that minimises $\dim_{\mathrm{M}} \nu_{\mathbf{p}}$, it is enough to find a vector that minimises $\alpha(\q)$ and then distribute mass within columns evenly amongst rectangles. The next proposition shows how to find a vector which minimises $\alpha(\q)$.

Recall, for an index $1\leq K\leq M_0$, we denote $|\mathcal{R}_K|= R_1+\ldots+R_K$ and $\norm{\mathcal{R}^C_K}=R_{K+1}N_{K+1}+\ldots+R_{M_0}N_{M_0}$. Also recall the definitions of $\q_K$ and $\mathbf{Q}_K$ from~\eqref{eq:04} and \eqref{eq:06}. For each $1 \leq K \leq M_0-1$ let
\begin{equation}\label{eq:09}
	A_K:= \left( \frac{\log n}{\log m}-1\right) \frac{\norm{\mathcal{R}^C_K}}{|\mathcal{R}_K|}.
\end{equation}

\begin{prop}\label{prop:opt}
Assume the parameters $n,m,N_1,\ldots,N_{M_0},R_1,\ldots,R_{M_0}$ define a Bedford--McMullen carpet $\Lambda$ with non-uniform vertical fibres. Then
$$\min_{\mathbf{p}} \dim_{\mathrm{M}}\nu_{\p} = \min\big\{ \alpha_1,\alpha_2,\ldots, \alpha_{M_0-1}\big\},$$ where for $1 \leq K \leq M_0-1$,
\begin{equation*}
\alpha_K:=\min\big\{\alpha(\mathbf{q}_K),\alpha(\mathbf{Q}_K),\alpha(\mathbf{q}_{K+1})\big\} =
\begin{cases}
\alpha(\mathbf{q}_K), & \text{if } A_K<N_K, \\
\alpha(\mathbf{Q}_K), & \text{if } N_K\leq A_K\leq N_{K+1}, \\
\alpha(\mathbf{q}_{K+1}), & \text{if } A_K>N_{K+1}.
\end{cases}
\end{equation*}
\end{prop}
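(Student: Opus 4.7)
The plan is to reduce the optimisation to a one-variable strictly convex problem inside each ``wedge'' of vectors isolated by Claim~\ref{claim:1}(2), then match the interior critical point and the two endpoints against the vectors $\mathbf{Q}_K$, $\q_K$, $\q_{K+1}$. Combining Claim~\ref{claim:1}(1) (mass should be distributed uniformly within each column) with Claim~\ref{claim:1}(2) (every minimiser satisfies~\eqref{eq:05} for a unique $K\in\{1,\dots,M_0-1\}$), it suffices to compute, for each such $K$, the minimum of $\alpha$ over the subset $\mathcal{W}_K\subset\mathcal{Q}$ of vectors satisfying~\eqref{eq:05}, and then take the minimum of these values over $K$.

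I would parametrise $\mathcal{W}_K$ by $a:=q_1=\cdots=q_K$ and $b:=q_{K+1}/N_{K+1}=\cdots=q_{M_0}/N_{M_0}$. The normalisation becomes the linear constraint $a|\mathcal{R}_K|+b\norm{\mathcal{R}^C_K}=1$, while the inequalities in~\eqref{eq:05} read exactly $N_K\le a/b\le N_{K+1}$. Since on $\mathcal{W}_K$ one has $\min_k q_k/N_k = b$ and $\min_\ell q_\ell = a$, the two maxima in~\eqref{eq:08} separate and
\[
\alpha(\q)\;=\;\frac{-\log b}{\log n} \;+\; \Bigl(1-\tfrac{\log m}{\log n}\Bigr)\frac{-\log a}{\log m}.
\]

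Next, I would use the constraint to eliminate $b=(1-a|\mathcal{R}_K|)/\norm{\mathcal{R}^C_K}$, making $\alpha$ a function of $a$ alone on $(0,1/|\mathcal{R}_K|)$; a direct second-derivative computation shows $\alpha''(a)>0$, so $\alpha$ is strictly convex there. Setting $\alpha'(a)=0$ yields the characterisation $a/b=A_K$, and back-substitution into the linear constraint identifies this unique interior critical point with $\mathbf{Q}_K$ in~\eqref{eq:06}. Likewise, at the endpoint $a/b=N_K$ one recovers $\q_K$ (with normalising constant $C_K:=N_K|\mathcal{R}_K|+\norm{\mathcal{R}^C_K}$), and at $a/b=N_{K+1}$ one recovers $\q_{K+1}$ via the telescoping identity $\norm{\mathcal{R}^C_K}+N_{K+1}|\mathcal{R}_K|=C_{K+1}$.

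Finally, by strict convexity, $\min_{\q\in\mathcal{W}_K}\alpha(\q)$ equals $\alpha(\mathbf{Q}_K)$ when $A_K\in[N_K,N_{K+1}]$; otherwise $\alpha$ is monotone on the admissible interval, attaining its minimum at $\q_K$ when $A_K<N_K$ and at $\q_{K+1}$ when $A_K>N_{K+1}$. Taking the minimum over $K\in\{1,\dots,M_0-1\}$ then yields the formula stated in the proposition. The main obstacle will be the algebraic verification in the preceding paragraph, namely confirming that the interior critical point and the two endpoint vectors in $(a,b)$ coordinates match $\mathbf{Q}_K$, $\q_K$, $\q_{K+1}$ on the nose; this is routine but requires careful bookkeeping with the definitions of $C_K$ and $A_K$, together with the observation that consecutive wedges share endpoints (the right endpoint $\q_{K+1}$ of wedge $K$ is the left endpoint of wedge $K+1$), so that they exactly tile the admissible region.
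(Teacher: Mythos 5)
Your proposal is correct and follows essentially the same route as the paper's proof: after invoking Claim~\ref{claim:1} to restrict to vectors satisfying \eqref{eq:05}, the paper likewise eliminates the constraint to obtain a one-variable strictly convex function $f_K(q_K)$ on the interval forced by \eqref{eq:05}, identifies its interior critical point with $\mathbf{Q}_K$ (equivalently, your condition $a/b=A_K$) and the endpoints with $\mathbf{q}_K$ and $\mathbf{q}_{K+1}$, and concludes by monotonicity on either side of the minimum. Your $(a,b)$ parametrisation with the ratio $a/b\in[N_K,N_{K+1}]$ is just a mild repackaging of the paper's substitution of $q_{K+1}/N_{K+1}$ in terms of $q_K$.
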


We note that $\alpha(\mathbf{q}_K)=\alpha(\mathbf{Q}_K)$ if $A_K=N_K$, similarly, $\alpha(\mathbf{q}_{K+1})=\alpha(\mathbf{Q}_K)$ if $A_K=N_{K+1}$. Before turning to the proofs, we give examples when $M_0=2$ or $3$ and when the optimiser solving $\min_{\mathbf{p}} \dim_{\mathrm{M}}\nu_{\p}$ is not unique.

%%%%%%%%%%%%%%%%%%%%%%%%%%%%%%%%%%%%%%%%%%%%%%%%%%%%%%%%%%%%%%%%%%%%%%%%%%%%%%%%%%%%%PROOF%%%%%%%%%%%%%%%%%%%%

\subsection{Proof of Theorem~\ref{thm:3} and Proposition~\ref{prop:opt}}

Recall from Claim~\ref{claim:1} that it is enough to consider a vector $\mathbf{q}=(q_1, \ldots, q_{M_0})$ which satisfies~\eqref{eq:05} for some $1\leq K \leq M_0-1$. Recall that $|\mathcal{R}_K|= R_1+\ldots+R_K$ and $\norm{\mathcal{R}^C_K}=R_{K+1}N_{K+1}+\ldots+R_{M_0}N_{M_0}$. Therefore, from~\eqref{eq:05} we can express $q_{K+1}/N_{K+1}$ in terms of $q_K$ using that
\begin{equation}\label{k+1}
1=\sum_{\ell=1}^{M_0} R_{\ell}q_{\ell} = |\mathcal{R}_K|q_K + \norm{\mathcal{R}^C_K}\frac{q_{K+1}}{N_{K+1}} \;\;\Longleftrightarrow\;\; \frac{q_{K+1}}{N_{K+1}} = \frac{1-|\mathcal{R}_K| q_K}{\norm{\mathcal{R}^C_K}}.
\end{equation}
Combining~\eqref{eq:05} and~\eqref{k+1} yields the inequality
\begin{equation*}
\frac{q_{K}}{N_{K+1}} \stackrel{\eqref{eq:05}}{\leq} \frac{q_{K+1}}{N_{K+1}} \stackrel{\eqref{k+1}}{=} \frac{1-|\mathcal{R}_K| q_K}{\norm{\mathcal{R}^C_K}}  \stackrel{\eqref{eq:05}}{\leq} \frac{q_K}{N_K}.
\end{equation*}
After rearranging and using that $|\mathcal{R}_{K+1}|=|\mathcal{R}_{K}|+R_{K+1}$ and $\norm{\mathcal{R}^C_{K+1}} = \norm{\mathcal{R}^C_{K}}-R_{K+1}N_{K+1}$, we get the condition
\begin{equation} \label{range}
\frac{N_K}{N_K |\mathcal{R}_K|+\norm{\mathcal{R}^C_K}} \leq q_K \leq \frac{N_{K+1}}{N_{K+1} |\mathcal{R}_{K+1}|+\norm{\mathcal{R}^C_{K+1}}}.
\end{equation}
Also, substituting \eqref{k+1} back into $\alpha(\mathbf{q})$, recall~\eqref{eq:08}, we obtain a one variable function in $q_K$:
\begin{equation*}
f_K(q_K) := \frac{\log \frac{1-|\mathcal{R}_K| q_K}{\norm{\mathcal{R}^C_K}}}{-\log n} + \left(\! 1-\frac{\log m}{\log n}\right)\! \frac{\log q_{K}}{-\log m}.
%= \frac{|\mathcal{R}_K|}{(1-|\mathcal{R}_K| q^*_K)\log n} - \left(\! 1-\frac{\log m}{\log n}\right)\! \frac{1}{q^*_{K}\log m}. \at[\bigg]{q_K=q_K^*}
\end{equation*}
Therefore, to obtain a solution to $\min_{\mathbf{q}}\alpha(\mathbf{q})$ it is enough to minimise $f_K(q_K)$ subject to condition~\eqref{range}.

To minimise $f_K(q_K)$, first observe that the equation $\frac{\mathrm{d}}{\mathrm{d} q_K}f_K(q_K) = 0$ yields the unique solution $q_K^*= \big( 1-\frac{\log m}{\log n}\big)/|\mathcal{R}_K|$. This is indeed a minimum, since the second derivative
\begin{equation*}
\frac{\mathrm{d}^2}{\mathrm{d} (q_K)^2}f_K(q_K)  = \frac{|\mathcal{R}_K|^2}{(1-|\mathcal{R}_K| q_K)^2\log n} + \left(\! 1-\frac{\log m}{\log n}\right)\! \frac{1}{(q_{K})^2\log m}  >0
\end{equation*}
for any $q_K$, in particular, also for $q_K=q_K^*$. Hence,
\begin{equation}\label{eq:10}
f_K(q_K) \text{ strictly decreases for } q_K<q^*_K \text{ and } f_K(q_K) \text{ strictly increses for } q_K>q^*_K.
\end{equation}

Recall $A_K$ from~\eqref{eq:09}. The condition $N_K \leq A_K \leq N_{K+1}$ from Proposition~\ref{prop:opt} is equivalent to
$$\frac{N_K}{N_K |\mathcal{R}_K|+\norm{\mathcal{R}^C_K}} \leq \left( 1-\frac{\log m}{\log n}\right)\frac{1}{|\mathcal{R}_K|} \leq \frac{N_{K+1}}{N_{K+1} |\mathcal{R}_{K+1}|+\norm{\mathcal{R}^C_{K+1}}},$$
which can be seen  by rearranging and using again that $|\mathcal{R}_{K+1}|=|\mathcal{R}_{K}|+R_{K+1}$ and $\norm{\mathcal{R}^C_{K+1}} = \norm{\mathcal{R}^C_{K}}-R_{K+1}N_{K+1}$. In particular, this implies that the global minimum $q_K^*$ for $f_K$ satisfies the bounds in \eqref{range}. Substituting $q_K=\left( 1-\frac{\log m}{\log n}\right)/|\mathcal{R}_K|$ into~\eqref{k+1} we recover the measure $\mathbf{Q}_K$ defined in~\eqref{eq:04}. Therefore, if $N_K \leq A_K \leq N_{K+1}$ then
$$\min_{\substack{\mathbf{q}:\\  \textnormal{\eqref{eq:05}holds for $K$}}}\alpha(\mathbf{q})=\alpha(\mathbf{Q}_K).$$

Next, note that if $A_K< N_K$ then similarly to the above, we can deduce that the global minima of $f_K$ satisfies
$$q^*_K=\left( 1-\frac{\log m}{\log n}\right)\frac{1}{|\mathcal{R}_K|}<\frac{N_K}{N_K |\mathcal{R}_K|+\norm{\mathcal{R}^C_K}}.$$
In particular, the global minima for $f_K$ is not in the range determined by~\eqref{range}. Instead, \eqref{eq:10} implies that the minimum of $f_K$ subject to~\eqref{range} is obtained at $q_K=\frac{N_K}{N_K |\mathcal{R}_K|+\norm{\mathcal{R}^C_K}}$. Substituting $q_K=\frac{N_K}{N_K |\mathcal{R}_K|+\norm{\mathcal{R}^C_K}}$ into \eqref{k+1}, we recover the measure $\mathbf{q}_K$ defined in \eqref{eq:04}. Hence, if $A_K< N_K$ then
$$\min_{\substack{\mathbf{q}:\\  \textnormal{\eqref{eq:05}holds for $K$}}}\alpha(\mathbf{q})=\alpha(\mathbf{q}_K).$$

Finally, if $A>N_{K+1}$ then the global minima of $f_K$ satisfies
$$q_K^*=\left( 1-\frac{\log m}{\log n}\right)\frac{1}{|\mathcal{R}_K|} >\frac{N_{K+1}}{N_{K+1} |\mathcal{R}_{K+1}|+\norm{\mathcal{R}^C_{K+1}}}.$$
This time \eqref{eq:10} implies that $q_K=\frac{N_{K+1}}{N_{K+1} |\mathcal{R}_{K+1}|+\norm{\mathcal{R}^C_{K+1}}}$ is where the minimum of $f_K$ subject to \eqref{range} is attained. Substituting this $q_K$ into \eqref{k+1}, we recover the measure $\mathbf{q}_{K+1}$. Hence, if $A>N_{K+1}$ then
$$ \min_{\substack{\mathbf{q}:\\  \textnormal{\eqref{eq:05}holds for $K$}}}\alpha(\mathbf{q})=\alpha(\mathbf{q}_{K+1}).$$

To conclude, we deduce from Claim~\ref{claim:1}  that $\min_{\mathbf{p}} \dim_{\mathrm{M}}\nu_{\p} = \min\big\{ \alpha_1,\alpha_2,\ldots, \alpha_{M_0-1}\big\}$. In particular by Claim \ref{claim:1} item (2) we can deduce the form that the optimising vector $\mathbf{p}^*$ for \eqref{eq:01} takes.
This completes the proof of Theorem~\ref{thm:3} and Proposition~\ref{prop:opt}.

%%%%%%%%%%%%%%%%%%%%%%%%%%%%%%%%%%%%%%%%%%%%%%%%%%%%%%%%%%%%%%%%%%%%%%%%%%%%%%%%%%%%%%%%%%%%%%%%%%%%%%%%%%%%

\subsection{Special case with two different columns}\label{sec:twocols}

The input parameters of a Bedford--McMullen carpet with two different columns are: $n>m, N_1<N_2\leq n$ and $R_1+R_2\leq m$. Assume the indices of the maps $f_i$ defining the carpet are ordered such that the first $R_1N_1$ belong to columns with $N_1$ rectangles. By Proposition~\ref{prop:opt},
$$\min_{\mathbf{p}}\dim_{\mathrm{M}}\nu_{\p}= \alpha_1=\min\{\alpha(\mathbf{q}_1), \alpha(\mathbf{q}_2), \alpha(\mathbf{Q}_1)\},$$
therefore the optimising vector is either the uniform measure $\mathbf{p}_1=(1/N,\ldots,1/N)$, the coordinate uniform measure $\mathbf{p}_2=(1/(MN_{\psi(1)}),\ldots,1/(MN_{\psi(N)}))$ or $\mathbf{P}_1$ defined by
\begin{equation*}
P_{1,i}  = \left( 1-\frac{\log m}{\log n}\right)\frac{1}{R_1N_1}\text{ for all } i\leq R_1N_1 \;\;\text{ and }\;\; P_{1,i}  = \frac{\log m}{\log n}\frac{1}{R_2N_2}\text{ for all } i > R_1N_1.
\end{equation*}
In this case $A_1$ is given by
\begin{equation*}
A_1 = \left( \frac{\log n}{\log m}-1\right) \frac{R_2N_2}{R_1}.
\end{equation*}
Table~\ref{table:1} shows three examples where $\mathbf{p}_1,\, \mathbf{P}_1 $ and $\mathbf{p}_2$ are the optimising vectors, respectively. Note that in the last example, $\mathbf{q}_2$ is the optimizer regardless of the choice of $1\leq N_1<N_2\leq n$.

\begin{table}[H]
	\begin{center}
		\begin{tabular}{c|cccccccccc}
			\toprule
			& $R_1$ & $R_2$ & $N_1$ & $N_2$ & $m$ & $n$ & $\alpha(\mathbf{q}_1)$ & $\alpha(\mathbf{Q}_1)$ & $\alpha(\mathbf{q}_2)$ & $\dim_{\mathrm{M}}\Lambda$ \\ \midrule
	\rowcolor[HTML]{E8E8E8}
			$\phantom{N'_2<\,} A_1 < N_1$ & 1 & 1 & 2 & 3 & 2 & 3 & 1.95286 & -- & 2 & 1.83404 \\
			$N_1 \leq A_1 \leq N_2$ &1 & 1 & 1 & 2 & 2 & 3 & 1.58496 & 1.58089 & 1.63093 & 1.36907 \\
			\rowcolor[HTML]{E8E8E8}
			$N_2<A_1\phantom{>N'_2}$ & 1 & 1 & 2 & 3 & 2 & 5 & 1.75260 & -- & 1.68261 & 1.56932 \\
			\bottomrule
		\end{tabular}
	\end{center}
	\caption{Three examples in which each minimises $\dim_{\mathrm{M}}\nu_{\mathbf{p}}$ for a different vector.}
	\label{table:1}
\end{table}

%%%%%%%%%%%%%%%%%%%%%%%%%%%%%%%%%%%%%%%%%%%%%%%%%%%%%%%%%%%%%%%%%%%%%%%%%%%
\subsection{Simulations}\label{sec:simulations}

We demonstrate how to keep track of the orbit and how the choice of the measure $\mu$ driving the chaos game influences the ``quality" of the image on one of the examples presented in Table~\ref{table:1}.

For any $r>0$ consider a maximal $r$-packing of the attractor $\Lambda$, i.e. a collection of sets of diameter $r$ with disjoint interiors that cover $\Lambda$. The orbit $\mathcal{O}_n(\ii,x_0)$ becomes $r$-dense in $\Lambda$ once it has visited all elements of the $r$-packing. For Bedford--McMullen carpets we can keep track of the orbit using the collection of symbolic approximate squares of level $K$ introduced in~\eqref{eq:approxsquare}, where $K$ is chosen so that $m^{-K}\leq r < m^{-K+1}$. Recall, each approximate square $B_K(\ii)$ is identified with the sequence $(i_1,\ldots i_{L(K)};\phi(i_{L(K)+1}),\ldots,\phi(i_K))$, where $\phi(i)\in\{1,\ldots,M\}$ denotes the index of the column to which the rectangle $f_i([0,1]^2)$ belongs to. One step of the chaos game corresponds to the transition
\begin{equation*}
(i_1,\ldots i_{L(K)};\phi(i_{L(K)+1}),\ldots,\phi(i_K)) \longmapsto (j,i_1,\ldots i_{L(K)-1};\phi(i_{L(K)}),\ldots,\phi(i_{K-1}))
\end{equation*}
if $j$ was the next chosen index.

Table~\ref{table:2} shows the cover times in the middle example of Table~\ref{table:1} for various vectors at two different scales $K=6$ and $K=9$. The cover times are averaged out over 400 independent runs of the chaos game when $K=6$, and averaged out over 100 independent runs for $K=9$. The vectors $\p_1,\, \mathbf{P}_1 $ and $\mathbf{p}_2$ are the same as in Section~\ref{sec:twocols}, while  $\widehat{\p}$ corresponds to the McMullen measure which maximises the Hausdorff dimension of $\nu_{\mathbf{p}}$ defined as
\begin{equation*}
\widehat{p}_k= N_{\ih}^{\frac{\log m}{\log n}-1} \cdot \Big(\sum_{\jh=1}^M N_{\jh}^{\frac{\log m}{\log n}}\Big)^{-1} \;\text{ if } \phi(k)=\ih.
\end{equation*}
The pair $(\widetilde{\p},\widetilde{\q})$ corresponds to the uniform vectors on $\{1,\ldots,N\}$ and $\{1,\ldots,M\}$, respectively. In this case, we modified the chaos game to ``two dimensions": in a transition step a new map \emph{and} a new column are chosen \emph{independently} of each other according to $\widetilde{\p}$ and $\widetilde{\q}$. That is, if $k\in\{1,\ldots,N\}$ and $\ell\in\{1,\ldots,M\}$ are chosen uniformly and independently, then a transition step is
\begin{equation*}
(i_1,\ldots i_{L(K)};j_{L(K)+1},\ldots,j_K) \longmapsto (k,i_1,\ldots i_{L(K)-1};\ell,j_{L(K)+1},\ldots,j_{K-1}).
\end{equation*}
The choice $(\widetilde{\p},\widetilde{\q})$ is optimal in the sense that it minimises
\begin{equation*}
\alpha(\p,\q) := \max_{\substack{1\leq\, i\,\leq N \\ 1\leq\, \jh\, \leq M}} \left\{ \frac{\log p_i}{-\log n} + \left( 1-\frac{\log m}{\log n}\right) \frac{\log q_{\jh}}{-\log m} \right\}
\end{equation*}
with value $\alpha(\widetilde{\p},\widetilde{\q})=\dim_{\mathrm{M}}\Lambda$. Hence, is the most efficient possible. However, the drawback of this modified chaos game is that it does not have any clear geometric interpretation.

Indeed, Table~\ref{table:2} shows that the runtime is substantially faster with $(\widetilde{\p},\widetilde{\q})$ than with the other vectors. Table~\ref{table:1} suggests that the runtime with $\mathbf{P}_1$ should be second fastest since it corresponds to the smallest exponent $\alpha$, however, this is not supported by the empirical data in Table~\ref{table:2}. One explanation for this could be the fact that for small values of $K$ the measure of $B_K(\ii^*)$, which denotes the level $K$ approximate square of minimum measure, does not yet reflect the asymptotic behaviour inferred from the exponent $\alpha$.
%that at such a low level the approximate squares with smallest measure do not reflect the difference in the exponents yet.
For example, by direct computation we obtain that at level $K=6$
\begin{equation*}
0.00137=\mu_{\p_1}(B_6(\ii^*)) < \mu_{\mathbf{P}_1}(B_6(\ii^*)) < \mu_{\widehat{\p}}(B_6(\ii^*)) < \mu_{\p_2}(B_6(\ii^*))=0.00195,
\end{equation*}
whereas for level $K=100$
\begin{equation*}
8.55\times10^{-50}=\mu_{\p_2}(B_K(\ii^*)) < \mu_{\p_1}(B_K(\ii^*)) < \mu_{\widehat{\p}}(B_K(\ii^*)) < \mu_{\mathbf{P}_1}(B_K(\ii^*)) = 2.61\times10^{-48}.
\end{equation*}
Another reason could be that the impact of the subexponential error terms on the runtime is amplified for small values of $K$. For larger $K$, we expect the runtime would follow the order of the exponents but, since the number of approximate squares grows exponentially in $K$, it is computationally impossible to do simulations for much larger $K$.

\begin{table}[h]
	\begin{center}
		\begin{tabular}{l|ccccccrrrrr}
			\toprule
			& $R_1$ & $R_2$ & $N_1$ & $N_2$ & $m$ & $n$ & $\mathbf{p}_1$ & $\mathbf{P}_1$ & $\mathbf{p}_2$ & $\widehat{\p}$ & $(\widetilde{\p},\widetilde{\q})$ \\ \midrule
			\rowcolor[HTML]{E8E8E8}
			$K=6$ & 1 & 1 & 1 & 2 & 2 & 3 & 2787 & 2202 & 2442 & 2060 & 1288 \\
			$K=9$ &1 & 1 & 1 & 2 & 2 & 3 & 118057 & 86445 & 112666 & 78910 & 33855 \\
			\bottomrule
		\end{tabular}
	\end{center}
	\caption{The runtime of the chaos game on the same example with different vectors until it visits all approximate squares at level $K$.}
	\label{table:2}
\end{table}

In Figure~\ref{fig:BMCarpetSimu} we plotted the orbit starting from $(1,1)\in\Lambda$ using three different vectors; from left to right $\mathbf{P}_1,\p_1$ and $(0.6,0.25,0.15)$. The orbits were terminated once the game with $\mathbf{P}_1$ visited all approximate squares at level $K=7$. It is difficult to see any difference between the first two figures with the naked eye. However, in the third case where the vector was chosen deliberately to be very different from the optimal, it is apparent that it hasn't visited many approximate squares in the same number of steps.

\begin{figure}[h]
	\centering
	\includegraphics[width=0.97\textwidth]{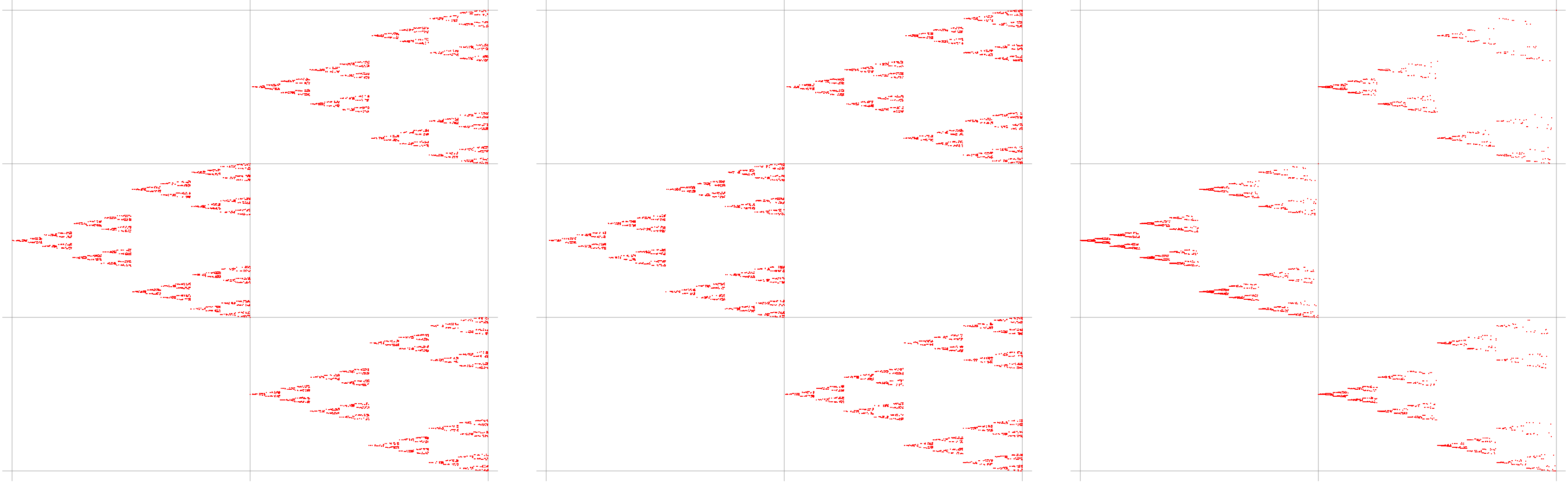}
	\caption{Plots of orbits using three different vectors (from left to right $\mathbf{P}_1,\p_1$ and $(0.6,0.25,0.15)$) terminated when the game with $\mathbf{P}_1$ visited all level-7 approximate squares.}
	\label{fig:BMCarpetSimu}
\end{figure}

\subsection{Non-unique optimiser} \label{sec:nonuniqueopt}

Let us again consider a Bedford--McMullen carpet with two different columns, where $m=2$, $n=4$, $R_1=R_2=1$ and $N_1=2$, $N_2=4$. Assume the indices of the maps $f_i$ defining the carpet are ordered such that the first $2$ belong to the column that contains 2 rectangles. We know that $\min_{\mathbf{p}}\dim_{\mathrm{M}}\nu_{\p}=\alpha_1=\alpha(\mathbf{Q}_1)$ since $A_1=4=N_2$. Moreover $\mathbf{Q}_1=\q_2=\big(\frac{1}{2}, \frac{1}{2}\big)$, meaning that the co-ordinate uniform vector
$$\mathbf{P}_1=\Big(\frac{1}{4}, \frac{1}{4}, \frac{1}{8},\frac{1}{8},\frac{1}{8},\frac{1}{8}\Big)$$
is an optimiser for \eqref{eq:01}.

Now, for $0 \leq\varepsilon\leq \frac{1}{8}$ define the perturbed vector
$$\mathbf{P}^\varepsilon_1=\Big(\frac{1}{4}-\varepsilon, \frac{1}{4}+\varepsilon, \frac{1}{8},\frac{1}{8},\frac{1}{8},\frac{1}{8}\Big).$$
From \eqref{eq:02} one can check that $\dim_{\mathrm{M}}\nu_{\mathbf{P}_1^\varepsilon}$ is actually independent of $\varepsilon$, thus each $\mathbf{P}_1^\varepsilon$ is also an optimising vector for $0 \leq\varepsilon\leq \frac{1}{8}$. In particular, the optimising vector for \eqref{eq:01} is not unique.

%%%%%%%%%%%%%%%%%%%%%%%%%%%%%%%%%%%%%%%%%%%%%%%%%%%%%%%%%%%%%%%%%%%%%%%%%%%%%%%%
\subsection{Special case with three different columns}

The input parameters of a Bedford--McMullen carpet with three different columns are: $n>m, N_1<N_2<N_3\leq n$ and $R_1+R_2+R_3\leq m$. The parameter space, shown in Figure~\ref{fig:parameterspace}, consists of vectors $\mathbf{q}=(q_1,q_2,q_3)$ such that $R_1q_1+R_2q_2+R_3q_3=1$. It may be that $N_2/N>1/M$, however, the line connecting $\mathbf{q}_1$ to $(1/R_1,0)$ always intersects the line $q_1=q_2$ (defining $\mathbf{q}_2$) and never the line connecting $\mathbf{q}_3$ with $(0,1/R_2)$.

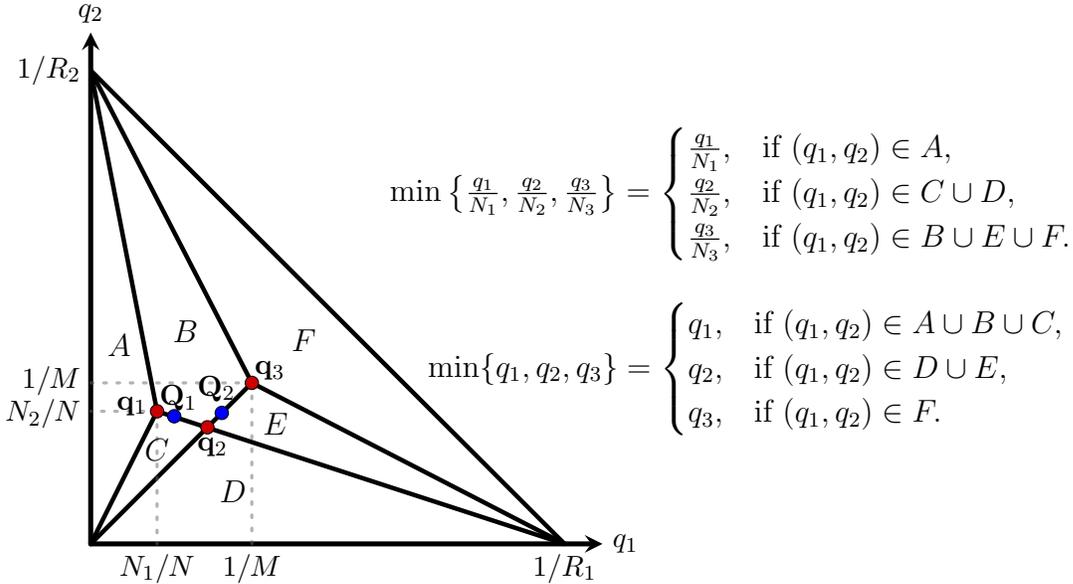
\begin{figure}[h]
\begin{tikzpicture}[scale=0.63,line cap=round,line join=round,>=stealth,x=1cm,y=1cm]
\clip(-2,-1) rectangle (25,12);
%lines
\draw [->,line width=1.9pt] (0,0) -- (10.8,0);
\draw [->,line width=1.9pt] (0,0) -- (0,10.8);
\draw [line width=1.7pt] (0,10)-- (10,0);
\draw [line width=1.7pt] (3.4,3.4)-- (0,10);
\draw [line width=1.7pt] (3.4,3.4)-- (10,0);
\draw [line width=1.7pt] (3.4,3.4)-- (0,0);
\draw [line width=1.2pt,dash pattern=on 1pt off 4pt,color=wwwwww, opacity=0.5] (3.4,3.4)-- (3.4,0);
\draw [line width=1.2pt,dash pattern=on 1pt off 4pt,color=wwwwww, opacity=0.5] (3.4,3.4)-- (0,3.4);
\draw [line width=1.7pt] (1.4,2.8)-- (10,0);
\draw [line width=1.7pt] (1.4,2.8)-- (0,0);
\draw [line width=1.7pt] (1.4,2.8)-- (0,10);
\draw [line width=1.2pt,dash pattern=on 1pt off 4pt,color=wwwwww, opacity=0.5] (1.4,2.8)-- (1.4,0);
\draw [line width=1.2pt,dash pattern=on 1pt off 4pt,color=wwwwww, opacity=0.5] (1.4,2.8)-- (0,2.8);
%nodes
\draw (10,0) node[anchor=north] {$1/R_1$};
\draw (3.4,0) node[anchor=north] {$1/M$};
\draw (1.4,0) node[anchor=north] {$N_1/N$};
\draw (0,2.75) node[anchor=east] {$N_2/N$};
\draw (0,3.45) node[anchor=east] {$1/M$};
\draw (0,10) node[anchor=east] {$1/R_2$};

\draw (0.6,4.2) node {\large{$A$}};
\draw (2,4.5) node {\large{$B$}};
\draw (1.4,2) node {\large{$C$}};
\draw (3,1.1) node {\large{$D$}};
\draw (3.9,2.53) node {\large{$E$}};
\draw (4.5,4.3) node {\large{$F$}};

\draw (10.8,0) node[anchor=west] {$q_1$};
\draw (0,10.8) node[anchor=south] {$q_2$};

\begin{scriptsize}
\draw [fill=ccqqqq] (3.4,3.4) circle (4pt);
\draw [fill=ccqqqq] (1.4,2.8) circle (4pt);
\draw [fill=ccqqqq] (2.462,2.462) circle (4pt);
\draw [fill=qqqqff] (1.762,2.6952) circle (4pt);
\draw [fill=qqqqff] (2.766,2.766) circle (4pt);
\end{scriptsize}
\draw (1.4,2.5) node[anchor=south east] {$\mathbf{q}_1$};
\draw (2.57,2.45) node[anchor=north] {$\mathbf{q}_2$};
\draw (3.25,3.25) node[anchor=south west] {$\mathbf{q}_3$};
\draw (1.85,2.6) node[anchor=south] {$\mathbf{Q}_1$};
\draw (2.65,2.8) node[anchor=south] {$\mathbf{Q}_2$};
%writing
\draw (6.9,3.7) node[anchor=west] {\large{$\min\{q_1,q_2,q_3\} = \begin{cases}q_1, & \text{if } (q_1,q_2)\in A\cup B\cup C, \\ q_2, & \text{if } (q_1,q_2)\in D\cup E, \\ q_3, & \text{if } (q_1,q_2)\in F. \end{cases}$}};
\draw (6.1,7.4) node[anchor=west] {\large{$\min\big\{\frac{q_1}{N_1},\frac{q_2}{N_2},\frac{q_3}{N_3}\big\} = \begin{cases} \frac{q_1}{N_1}, & \text{if } (q_1,q_2)\in A, \\ \frac{q_2}{N_2}, & \text{if } (q_1,q_2)\in C\cup D, \\ \frac{q_3}{N_3}, & \text{if } (q_1,q_2)\in B\cup E\cup F. \end{cases}$}};

\end{tikzpicture}
\caption{The parameter space of vectors $\mathbf{q}=(q_1,q_2,(1-R_1q_1-R_2q_2)/R_3)$ shown with the six regions where $\alpha(\mathbf{q})$ takes different values.}
\label{fig:parameterspace}
\end{figure}

The function $\alpha(\mathbf{q})$ takes different values according to which region the vector $\mathbf{q}$ falls into. The candidates that minimise $\alpha(\mathbf{q})$ are the vectors $\mathbf{q}_1,\mathbf{q}_2,\mathbf{q}_3,\mathbf{Q}_1$ and $\mathbf{Q}_2$. $\mathbf{Q}_1$ is a valid candidate if and only if it lies on the line between $\mathbf{q}_1$ and $\mathbf{q}_2$. Likewise for $\mathbf{Q}_2$ between $\mathbf{q}_2$ and $\mathbf{q}_3$. We leave it to the reader to find five different examples, where each one minimizes $\alpha(\mathbf{q})$ for a different candidate. We note that the McMullen measure which maximizes the Hausdorff dimension of $\nu_{\mathbf{p}}$ lies somewhere in the interior of region $B$.

\section{Open questions}\label{sec:6}

Here we suggest some possible directions for future investigation appearing naturally from the results of the presented paper.

\begin{ques} Can Theorem \ref{thm:2} be extended beyond the class of measures that satisfy the one-sided exponential decay of correlations property? For example, to measures which satisfy a weaker mixing property, or even to all ergodic measures?
\end{ques}

\begin{ques}
	In Proposition~\ref{prop:lower}, is it possible to omit the assumption that $\dim_{\mathrm{L}} \Lambda>0$ if the IFS is overlapping? In point of view of Proposition~\ref{prop:lower2} is it possible to relax the conditions even further?
\end{ques}

\begin{ques} Is it possible to relax the hyperbolicity condition in Theorem \ref{thm:2}? That is, is it possible to relax the statement to allow for parabolic iterated function systems $\mathcal{F}$, where  $\mathcal{F}$ is no longer uniformly contracting, but it contains a map with a neutral fixed point?
\end{ques}

\begin{ques} At least in special cases, as was shown in~\cite{JurgaMorris_ChaosGame_2020arxiv} for self-similar sets, is it possible to obtain tighter bounds on the expected value of the cover time in Theorem \ref{thm:expectedvalue}?
\end{ques}

\begin{ques}
Let us consider a simple iterated function system of similarities with the strong separation condition, and let us consider the natural self-similar measure $\mu$ defined by the similarity dimension, and fix an $r>0$. Is it possible to give tight bounds on $n$ for which $\mu(\{\ii:d_{\mathrm H} \big(\mathcal{O}_n(\ii,x_0),\Lambda\big)<r\})\geq0.95$? Is it possible in the case of the weak separation condition? Or even without any separation condition?
\end{ques}

\begin{ques}
In case of Bernoulli convolutions, recall~\eqref{eq:BernoulliConv}, all self-similar measures have Minkowski dimension strictly larger than 1. Can it be achieved if the probability vector $\mathbf{p}$ is allowed to dependent on the current position of the orbit? More precisely, let $p:\Lambda\to[0,1]$. Assuming some regularity on $p$, see~\cite{FanLau_JMAA}, there exists a unique measure $\nu$ satisfying
\begin{equation*}
\nu(B) = \int_{(B+1)/\lambda} p(x) \mathrm{d}\nu(x) + \int_{(B-1)/\lambda} (1-p(x)) \mathrm{d}\nu(x)
\end{equation*}
for any Borel set $B$. Can $\dim_{\mathrm{M}}\nu=1$ be achieved with an appropriate choice of $p$?
\end{ques}

\begin{ques} Recall that the Minkowski dimensions of Bernoulli measures supported on a Bedford-McMullen carpet $\Lambda$ obtain a maximum at $\dim_{\mathrm{M}} \Lambda$ if and only if $\Lambda$ has uniform vertical fibres (Claim \ref{claim:2}). Therefore it is interesting to ask whether, in the non-uniform vertical fibres case, there exists an invariant measure supported on $\Lambda$ whose Minkowski dimension equals  $\dim_{\mathrm{M}} \Lambda$ (thus which optimises the cover time for the chaos game)? Note that although it is known that $\dim_{\mathrm{M}} \Lambda$ is always achieved by the Minkowski dimension of some measure supported on $\Lambda$, this measure is not necessarily invariant \cite{FFF_MinkowskiDimMeas_2020arxiv}.
\end{ques}

\bibliographystyle{abbrv}
\bibliography{biblio_chaosgame}

@article{aldous-graph,
	author={Aldous, David~J.},
	title={An introduction to covering problems for random walks on graphs},
	journal={J. Theoret. Probab.},
	volume={2},
	date={1989},
	number={1},
	pages={87--89}
}

@article{aldous,
	author={{Aldous}, David~J.},
	title={{Covering a compact space by fixed-radius or growing random balls}},
	date={2021},
	journal={arXiv e-prints},
	pages={arXiv:2101.12671},
	eprint={2101.12671},
}

@article{haydn,
	author={Haydn, N. T. A.},
	title={Entry and return times distribution},
	journal={Dyn. Syst.},
	volume={28},
	date={2013},
	number={3},
	pages={333--353},
}

@book{peres,
	author={Levin, David~A. and Peres, Yuval},
	title={Markov chains and mixing times},
	publisher={American Mathematical Society, Providence, RI},
	year={2009},
}

@book{mike-book,
	author={Lucarini, Valerio and Faranda, Davide and de Freitas, Ana Cristina Gomes Monteiro Moreira and de Freitas, Jorge Miguel Milhazes and Holland, Mark and Kuna, Tobias and Nicol, Matthew and Todd, Mike and Vaienti, Sandro},
	title={Extremes and recurrence in dynamical systems},
	series={Pure and Applied Mathematics (Hoboken)},
	publisher={John Wiley \& Sons, Inc., Hoboken, NJ},
	year={2016},
}

@article{FanLau_JMAA,
	author={Fan, Ai Hua and Lau, Ka-Sing},
	title={Iterated {F}unction {S}ystem and {R}uelle {O}perator},
	journal={J. Math. Anal. Appl.},
	volume={231},
	date={1999},
	pages={319--344},
}

@article{FFF_MinkowskiDimMeas_2020arxiv,
       author = {{Falconer}, Kenneth J. and {Fraser}, Jonathan M. and {K{\"a}enm{\"a}ki}, Antti},
title = "{Minkowski dimension for measures}",
journal = {arXiv e-prints},
keywords = {Mathematics - Classical Analysis and ODEs, Mathematics - Metric Geometry},
year = 2020,
eid = {arXiv:2001.07055},
pages = {arXiv:2001.07055},
archivePrefix = {arXiv},
eprint = {2001.07055},
primaryClass = {math.CA},
adsurl = {https://ui.adsabs.harvard.edu/abs/2020arXiv200107055F},
adsnote = {Provided by the SAO/NASA Astrophysics Data System}
}

@article{JurgaMorris_ChaosGame_2020arxiv,
       author = {{Jurga}, Natalia and {Morris}, Ian D.},
title = "{How long is the Chaos Game?}",
journal = {to appear in Bull. Lond. Math. Soc.},
year = 2021,
eid = {arXiv:2007.11517},
pages = {arXiv:2007.11517},
archivePrefix = {arXiv},
eprint = {2007.11517},
primaryClass = {math.DS},
adsurl = {https://ui.adsabs.harvard.edu/abs/2020arXiv200711517J}
}

@book{fraser_2020Book, 
	place={Cambridge}, 
	series={Cambridge Tracts in Mathematics}, 
	title={Assouad Dimension and Fractal Geometry}, 
	DOI={10.1017/9781108778459}, 
	publisher={Cambridge University Press}, 
	author={Fraser, Jonathan M.}, 
	year={2020}, 
	collection={Cambridge Tracts in Mathematics}
}

@book{BarnsleyBook,
	author = {Barnsley, Michael},
	title = {Fractals Everywhere},
	year = {1988},
	isbn = {0120790629},
	publisher = {Academic Press Professional, Inc.},
	address = {USA}
}

@article{GutierrezEtal_Fractals96,
	author = {Guti{é}rrez, J. and Iglesias, A. and Rodriguez, M.A.},
	title = {A multifractal analysis of {IFSP} invariant measures with application to fractal image generation},
	journal = {Fractals},
	volume = {04},
	number = {01},
	pages = {17-27},
	year = {1996},
	doi = {10.1142/S0218348X96000042},
}

@incollection{Fraser_BMcarpetSurvey_20arxiv,
	author = {{Fraser}, Jonathan M.},
	booktitle = {Thermodynamic Formalism},
	editor = {},
	title = "{Fractal geometry of {B}edford-{M}c{M}ullen carpets}",
	year = {2021},
	pages = {495--516, M. Pollicott and S. Vaienti editors},
	publisher = {Springer Lecture Notes in Mathematics 2290}
}

@ARTICLE{FraserHowroyd_AnnAcadSciFennMath17,
	author = "Jonathan M. Fraser and Douglas Howroyd",
	title = "Assouad type dimensions for self-affine sponges",
	journal = "Ann. Acad. Sci. Fenn. Math.",
	fjournal = "Annales Academi\ae{} Scientiarum Fennic\ae{}",
	volume = 42,
	year = 2017,
	pages = "149--174",
}

@article{King_LocalDimBMCarpet_95AdvMath,
	title = "The Singularity Spectrum for General {S}ierpi{\'n}ski Carpets",
	journal = "Adv. Math.",
	volume = "116",
	number = "1",
	pages = "1--11",
	year = "1995",
	issn = "0001-8708",
	doi = "https://doi.org/10.1006/aima.1995.1061",
	url = "http://www.sciencedirect.com/science/article/pii/S0001870885710614",
	author = "J.F. King"
}

@article{JordanRams_MultifractaBMCarpet_2011,
	title={Multifractal analysis for {B}edford–{M}c{M}ullen carpets},
	author={Jordan, Thomas and Rams, Michal},
	volume={150},
	number={1},
	journal={Math. Proc. Cambridge Philos. Soc.},
	publisher={Cambridge University Press},
	year={2011},
	pages={147--156},
	DOI={10.1017/S0305004110000472},
}

@article{Werner_2004,
	doi = {10.1088/0951-7715/17/6/016},
	url = {https://doi.org/10.1088/0951-7715/17/6/016},
	year = 2004,
	publisher = {{IOP} Publishing},
	volume = {17},
	number = {6},
	pages = {2303--2313},
	author = {Ivan Werner},
	title = {Ergodic theorem for contractive {M}arkov systems},
	journal = {Nonlinearity},
}

@article{forte_mendivil_1998,
	title={A classical ergodic property for {IFS}: a simple proof},
	volume={18},
	DOI={10.1017/S0143385798108271},
	number={3},
	journal={Ergodic Theory Dynam. Systems},
	publisher={Cambridge University Press},
	author={Forte, B. and Mendivil, F.},
	year={1998},
	pages={609–611}
}

@article{elton_1987,
	title={An ergodic theorem for iterated maps},
	volume={7},
	DOI={10.1017/S0143385700004168},
	number={4},
	journal={Ergodic Theory Dynam. Systems},
	publisher={Cambridge University Press},
	author={Elton, John H.},
	year={1987},
	pages={481–488}
}

@article{barnsley_vince_2011,
	title={The chaos game on a general iterated function system},
	volume={31},
	DOI={10.1017/S0143385710000428},
	number={4},
	journal={Ergodic Theory Dynam. Systems},
	publisher={Cambridge University Press},
	author={Barnsley, MICHAEL F. and Vince, ANDREW},
	year={2011},
	pages={1073--1079}
}

@article{BarnsleyLesniak_2014,
	author = {Barnsley, Michael F. and Le{\'s}niak, Krzysztof},
	title = {The Chaos Game on a General Iterated Function System from a Topological Point of View},
	journal = {Internat. J. Bifur. Chaos Appl. Sci. Engrg.},
	volume = {24},
	number = {11},
	pages = {1450139},
	year = {2014},
	doi = {10.1142/S0218127414501399}
}

@article{BARNSLEYetal_JMAA2016,
	title = "Chaos game for {IFS}s on topological spaces",
	journal = "J. Math. Anal. Appl.",
	volume = "435",
	number = "2",
	pages = "1458--1466",
	year = "2016",
	issn = "0022-247X",
	doi = "https://doi.org/10.1016/j.jmaa.2015.11.022",
	url = "http://www.sciencedirect.com/science/article/pii/S0022247X15010598",
	author = "Michael F. Barnsley and Krzysztof Leśniak and Miroslav Rypka"
}

@article{Lesniak_chaos2015,
	author = {Le{\'s}niak,Krzysztof },
	title = {Random iteration for infinite nonexpansive iterated function systems},
	journal = {Chaos},
	volume = {25},
	number = {8},
	pages = {083117},
	year = {2015},
	doi = {10.1063/1.4929387}
}

@InProceedings{Kessebohmer_Quantization,
	author="Kesseb{\"o}hmer, Marc
	and Zhu, Sanguo",
	editor="Bandt, Christoph
	and Falconer, Kenneth
	and Z{\"a}hle, Martina",
	title="Some Recent Developments in Quantization of Fractal Measures",
	booktitle="Fractal {G}eometry and {S}tochastics {V}",
	year="2015",
	publisher="Springer International Publishing",
	address="Cham",
	pages="105--120",
	abstract="We give an overview on the quantization problem for fractal measures, including some related results and methods which have been developed in the last decades. Based on the work of Graf and Luschgy, we propose a three-step procedure to estimate the quantization errors. We survey some recent progress, which makes use of this procedure, including the quantization for self-affine measures, Markov-type measures on graph-directed fractals, and product measures on multiscale Moran sets. Several open problems are mentioned.",
	isbn="978-3-319-18660-3"
}

@article{XieEtal_ProcAMS2003,
	author = {Feng Xie and Yongcheng Yin and Yeshun Sun},
	journal = {Proc. Amer. Math. Soc.},
	number = {10},
	pages = {3053--3057},
	publisher = {American Mathematical Society},
	title = {Uniform Perfectness of Self-Affine Sets},
	volume = {131},
	year = {2003}
}

@ARTICLE{KaenmakiEtal_UniformlyPerfect13,
	author = "Antti K{\"a}enm{\"a}ki and Juha Lehrback and Matti Vuorinen",
	title = "Dimensions, {W}hitney covers, and tubular neighborhoods",
	journal = "Indiana Univ. Math. J.",
	fjournal = "Indiana University Mathematics Journal",
	volume = 62,
	year = 2013,
	issue = 6,
	pages = "1861--1889",
	issn = "0022-2518",
	coden = "IUMJAB",
	mrclass = "Primary 28A75; 54E35; Secondary 54F45; 28A80;",
}

@article{falconer,
	author={Falconer, K. J.},
	title={The {H}ausdorff dimension of self-affine fractals},
	journal={Math. Proc. Cambridge Philos. Soc.},
	volume={103},
	date={1988},
	number={2},
	pages={339--350},
}

@article{kaenmaki,
	author={K\"{a}enm\"{a}ki, Antti},
	title={Measures of full dimension on self-affine sets},
	note={32nd Winter School on Abstract Analysis},
	journal={Acta Univ. Carolin. Math. Phys.},
	volume={45},
	date={2004},
	number={2},
	pages={45--53},
}

@article{piraino,
	author={Piraino, Mark},
	title={The weak Bernoulli property for matrix Gibbs states},
	journal={Ergodic Theory Dynam. Systems},
	volume={40},
	date={2020},
	number={8},
	pages={2219--2238},
}

@article{bhr,
	author={B\'{a}r\'{a}ny, Bal\'{a}zs and Hochman, Michael and Rapaport, Ariel},
	title={Hausdorff dimension of planar self-affine sets and measures},
	journal={Invent. Math.},
	volume={216},
	date={2019},
	number={3},
	pages={601--659},
	issn={0020-9910},
	doi={10.1007/s00222-018-00849-y},
}

@article{Fraser_TAMS2014,
	author = {JONATHAN M. Fraser},
	journal = {Trans. Amer. Math. Soc.},
	number = {12},
	pages = {6687--6733},
	publisher = {American Mathematical Society},
	title = {Assouad type dimensions and homogeneity of fractals},
	volume = {366},
	year = {2014}
}

@article{matthews,
	author={Matthews, Peter},
	title={Covering problems for Brownian motion on spheres},
	journal={Ann. Probab.},
	volume={16},
	date={1988},
	number={1},
	pages={189--199},
}

@article{penrose,
	author={{Penrose}, Mathew~D.},
	title={{Random Euclidean coverage from within}},
	date={2021},
	journal={arXiv e-prints},
	pages={arXiv:2101.06306},
	eprint={2101.06306},
}

@book {bowen,
	AUTHOR = {Bowen, Rufus},
	TITLE = {Equilibrium states and the ergodic theory of {A}nosov
	diffeomorphisms},
	SERIES = {Lecture Notes in Mathematics},
	VOLUME = {470},
	EDITION = {revised},
	NOTE = {With a preface by David Ruelle,
	Edited by Jean-Ren\'{e} Chazottes},
	PUBLISHER = {Springer-Verlag, Berlin},
	YEAR = {2008},
	PAGES = {viii+75},
	ISBN = {978-3-540-77605-5},
	MRCLASS = {37C40 (28D05 37A25 37D20)},
}

@InProceedings{PSS_60YearsBernoulli,
	author="Peres, Yuval
	and Schlag, Wilhelm
	and Solomyak, Boris",
	editor="Bandt, Christoph
	and Graf, Siegfried
	and Z{\"a}hle, Martina",
	title="Sixty {Y}ears of {B}ernoulli {C}onvolutions",
	booktitle="Fractal {G}eometry and {S}tochastics {II}",
	year="2000",
	publisher="Birkh{\"a}user Basel",
	address="Basel",
	pages="39--65"
}

@article{Shmerkin_Annals19,
	URL = {https://www.jstor.org/stable/10.4007/annals.2019.189.2.1},
	author = {Pablo Shmerkin},
	journal = {Annals of Mathematics},
	number = {2},
	pages = {319--391},
	title = {On {F}urstenberg's intersection conjecture, self-similar measures, and the {L}q norms of convolutions},
	volume = {189},
	year = {2019}
}

@article{Varju_JAMS19,
	author = {Varj\'u, P\'eter},
	journal = {J. Amer. Math. Soc.},
	pages = {351--397},
	title = {Absolute continuity of {B}ernoulli convolutions for algebraic parameters},
	volume = {32},
	year = {2019}
}

@article{falconer_1988,
	title={The {H}ausdorff dimension of self-affine fractals},
	volume={103},
	DOI={10.1017/S0305004100064926},
	number={2},
	journal={Mathematical Proceedings of the Cambridge Philosophical Society}, publisher={Cambridge University Press},
	author={Falconer, K. J.},
	year={1988},
	pages={339--350}
}

@phdthesis{Bedford84_phd,
	author		= "Tim Bedford",
	title		= "Crinkly curves, Markov partitions and box dimensions in self-similar sets",
	school		= "University of Warwick",
	year		= "1984"
}

@article{mcmullen84,
	author = "McMullen, Curt",
	fjournal = "Nagoya Math. J.",
	journal = "Nagoya Math. J.",
	pages = "1--9",
	title = "The {H}ausdorff dimension of general {S}ierpi{\'n}ski carpets",
	url = "https://projecteuclid.org:443/euclid.nmj/1118787639",
	volume = "96",
	year = "1984"
}

@article{Hochman_Annals14,
	title={On self-similar sets with overlaps and inverse theorems for entropy},
	volume={180},
	number={2},
	journal={Annals of Mathematics},
	author={Hochman, Michael},
	year={2014},
	pages={773--822}
}

@ARTICLE{HochmanIndDim,
	author = {{Hochman}, M.},
	title = "{On self-similar sets with overlaps and inverse theorems for entropy in $\mathbb{R}^d$}",
	journal = {ArXiv e-prints},
	archivePrefix = "arXiv",
	eprint = {1503.09043},
	volume = {1503.09043},
	year = 2015,
	month = mar
}

@book{Falconer3-1997,
	address = {England},
	author = {Falconer, K. J.},
	priority = {2},
	publisher = {John Wiley},
	title = {{Techniques in Fractal Geometry}},
	year = {1997}
}

@article {SSU2,
    AUTHOR = {Simon, K. and Solomyak, B. and Urba\'{n}ski, M.},
     TITLE = {Hausdorff dimension of limit sets for parabolic {IFS} with
              overlaps},
   JOURNAL = {Pacific J. Math.},
  FJOURNAL = {Pacific Journal of Mathematics},
    VOLUME = {201},
      YEAR = {2001},
    NUMBER = {2},
     PAGES = {441--478},
}

@article {JPS,
    AUTHOR = {Jordan, Thomas and Pollicott, Mark and Simon, K\'{a}roly},
     TITLE = {Hausdorff dimension for randomly perturbed self affine
              attractors},
   JOURNAL = {Comm. Math. Phys.},
  FJOURNAL = {Communications in Mathematical Physics},
    VOLUME = {270},
      YEAR = {2007},
    NUMBER = {2},
     PAGES = {519--544},
}

@article {KaeMor,
    AUTHOR = {K\"{a}enm\"{a}ki, Antti and Morris, Ian D.},
     TITLE = {Structure of equilibrium states on self-affine sets and strict
              monotonicity of affinity dimension},
   JOURNAL = {Proc. Lond. Math. Soc. (3)},
  FJOURNAL = {Proceedings of the London Mathematical Society. Third Series},
    VOLUME = {116},
      YEAR = {2018},
    NUMBER = {4},
     PAGES = {929--956},
      ISSN = {0024-6115},
   MRCLASS = {37D35 (28A80)},
  MRNUMBER = {3789836},
MRREVIEWER = {Jonathan MacDonald Fraser},
       DOI = {10.1112/plms.12089},
       URL = {https://doi.org/10.1112/plms.12089},
}

\end{document}